\documentclass[11pt,reqno]{amsart}
\usepackage{type1ec}
\usepackage[utf8]{inputenc}
\usepackage[T1]{fontenc}
\usepackage{comment}
\usepackage{enumitem}
\usepackage{amssymb,amsfonts,amsthm}

\usepackage{enumitem}
\usepackage{amsthm}
\usepackage{amssymb}
\usepackage[french,english]{babel}
\usepackage{enumerate}
\usepackage{setspace}

\usepackage{hyperref}

\setlength{\textwidth}{16cm}
\setlength{\textheight}{23cm}
\setlength{\topmargin}{-1cm}
\setlength{\oddsidemargin}{-1mm}
\setlength{\evensidemargin}{-1mm}
\raggedbottom


\usepackage{color}
\definecolor{Red}{cmyk}{0,1,1,0.2}

\usepackage{amsfonts,amsmath,layout}




\newcommand{\Q}{\mathbb Q}

\newcommand{\R}{\mathbb R}


\newcommand{\W}{\mathbb{W}}

\def\R{\mathbb R}
\def\N{\mathbb N}

\def\E{\mathbb E}
\def\P{\mathbb P}
\def\B{\mathbb{B}}

\def\J{{\mathcal J}}
\def\ind{\mathbf{1}_}
\def\e{\varepsilon}
\newcommand{\be}{\begin{equation}}
\newcommand{\ee}{\end{equation}}

\def\1{{\bf 1}}

\def\sgn{{\rm sgn}}

\def\ds{\displaystyle}


\newcommand{\pare}[1]{\left (#1\right )}
\newcommand{\croc}[1]{\left [#1\right ]}


\newtheorem{Theorem}{Theorem}[section]
\newtheorem{Definition}[Theorem]{Definition}
\newtheorem{Proposition}[Theorem]{Proposition}
\newtheorem{Lemma}[Theorem]{Lemma}
\newtheorem{Corollary}[Theorem]{Corollary}
\newtheorem{Remark}[Theorem]{Remark}

\newtheorem{Sketch of proof}[Theorem]{Sketch of proof}

\begin{document}
\title[Walsh spider process with spinning measure selected from its own local time]{Martingale problem for a Walsh Spider process with spinning measure selected from its own local time}
\author[Miguel Martinez \& Isaac Ohavi]{Miguel Martinez$^{\dagger}$, Isaac Ohavi$^{\star}$\\
{$^\dagger$ Universit\'e Gustave Eiffel, LAMA UMR 8050, France}\\{$^\star$ Hebrew University of
Jerusalem, Department of Statistics, Israël}}
\email{miguel.martinez@univ-eiffel.fr}
\email{isaac.ohavi@mail.huji.ac.il \& isaac.ohavi@gmail.com}
\thanks{Research partly funded by the Bézout Labex, funded by ANR, reference ANR-10-LABX-58\\Research partially supported by the GIF grant 1489-304.6/2019}
\dedicatory{Version: \today}
\maketitle

\begin{abstract}
The objective of this article is to prove existence and weak uniqueness of a Walsh spider diffusion process, whose spinning measure and coefficients are allowed to depend on the local time spent at the junction vertex. The methodology is to show carefully that an effectively designed martingale problem is well-posed. 
Exploiting fully the results coming from the pioneering work of \cite{Freidlin-Wentzell-2}, the construction of the solution is performed using a concatenation procedure, as introduced in the seminal reference \cite{Stroock}. Uniqueness is shown by making use of the recent results obtained in \cite{Martinez-Ohavi EDP} for the solution of the corresponding parabolic PDE that involves a new class of transmission condition called {\it local time Kirchhoff's transmission condition}. As a byproduct of our main result, we manage to compute the explicit law of the diffusion when it behaves as a standard Brownian motion on each branch. The case $I=2$ permits us to derive also that there is existence and uniqueness for solutions of generalized SDE on the real line that involve the local time of the unknown process in all its coefficients. 
\end{abstract}

\section{Introduction}
\subsection{Introduction}
\label{sec:intro}
This article should be regarded as the first part of a two papers series concerning Walsh spider processes whose spinning measure and coefficients are allowed to depend on the local time at the junction vertex. In this first contribution, we construct the process and prove that there is uniqueness in the weak sense of its law on the joint spider's path space-local time space. Because this contribution is already quite long, we have decided to push back in a subsequent work the further study of some crucial properties, such as the derivation of Markov's property, It\^o's formula, and quadratic approximation of the local time.  

Up to our knowledge, this is the first result in literature that provides existence and uniqueness of a Walsh spider's diffusion with non constant spinning measure. We believe that this contribution can lead to the formulation of new original problems in the field of stochastic analysis and partial differential equations. For example, we think of the study of viscosity solutions for non degenerate HJB systems posed on a star-shaped network with non linear Kirchhoff's boundary condition (currently investigated by the second author) and the rigorous formulation of a stochastic scattering control problem for a Walsh spider diffusion, with optimal directions selected from its own local time. 

We are interested in the construction - in a weak sense - of a particular type of singular processes in the plane. The idea was first introduced by Walsh in the epilogue of \cite{Walsh}. As explained in \cite{Barlow-Pitman-Yor}: {\it (...) Started at a point $z$ in the plane away from the origin $0$, such a process moves like a one-dimensional Brownian motion along the ray joining $z$ and $0$ until it reaches $0$. Then the process is kicked away from $0$ by an entrance law which makes the radial part of the diffusion a reflecting Brownian motion, while randomizing the angular part"}. The difficulty arises since, as Walsh explains with certain sense of humor (quoted from \cite{Walsh} and \cite{Barlow-Pitman-Yor}):
{\it (...) It is a diffusion which, when away from
the origin, is a Brownian motion along a ray, but which has what might be called a round-house singularity at the origin: when the process enters it, it, like Stephen Leacock’s hero, immediately rides off in all directions at once.}
Rigorous construction of Walsh's Brownian motion may be found in \cite{Barlow-Pitman-Yor} (see also the references given therein).

Generalizing the idea of Walsh, diffusions on graphs were introduced in the seminal works of Freidlin and Wentzell \cite{Freidlin-Wentzell-2} and Freidlin and Sheu \cite{freidlinS} for a star-shaped network (and afterwards for open books in \cite{Freidlin}). 
For the sake of conciseness and also because of the difficulty of the subject, in this contribution we will focus on the case where the diffusion lives on a star-shaped network:~these diffusions are called {\it spiders} and, sticking with this metaphor, the state-space junction network may be referred as {\it the spider's web}. 

Walsh spider diffusion processes are currently being thoroughly studied and extended to various settings. Let us mention the following recent articles amongst the vast literature on the subject: in \cite{Ichiba} the authors propose the construction of stochastic integral equations related to Walsh semimartingales, in \cite{Ichiba-2} the authors compute the possible stationary distributions, in \cite{Karatzas-Yan} the authors investigate stopping control problems involving Walsh semimartingales, in \cite{Atar} the authors study related queuing networks, whereas \cite{Bayraktar} addresses the problem of finding related stopping distributions. We refer to the introduction of \cite{Ichiba} for a comprehensive survey, the reader may also find therein many older references on the subject.
\medskip

The difficulty when constructing spider processes comes from the fact that the natural filtration generated by a spider process is not a Brownian filtration as soon as the underlying spider web possesses three branches or more. Although the motion behaves as a one dimensional semimartingale during its stay on a particular branch and is driven by a Brownian motion along the branch, the fact that the process {\it 'has more than two directions to choose infinitesimally when moving apart from the junction point'} makes it impossible for a spider diffusion to be adapted to a Brownian filtration (whatever the dimension of the underlying Brownian motion). This crucial fact imposes that it is not possible to apply directly It\^o's stochastic calculus theory to both components $(x,i)$ of the spider process (where $x$ stands for the distance to the vertex junction and $i$ stands for the label of the branch), but only to the distance component $x$. As mentioned in \cite{Lejay}: {\it ''this fact has given rise to an abundant literature on Brownian filtrations''}. We mention \cite{DeMeyer} for a comprehensive argument that the natural filtration cannot be generated by a Brownian motion when $I\geq 3$ and also the unavoidable reference \cite{Barlow} (see also \cite{tsirel}) for a theoretical study of these questions.

The relationship between Walsh diffusion processes and skew diffusions (that correspond to the particular case where $I=2$) may be found in the survey \cite{Lejay}. It is notable that skew diffusions solve stochastic differential equations that involve the local time of the unknown process (in this case the natural filtration of the process is a Brownian filtration) . 

Although difficult, several constructions of Walsh's diffusions have been proposed in the literature, see  for e.g. \cite{Barlow-Pitman-Yor} for a construction based on Feller’s semigroup theory, \cite{Salisbury} for a construction using the excursion theory for right processes, and also the very recent preprint \cite{Bayraktar-2} that proposes a new construction of Walsh diffusions using time changes of multi-parameter processes. 

Note that in all these constructions, the spinning measure of the process -- that is strongly related somehow to {\it 'the way of selecting infinitesimally the different branches from the junction vertex'} -- remains constant through time.

In this article, we will exploit the results coming from \cite{Freidlin-Wentzell-2}. Before explaining the whys and wherefores of our contribution, we need to explain briefly the results contained therein, since they constitute the starting point of our investigations.

\medskip 
Let $I$ a positive integer ($I\geq 2$). Denote $[I] := \{1,\dots, I\}$ and
\begin{align*}
\mathcal{J}~~:=~~\{{\bf 0}\}\cup \pare{(0,\infty)\times [I]}.
\end{align*} 
where ${\bf 0} = \{(0,j), j\in [I]\}$ is the junction vertex equivalence class. In the sequel, we will often denote this equivalence class by one of its representatives.

Assume we are given $I$ pairs $(\sigma_{i},b_{i})_{i\in [I]}$ of mild coefficients from $[0,+\infty)$ to $\R$ satisfying the following condition of ellipticity: $\forall i\in [I],~\sigma_i>0$. We are also given $\big(\alpha_1,\ldots,\alpha_I)$ positive constants satisfying $\displaystyle \sum_{i=1}^I \alpha_i=1$. It is proved in \cite{Freidlin-Wentzell-2} that there exists a continuous Feller Markov process $\big(x(\cdot),i(\cdot)\big)$ valued in ${\mathcal{J}}$, whose generator is given by the following operator:
$$\mathcal{L}:\begin{cases}\mathcal{C}^2(\mathcal{J})\to \mathcal{C}(\mathcal{J}),\\
f=f_i(x)\mapsto 
b_i(x)\partial_xf_i(x)+\displaystyle \frac{\sigma_i^2(x)}{2}\partial_x^2f_i(x)\end{cases},$$
with domain
$$D(\mathcal{L}):=\Big\{ f \in \mathcal{C}^2(\mathcal{J}),~~\displaystyle \sum_{i=1}^I\alpha_i\partial_xf_i(0)=0\Big\}.$$
In the above, for $k=0,1,2,\dots$ the $k$-th order continuous class space on the junction network $\mathcal{C}^k(\mathcal{J})$ is defined as 
\[
\left \{f:~\mathcal{J}\rightarrow \R,\,\,(x,i)\mapsto f_i(x)\;\;\text{s.t.}\;\;\;\forall (i,j)\in [I]^2,\;\;\;f_i(0) = f_j(0),\;\;\;f_i\in {\mathcal C}^k([0,\infty))\right \}.
\]
The network $\mathcal{J}$ and the generator $(\mathcal{L}, D(\mathcal{L}))$ arise naturally as some kind of limit when applying an averaging principle to a family of diffusion processes living in the Euclidean space $\R^2$ exhibiting a "fast" and "slow" component. The characteristics of the limiting
process are obtained by averaging the characteristics of the slow process along the "fast" directions with respect to the stationary distribution of the "fast" Markov process. When the fast component tends to concentrate to $\mathcal{J}$ we may obtain a Markov process with a generator of type $(\mathcal{L}, D(\mathcal{L}))$. The junction point then appears as a point of equilibrium of the fast component and the positive constants $\big(\alpha_1,\ldots,\alpha_I)$ as a stationary distribution for each of the selected directions (see \cite{Freidlin-Wentzell-2} for details).
\medskip

\medskip

Our objective in this paper is to extend the previous mentioned existence result obtained by Freidlin and Wentzell in \cite{Freidlin-Wentzell-2} for spider motions by allowing all coefficients of the process - including the spinning measure - to depend on the own local time of the process spent at the junction, together with the current running time. 

With this perspective in mind, we felt that we could not follow the lines of construction of \cite{Freidlin} and \cite{freidlinS}. 
Instead, we take the results stated in \cite{Freidlin-Wentzell-2} as a starting point and we construct 'by hand' a solution of a martingale problem that is purposely designed in order to take the presence of the local time in all the leading coefficients into account. Of course, since we add the local time in the picture, the canonical space has to be extended accordingly.

In order to exhibit a martingale $\big(\mathcal{S}_{pi}-\mathcal{M}_{ar}\big)$ problem adapted to our expectations, we need to compute the generator of our potential spider motion which possesses now three components $(x(t),i(t),\ell^0_t)_{t\in [0,T]}$. Here $(\ell_t^0)_{t\in [0,T]}$ stands for the local time of the expected spider process at the junction.

It appears that the following {\it local time Kirchhoff's transmission condition}:
\begin{equation}
\label{eq:condition-transmission-intro}
\partial_l f(t,0,l)+\displaystyle \sum_{i=1}^I \alpha_i(t,l)\partial_x f_i(t,0,l)=0,\;\;\;(t,l)\in(0,T)\times(0,+\infty)
\end{equation}
must hold for any $f:~[0,T]\times \mathcal{J}\times \R^+\rightarrow \R$, $(t,x,i,l)\mapsto f_i(t,x,l)$ satisfying the continuity condition $f_i(t,0,l)=f_j(t,0,l)=f(t,0,l),\;\;(t,l)\in[0,T]\times[0,+\infty)^2,\;(i,j)\in[I]^2$. In the above, the variable $l$ corresponds to the local time $(\ell_t^0)_{t\in [0,T]}$. For an explanation on the reasons why this {\it local time Kirchhoff's transmission condition} \eqref{eq:condition-transmission-intro} arises naturally in our problem, the reader might want to have a look at the paragraph \ref{subsection-generator} in the present paper.

The parabolic differential equation corresponding to 
the generator with condition transmission \eqref{eq:condition-transmission-intro} has been studied in our paper \cite{Martinez-Ohavi EDP}. The results contained therein are of crucial importance when turning to the difficult problem of uniqueness for the martingale problem $\big(\mathcal{S}_{pi}-\mathcal{M}_{ar}\big)$. 

The statement of the martingale problem $\big(\mathcal{S}_{pi}-\mathcal{M}_{ar}\big)$ studied in this paper is given in Section \ref{subsec: Main results}. Notably, remark that the tests functions do not depend on the local time variable: our result implies that whenever we are looking at some continuous process $(x(t),i(t),l(t))_{t\in [0,T]}$ with state space $\mathcal{J}\times \R^+$ that is starting from $(x_\ast,i_\ast,l_\ast)$ and such that for any $f\in\mathcal{C}^{1,2}_b([0,T]\times \mathcal{J})$
\begin{align}
\label{eq:def-V-intro}
&\Bigg(f_{i(s)}(s,x(s))- f_{i}(0,x)\displaystyle -\int_{0}^{s}\pare{\partial_tf_{i(u)}(u,x(u))+\displaystyle\frac{1}{2}\sigma_{i(u)}^2(u,x(u),l(u))\partial_{xx}^2f_{i(u)}(u,x(u))}du\nonumber\\
&\displaystyle-\int_{0}^{s}\pare{b_{i(u)}(u,x(u),l(u))\partial_xf_{i(u)}(u,x(u))}du-\displaystyle\sum_{j=1}^{I}\int_{0}^{s}\alpha_j(u,l(u))\partial_{x}f_{j}(u,0)dl(u)~\Bigg)_{0\leq s\leq T}\nonumber,
\end{align} 
is a martingale under some probability measure $\P$, we are allowed to deduce that $(\ell^0_t:= l(t) - l_\ast)_{t\in [0,T]}$ is in fact the local time of the spider $(x(t),i(t))_{t\in [0,T]}$ as long as it is $\P$-almost surely increasing only on the times where the component $(x(t))_{t\in [0,T]}$ equals $0$. 
\medskip

Our main theorem - Theorem \ref{th: exist Spider } - states that the martingale problem $\big(\mathcal{S}_{pi}-\mathcal{M}_{ar}\big)$ is well-posed on the corresponding canonical space.
The existence proof relies on a careful adaptation of the seminal construction for solutions of classical martingale problems that have $\R^d$ as the underlying state space, and uses concatenation of probability measures, such as performed in the essential reference book \cite{Stroock}. The uniqueness proof is more involved and, as already mentioned, comes essentially from the results obtained  in \cite{Martinez-Ohavi EDP} for the associated parabolic PDE with a {\it local time Kirchhoff's transmission condition} given by \eqref{eq:condition-transmission-intro}.

We derive two results from this main result. First, we manage to compute the transition density kernel of $(x(t),i(t),\ell^0_t)_{t\in [0,T]}$ when the spider behaves as a Brownian motion on each branch, but with a spinning measure that is now allowed to depend on the local time.

Another byproduct of our results -- when turning to the particular case $I=2$ and making an analogy with $\R^{-}$ and $ \R^+$ for the branches $1$ and $2$ -- implies that there exists a unique weak solution of stochastic differential equations with real state space of type
\begin{equation}
\label{eq:sde-I-equals-2-intro}
y_0 = y,\hspace{0,4 cm}dy(s) = \tilde{\sigma}(s,y(s),\ell_s^0(y))dW_s + \tilde{b}(s,y(s),\ell_s^0(y))ds + \beta(s,\ell^0_s(y))d\ell_s^0(y)\;\;s\in [0,T]
\end{equation}
where $(\ell^0_t(y))_{t\in [0,T]}$ stands now for the local time at zero of the unknown solution process $(y_t)_{t\in [0,T]}$ that takes now values on the real line. These equations generalize the time inhomogeneous stochastic differential equations involving the local time of the unknown process studied in \cite{Etore-Martinez-2} and also the {\it variably skewed Brownian motion} introduced in \cite{Barlow-Burdzy} (see also \cite{Cox}). Note that the solution of \eqref{eq:sde-I-equals-2-intro} gives a new example of a process that belongs to the class $(\Sigma)$ for semimartingales: for an account on the class $(\Sigma)$ and the origins of the class $(\Sigma)$, we refer the reader to \cite{Eyi-Obiang-al}, \cite{Eyi-Obiang-al-2} and the references therein.

\subsection{Organization of the paper}

In Section \ref{sec: main assumptions} we introduce the main notations and assumptions that will be used throughout the paper. We follow by providing a remainder of the principal results concerning spider diffusions with homogeneous coefficients and constant spinning measure constructed in the seminal papers \cite{Freidlin-Wentzell-2} and \cite{freidlinS} that will be of constant use in the sequel in a form that is adapted for our purposes.
Section \ref{sec:main-results} is devoted exclusively to the formulation of the martingale problem $\big(\mathcal{S}_{pi}-\mathcal{M}_{ar}\big)$ and the statement of our main result  Theorem \ref{th: exist Spider  }, which asserts that this martingale problem is well-posed (in the classical sense given in \cite{Stroock}). 

Before moving to the existence proof, we need to supply the main ingredients that will be used for our construction: this is done in Section \ref{sec: concatenation}, where we impart the necessary material for the concatenation of probability measures on the spider's web-local time path space. 

Once all ingredients for the proof have been properly introduced, we perform the construction in Section \ref{sec: preuve et paths properties} (Theorem \ref{th: exist Spider  } - Existence) with the use of Prokhorov's theorem. We end this section by giving the natural representation of the first component of the spider with $(x(t))_{t\in [0,T]}$ described as an It\^o reflected process.

Next, we move on to the problem of the uniqueness in Section \ref{sec:uniqueness}: we start by introducing the backward parabolic differential equation associated to the martingale problem $\big(\mathcal{S}_{pi}-\mathcal{M}_{ar}\big)$ (that we studied in \cite{Martinez-Ohavi EDP}) and further use this PDE to prove the uniqueness result {\it via} a regularization procedure (Theorem \ref{th: exist Spider  } - uniqueness).

We end the paper with Section \ref{sec:deriving-results} by bringing forward two results that can be deduced from our study and that we believe are interesting {\it per se} (see above the last paragraph of our introduction).
\section{Notations - Main assumptions - Remainder on spider diffusions}\label{sec: main assumptions}
\subsection{Notations and assumptions}
\label{subsec: Notations and preliminary results}
Fix $I\geq 2$ an integer. We denote $[I] = \{1,\dots, I\}$ and consider $\mathcal{J}$ a junction space with $I$ edges defined by
\begin{align*}
\mathcal{J}~~:=~~\{{\bf 0}\}\cup \pare{(0,\infty)\times [I]}.
\end{align*}

All the points of $\mathcal{J}$ are described by couples $(x,i)\in [0,\infty)\times [I]$  with the junction point ${\bf 0}$ identified with the equivalent class $\{(0,i)~:~i\in [I]\}$.

Thus and with a slight abuse of notation, the common junction point ${\bf 0}$ of the $I$ edges will be often denoted be $0$ and we will also often identify the space $\mathcal{J}$ with a union of $I$ edges
$J_i=[0,+\infty)$ satisfying $J_i\cap J_j=\{0\}$ whenever $(i,j)\in [I]^2$ with $i\neq j$. With these notations $(x,i)\in {\mathcal{J}}$ is equivalent to asserting that $x\in J_i$.
\vspace{0,3 cm}

We endow naturally ${\mathcal{J}}$ with the distance $d^{\mathcal{J}}$ defined by
\begin{equation*}
\forall \Big((x,i),(y,j)\Big)\in \mathcal{J}^2,~~d^\mathcal{J}\Big((x,i),(y,j) \Big)  := \left\{
\begin{array}{ccc}
 |x-y| & \mbox{if }  & i=j\;,\\ 
x+y & \mbox{if }  & {i\neq j},\;
\end{array}\right.
\end{equation*}
so that $\pare{\mathcal{J}, d^{\mathcal{J}}}$ is a Polish space.
\vspace{0,3 cm}

For $T>0$, we introduce the time-space domain $\mathcal{J}_T$ defined by
\begin{align*}\mathcal{J}_T~~&:=~~[0,T]\times\mathcal{J},
\end{align*}
and consider $\mathcal{C}^{\mathcal{J}}[0,T]$ the Polish space of maps defined from $[0,T]$ onto the junction space $\mathcal{J}$ that are continuous w.r.t. the metric $d^{\mathcal{J}}$. The space $\mathcal{C}^{\mathcal{J}}[0,T]$ is naturally endowed with the uniform metric $d^\mathcal{J}_{[0,T]}$ defined by: 
\begin{equation*}
\forall \Big((x,i),(y,j)\Big)\in \pare{\mathcal{C}^{\mathcal{J}}[0,T]}^2,~~d^\mathcal{J}_{[0,T]}:=\sup_{t \in [0,T]}d^\mathcal{J}\Big((x(t),i(t)),(y(t),j(t)) \Big).
\end{equation*}
Together with $\mathcal{C}^{\mathcal{J}}[0,T]$, we introduce
\begin{eqnarray*}
\mathcal{L}[0,T]~~:=~~\Big\{l:[0,T]\to \R^+,\text{ continuous non decreasing}\Big\}
\end{eqnarray*} 
endowed with the usual uniform distance $|\,.\,|_{(0,T)}$.

The modulus of continuity on $\mathcal{C}^{\mathcal{J}}[0,T]$ and $\mathcal{L}[0,T]$ are naturally defined for any $\theta\in (0, T]$ as
\begin{align*}
&\forall X=\big(x,i\big)\in \mathcal{C}^{\mathcal{J}}[0,T],\\
&\hspace{1,3 cm}\omega\pare{X,\theta}= \sup\Big\{d^{\mathcal{J}}\pare{(x(s),i(s)),(x(u),i(u))} ~\big\vert~(u,s)\in [0,T]^2,~~|u-s|\leq \theta\Big\};\\ 
&\forall f\in {\mathcal{L}}[0,T],\\
&\hspace{1,3 cm}\omega\pare{f,\theta}= \sup \Big\{|f(u)-f(s)|~\big\vert~(u,s)\in [0,T]^2,~~|u-s|\leq \theta\Big\}.
\end{align*}

We then form the product space 
\begin{align*}
\Phi~~=~~\mathcal{C}^{\mathcal{J}}[0,T]\times \mathcal{L}[0,T]
\end{align*}
 considered as a measurable Polish space equipped with its Borel $\sigma$-algebra $\mathbb{B}(\Phi)$ generated by the open sets relative to the metric $d^{\Phi}:=d^{\mathcal{J}}_{[0,T]} + |\,.\,|_{(0,T)}$. 

The canonical process $\tilde{X}$ on $(\Omega, \mathcal{F}) := (\Phi, \mathbb{B}(\Phi))$ is defined as
\[\tilde{X}:\begin{array}{cll}
[0,T]\times \Omega&\to\;\;{\mathcal J}\times \R^+\\
(t,\omega) &\mapsto\;\;\tilde{X}(t,\omega) := \omega(t).
\end{array}
\]
We denote by $(\Psi_t:=\sigma(\tilde{X}(s), 0\leq s \leq t))_{0\leq t\leq T}$ the canonical filtration on $\pare{\Phi, \mathbb{B}(\Phi)}$.

Note that we will often write $(x_\ast,i_\ast,l_\ast)$ instead of $((x_\ast,i_\ast),l_\ast)$ an element of ${\mathcal J}\times \R^+$.
Also we will sometimes need to separate the coordinates of $\tilde{X}$ and we will denote by $X$ (without the '\;$\tilde{}$\;' superscript) the process formed by the first two coordinates of $\tilde{X}$, namely
\[
X(t) = (x(t),i(t)),\hspace{0,3 cm}\forall t\in [0,T],
\]
corresponding to $\tilde{X} = (x,i,l)\in \Phi$. 

Moreover, in order to emphasize the fact that the canonical process is an application on $\Phi$, we will sometimes have to abusively write $\tilde{X}_t(w)$ or $\pare{(x_t(w), i_t(w), l_t(w)}$ instead of $\tilde{X}(t,\omega)$: the reader should be aware from the context that the argument $w$ stands here for an arbitrary element of $\Phi$. Most times, we will simply write $\omega = (x,i,l)$ to denote the elements of $(\Phi,\mathbb{B}(\Phi))$ and 
\[\tilde{X} = (\tilde{X}(t, (x,i,l)))_{t\in [0,T]} = (x(t),i(t),l(t))_{t\in [0,T]} = (x,i,l)\] when no confusion is possible.

The modulus of continuity on $\Phi$ is naturally defined for any $\theta\in (0, T]$ as
\begin{align*}
&\forall \tilde{X}=\big(x,i,l\big)\in \mathcal{C}\pare{[0,T];{\mathcal{J}\times \R^+}},\\
&\hspace{0,5 cm}\tilde{\omega}(\tilde{X},\theta)= \sup\Big\{d^{\mathcal{J}}\pare{(x(s),i(s)),(x(u),i(u))} + |l(u) - l(s)| ~\big\vert~(u,s)\in [0,T]^2,~~|u-s|\leq \theta\Big\}.
\end{align*}
The set of the probability measures on the measurable space $(\Phi,\mathbb{B}(\Phi))$ is denoted by $\mathcal{P}(\Phi,\mathbb{B}(\Phi))$. We endow this space with the topology of weak convergence of probability measures. Since $\Phi$ is a Polish space, this topology is induced by the Prokhorov metric $\pi$ on $\mathcal{P}(\Phi,\mathbb{B}(\Phi))$. Recall that the Prokhorov distance $\pi(P,Q)$ between two probability measures on $(\Phi,\mathbb{B}(\Phi))$ is defined as
\begin{align*}
&\forall P,Q \in \mathcal{P}(\Phi,\mathbb{B}(\Phi)),\\
&\hspace{1,3 cm}\pi(P,Q)~~=~~\inf\left \{\varepsilon>0~\vert~\forall A\in \mathbb{B}(\Phi),\;\; P(A)\leq Q(A^{\varepsilon}) + \varepsilon \text{ and }Q(A)\leq P(A^{\varepsilon}) + \varepsilon\right \}
\end{align*}
where $A^{\varepsilon}$ denotes the $\varepsilon$-dilation of the set $A\in \Phi$ ($A^{\varepsilon}=\{\tilde{X}\in \Phi,~d^{\Phi}(\tilde{X}, A)\leq \varepsilon\}$).

We introduce the set $\mathcal{P}([I])\subset [0,1]^I$   
\begin{align*}
    \mathcal{P}([I]):=\Big\{(\alpha_{i})\in[0,1]^{I}~\big\vert~ \sum_{i=1}^{I}\alpha_i=1\Big\}
\end{align*}
the simplex set giving all probability measures on $[I]$.

\vspace{0,7 cm}

We introduce the following data  
$$\begin{cases}
(\sigma_i)_{i \in \{1 \ldots I\}} \in \pare{\mathcal{C}_b([0,T]\times[0,+\infty)\times [0,+\infty);\R)}^I\\
(b_i)_{i \in \{1 \ldots I\}} \in \pare{\mathcal{C}_b([0,T]\times[0,+\infty)\times [0,+\infty);\R)}^I\\
\alpha=(\alpha_i)_{i \in \{1 \ldots I\}} \in \mathcal{C}([0,T] \times [0,+\infty);\, \mathcal{P}([I]))\\
\end{cases}
$$
satisfying the following assumption $(\mathcal{H})$ (where $({\bf A})$ stands for alpha, $({\bf E})$ for ellipticity, and $({\bf R})$ for regularity):

$$\textbf{Assumption } (\mathcal{H})$$
\begin{align*}
&({\bf A})~~\exists\;\ds \underline{a}\in \left (0,{1}/{I}\right ],~~\forall i \in \{1    \ldots    I\}, ~~\forall (t,l)\in [0,T]\times[0,+\infty),~~\alpha_i(t,l)~~\ge~~\underline{a}.\\
&({\bf E})~~\exists\,\,\underline{\sigma} >0,~~\forall i \in \{1    \ldots    I\}, ~~\forall (t,x,l)\in [0,T]\times[0,+\infty)\times [0,+\infty),~~\sigma_i(t,x,l)~~\ge~~\underline{\sigma}.\\
&({\bf R})~~\exists (|b|,|\sigma|,|\overline{a}|)\in (0,+\infty)^3,~~\forall i \in \{1 \ldots I\},\\
&({\bf R} - i)\hspace{0,5 cm}\displaystyle\sup_{t,x,l}|b_i(t,x,l)|\;\;\,+\sup_{t,\ell}\sup_{(x,y),\;x\neq y} \frac{|b_i(t,x,l)-b_i(t,y,l)|}{|x-y|}\\
&\displaystyle\hspace{3,5cm}+\sup_{t,x}\sup_{(l,l'),\;l\neq l'} \frac{|b_i(t,x,l)-b_i(t,x,l')|}{|l-l'|}\\
&\displaystyle
\hspace{3,5 cm}+\sup_{x,l}\sup_{(t,s),\;t\neq s} \frac{|b_i(t,x,l)-b_i(s,x,l)|}{|t-s|}~~\leq~~|b|,\\
&({\bf R} - ii)\hspace{0,5 cm}\displaystyle\sup_{t,x,l}|\sigma_i(t,x,l)|\;\;\,+\sup_{t,l}\sup_{(x,y),\;x\neq y} \frac{|\sigma_i(t,x,l)-\sigma_i(t,y,l)|}{|x-y|}\\
&\displaystyle\hspace{3,5cm}+\sup_{t,x}\sup_{(l,l'),\;l\neq l'} \frac{|\sigma_i(t,x,l)-\sigma_i(t,x,l')|}{|l-l'|}\\
&\displaystyle
\hspace{3,5 cm}+\sup_{x,l}\sup_{(t,s),\;t\neq s} \frac{|\sigma_i(t,x,l)-\sigma_i(s,x,l)|}{|t-s|}~~\leq ~~|\sigma|,\\
&({\bf R}- iii)\hspace{0,5 cm}\displaystyle \sup_t\sup_{(l,l'),\;l\neq l'} \frac{|\alpha_i(t,l)-\alpha_i(t,l')|}{|l-l'|}+\displaystyle\sup_l\sup_{(t,s),\;t\neq s} \frac{|\alpha_i(t,l)-\alpha_i(s,l)|}{|t-s|}~~\leq ~~|\overline{a}|.
\end{align*}
\vspace{0,3 cm}

Let us introduce $\mathcal{C}^{1,2}_b(\mathcal{J}_T)$ the class of function defined on $\mathcal{J}_T$ with regularity $\mathcal{C}^{1,2}_b([0,T]\times [0,\infty))$ on each edge, namely
\begin{align*}
&\mathcal{C}^{1,2}_b(\mathcal{J}_T):=\Big\{f:\mathcal{J}_T\to\R,~~(t,(x,i))\mapsto f_i(t,x)~\Big\vert\\
&\hspace{3,4 cm}~\forall i\in [I], \;f_i:[0,T]\times {J}_i\to\R,\,(t,x)\mapsto f_i(t,x)\in \mathcal{C}^{1,2}_b([0,T]\times J_i),\\
&\hspace{5,4 cm}~ \forall (t,(i,j))\in [0,T]\times [I]^{2}, \,f_i(t,0)=f_j(t,0)\Big\}.
\end{align*}

\subsection{A remainder of results regarding spider diffusion processes}

The seeds of our study come from the results of the seminal paper \cite{freidlinS} that we now restate as a single theorem adapted to our purposes.

\begin{Theorem}\label{th: exi Freidlin}(see Theorem 2.1, Lemma 2.2 and Lemma 2.3 in \cite{freidlinS})

Assume $\pare{\mathcal{H}}$.

Fix $(t,x_\ast,i_\ast,l_\ast)\in [0,T]\times \mathcal{J}\times [0,+\infty)$ and remember our convention that $\tilde{X} = (x,i,l)$ stands for the canonical process on $(\Phi, \B(\Phi))$. 

Then, there exists a unique probability measure $P_{t}^{x_\ast,i_\ast,l_\ast} \in \mathcal{P}(\Phi,\mathbb{B}(\Phi))$, satisfying the following characterization conditions $(\mathcal{S})$:
$$\textbf{Characterization conditions } (\mathcal{S})$$
-\;($\mathcal{S}$-i) For each $u\leq t$, $(x(u), i(u), l(u))=(x_\ast,i_\ast,l_\ast)$, $P_{t}^{x_\ast,i_\ast,l_\ast}$ a.s.\\ 
-\;($\mathcal{S}$-ii) 
There exists an adapted $\pare{P^{x_\ast,i_\ast,l_\ast}_t,\;(\Psi_s)_{0 \leq s\leq T}}$ increasing process $(\ell_s)_{0\leq s\leq T}$ satisfying $\ell_s = 0$ for any $s\in [0,t)$ $P^{x_\ast,i_\ast,l_\ast}_t-{\rm a. s.}$ and
constructed such that
\begin{equation}
\label{pte:fonctionnelle-additive}
l(s) = l_\ast + \ell_s\hspace{0,5 cm}\forall s\in [t,T],\hspace{1,0 cm}P_{t}^{x_\ast,i_\ast,l_\ast}-{\rm a.s.}
\end{equation}
The process $(\ell_s)_{t\leq s\leq t}$ \emph{is a progressively measurable process w.r.t. $(\Psi_s)_{0 \leq s\leq T}$ after time $t$} (in the sense defined by \cite{Stroock} Chapter I, Section 1.2 p.19).

The process $(\ell_s)_{s\in [0,T]}$ increases only on the set $\{s\in [t,T]~\vert~ x(s) = 0\}$, namely
\begin{eqnarray}
\label{eq:tps-local}
\displaystyle\displaystyle\int_{t}^{T}\mathbf{1}_{\{x(u)>0\}}d\ell_u=0,~~P_{t}^{x_\ast\,i_\ast,l_\ast} \text{ a.s.}
\end{eqnarray} 
-\;($\mathcal{S}$-iii) For any $f\in\mathcal{C}^{1,2}_b(\mathcal{J}_T)$, the process defined by

$\forall s\in[t,T]:$
\begin{align}
\label{eq:martingale}
&~~f_{i(s)}(s,x(s)) - f_{i_\ast}(t,x_\ast)\nonumber\\
&\hspace{0.3 cm}-\displaystyle\int_{t}^{s}\pare{\partial_tf_{i(u)}(u,x(u))+\displaystyle\frac{1}{2}\sigma_{i(u)}^2(t,x(u),l_\ast)\partial_{xx}^2f_{i(u)}(u,x(u))}du\nonumber\\
&\hspace{0.3 cm}-\displaystyle\int_t^s \pare{b_{i(u)}(t,x(u),l_\ast)\partial_xf_{i(u)}(u,x(u))}du\nonumber\\
&\hspace{0.3 cm}-\displaystyle\sum_{j=1}^{I}\int_{t}^{s}\alpha_j(t,l_\ast)\partial_{x}f_{j}(u,0)d\ell_u,
\end{align}
is a $(\Psi_s)_{t \leq s \leq T}$ martingale after time $t$ under $P_t^{x_\ast,i_\ast,l_\ast}$.
\medskip
\medskip

Moreover, $P_{t}^{x_\ast,i_\ast,l_\ast} \in \mathcal{P}(\Phi,\mathbb{B}(\Phi))$ satisfies the following properties:
$$\textbf{Properties } (\mathcal{M})$$

-\;($\mathcal{M}$-i) There exists a $\pare{P^{x_\ast,i_\ast,l_\ast}_t,\;(\Psi_s)_{t \leq s\leq T}}$ standard one dimensional Brownian $W^{(t,l_\ast)}$ such that for any $f\in\mathcal{C}^{1,2}_b(\mathcal{J}_T)$, for any $s\in[t,T]$,
\begin{align}\
\label{eq:martingale-mb}
&~~f_{i(s)}(s,x(s))- f_{i_\ast}(t,x_\ast)=\nonumber\\
&\hspace{1.0 cm}\int_t^s\sigma_{i(u)}(t,x(u),l_\ast)dW_u^{(t,l_\ast)}\nonumber\\
&\hspace{0.3 cm}~+\displaystyle\int_{t}^{s}\pare{\partial_tf_{i(u)}(u,x(u))+\displaystyle\frac{1}{2}\sigma_{i(u)}^2(t,x(u),l_\ast)\partial_{xx}^2f_{i(u)}(u,x(u))}du\nonumber\\
&\hspace{0.3 cm}~+\displaystyle\int_t^s b_{i(u)}(t,x(u),l_\ast)\partial_xf_{i(u)}(u,x(u))du\nonumber\\
&\hspace{0.3 cm}~+\displaystyle\sum_{j=1}^{I}\int_{t}^{s}\alpha_j(t,l_\ast)\partial_{x}f_{j}(u,0)d\ell_u.
\end{align} 

--\;($\mathcal{M}$-ii) The process $(x(s),i(s),\ell(s))_{t\leq s\leq T}$ satisfies
\begin{equation}
\label{eq:equation-freidlin-sheu}
dx(s) = \sigma_{i(s)}(t,x(s),l_\ast)dW^{(t,l_\ast)}_s + b_{i(s)}(t,x(s),l_\ast)ds + d\ell_s\;\;\;\;s\in [t,T]
\end{equation}
$P_t^{x_\ast,i_\ast,l_\ast}$ - a.s.

-\;($\mathcal{M}$-iii) Moreover, (cf. (2.10) p.185 in \cite{freidlinS})
\begin{equation}
\label{eq:non-stickness-fs}
\lim_{{\varepsilon}\searrow 0}E_t^{x_\ast,i_\ast,l_\ast}\croc{\int_{t}^T \ind{x(s)\leq \varepsilon}ds} = 0.
\end{equation}
\end{Theorem}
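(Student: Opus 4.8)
The plan is to reduce the entire statement to the time-homogeneous spider results of \cite{freidlinS}, the only genuine work being the bookkeeping needed to accommodate the delayed start at time $t$, the freezing of the coefficients at $(t,l_\ast)$, and the adjunction of the local-time coordinate. Observe first that in every occurrence the coefficients appear as $\sigma_i(t,\cdot,l_\ast)$, $b_i(t,\cdot,l_\ast)$ and $\alpha_j(t,l_\ast)$, so that setting $\tilde\sigma_i(x):=\sigma_i(t,x,l_\ast)$, $\tilde b_i(x):=b_i(t,x,l_\ast)$ and $\tilde\alpha_i:=\alpha_i(t,l_\ast)$ produces a family of \emph{position-dependent} coefficients together with a \emph{constant} spinning measure $(\tilde\alpha_i)_{i\in[I]}\in\mathcal{P}([I])$. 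Assumption $\pare{\mathcal{H}}$ supplies exactly the hypotheses required by \cite{freidlinS}: ellipticity $\tilde\sigma_i\ge\underline\sigma>0$ from $(\mathbf{E})$, boundedness and Lipschitz regularity of $\tilde\sigma_i,\tilde b_i$ from $(\mathbf{R})$, and strict positivity $\tilde\alpha_i\ge\underline a$ from $(\mathbf{A})$.

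For existence, I would apply \cite{freidlinS} to the frozen data to obtain the law of a spider diffusion $X=(x,i)$ started from $(x_\ast,i_\ast)$, then transport it to $\Phi$. On $[0,t]$ I freeze the path at $(x_\ast,i_\ast,l_\ast)$, accounting for $(\mathcal{S}\text{-i})$; on $[t,T]$ I run the Freidlin--Sheu process and adjoin the third coordinate by the \emph{deterministic} prescription $l(s):=l_\ast+\ell_s$, where $(\ell_s)$ is the boundary local time furnished by \cite{freidlinS}. The resulting measure satisfies $(\mathcal{S}\text{-ii})$ since $\ell$ is adapted, nondecreasing and supported on $\{x=0\}$ (the content of the corresponding lemma in \cite{freidlinS}), and it satisfies $(\mathcal{S}\text{-iii})$ because the Freidlin--Sheu martingale characterisation already features this Kirchhoff term: applying It\^o's formula on each excursion of $x$ along a single branch and summing over excursions, the interior terms reproduce $\tfrac12\tilde\sigma_i^2\partial_{xx}^2+\tilde b_i\partial_x$ while the vertex contributions assemble into $\sum_j\tilde\alpha_j\partial_xf_j(u,0)\,d\ell_u$. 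The time dependence of $f\in\mathcal{C}^{1,2}_b(\mathcal{J}_T)$ (the extra $\partial_tf$ term) is handled by the standard space--time device, i.e.\ by passing to the time-augmented process and invoking the time-homogeneous result for the augmented generator.

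Uniqueness is where the reduction must be argued with care. Given any $P\in\mathcal{P}(\Phi,\mathbb{B}(\Phi))$ satisfying $(\mathcal{S})$, the pushforward of $P$ under $\tilde X\mapsto X=(x,i)$ solves exactly the frozen Freidlin--Sheu martingale problem, whose well-posedness I take from \cite{freidlinS}; in particular the law of $X$ under $P$ is uniquely determined, concretely via the resolvent of the generator subject to the Kirchhoff condition $\sum_i\tilde\alpha_i\partial_xf_i(0)=0$, which yields a separating family of test functions and pins down the finite-dimensional distributions. It then suffices to observe that the third coordinate is a measurable functional of $X$: by $(\mathcal{S}\text{-ii})$ the increment $\ell=l-l_\ast$ is the nondecreasing, $\{x=0\}$-supported, bounded-variation part in the canonical decomposition of the continuous semimartingale $x$, and such a decomposition is pathwise unique. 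Hence the law of $X$ determines that of $\tilde X=(x,i,l_\ast+\ell)$, giving uniqueness of $P=P_t^{x_\ast,i_\ast,l_\ast}$.

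Finally, the properties $(\mathcal{M})$ are read off from the same reduction. Property $(\mathcal{M}\text{-i})$ follows from the martingale representation for the continuous local-martingale part of $f_{i(s)}(s,x(s))$, the Brownian motion $W^{(t,l_\ast)}$ being extracted from the radial martingale via L\'evy's characterisation after dividing out $\tilde\sigma$ (legitimate by ellipticity). Property $(\mathcal{M}\text{-ii})$ is obtained by feeding into $(\mathcal{S}\text{-iii})$ test functions that agree with $x\mapsto x$ near the vertex on every branch, so that $\sum_j\tilde\alpha_j\partial_xf_j(0)=\sum_j\tilde\alpha_j=1$ and the Kirchhoff term collapses to the reflection term $d\ell_s$ (a truncation argument removes the unboundedness of such functions). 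Property $(\mathcal{M}\text{-iii})$, the non-stickiness, is precisely the estimate recalled from \cite{freidlinS}. I expect the only genuinely delicate point to be the uniqueness transfer of the previous paragraph, namely the rigorous identification of $\ell$ as a pathwise functional of the radial semimartingale together with the exact matching of the two martingale-problem formulations, since the interior construction, the Kirchhoff computation and the non-stickiness estimate are all inherited directly from \cite{freidlinS}.
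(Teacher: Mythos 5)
Your proposal is correct and follows essentially the same route as the paper, which treats this theorem as imported from \cite{freidlinS} and, in the remark immediately following the statement, realises $P_t^{x_\ast,i_\ast,l_\ast}$ as the pushforward of the frozen-coefficient Freidlin--Sheu law $P^{x_\ast,i_\ast}$ under the map that freezes the path on $[0,t]$ and adjoins the coordinate $l_\ast+\ell_s^0(X)$, the local time being a functional of $X$. Your elaborations (space--time augmentation for the $\partial_t f$ term, identification of $\ell$ via the pathwise-unique Skorokhod/semimartingale decomposition for the uniqueness transfer) fill in details the paper leaves to the citation, but do not change the argument.
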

\begin{Remark}
The careful reader might notice that our canonical space $(\Phi, \B(\Phi))$ is not the canonical space used in \cite{freidlinS}, which is given in \cite{freidlinS} by $(\mathcal{J}_T, \B(\mathcal{J}_T))$. Let us explain this issue that might be considered as an abuse at first glance. Take $t$ and $l_\ast$ above as fixed extrinsic parameters and let $P^{x_\ast,i_\ast}$ denote the law on $(\mathcal{C}^{\mathcal{J}}[0,T], \B(\mathcal{C}^{\mathcal{J}}[0,T]))$ of the Feller process $X$ of Theorem 2.1 in \cite{freidlinS} starting from $(x_\ast,i_\ast)$ with coefficients $(\sigma_i, b_i,\alpha_i) := (\sigma_i(t,.), b_i(t,.),\alpha(t,l_\ast))$. Then the probability $P_{t}^{x_\ast,i_\ast,l_\ast}$ in the statement of Theorem \ref{th: exi Freidlin} above is nothing but the image law on $(\Phi, \B(\Phi))$ of $P^{x_\ast,i_\ast}$ under the measurable map from $(\mathcal{C}^{\mathcal{J}}[0,T], \B(\mathcal{C}^{\mathcal{J}}[0,T]))$ to $(\Phi, \B(\Phi))$ defined by
\[
\psi : X\mapsto \pare{(x_\ast,i_\ast,l_\ast){\bf 1}_{s\leq t} + (X(s), l_\ast + \ell_s^0(X)){\bf 1}_{s>t}}_{0\leq s\leq T}
\]
where $(\ell_s^0(X))_{0\leq s\leq T}$ stands for the local time process given by Lemma 2.2 in \cite{freidlinS} which is a functional of $X$ (by Lemma 2.2  eq. (2.7) in \cite{freidlinS}). We have $P_{t}^{x_\ast,i_\ast,l_\ast} = P^{x_\ast,i_\ast}\circ \psi^{-1}$: as such $P_{t}^{x_\ast,i_\ast,l_\ast}$ is uniquely determined and satisfies all requirements of Theorem \ref{th: exi Freidlin} above, whose statement is now adapted for our future purposes.
\end{Remark}
\begin{Remark}
\label{rem:importante-construction-canonique}
From \eqref{pte:fonctionnelle-additive}, note that the above construction and the properties of $(\ell_s)_{t\leq s\leq T}$ (Condition ($\mathcal{S}$-ii)) permit to assert that for all $j\in [I]$
\[\displaystyle\int_{t}^{s}\alpha_j(t,l_\ast)\partial_{x}f_{j}(u,0)d\ell_u = \displaystyle\int_{t}^{s}\alpha_j(t,l_\ast)\partial_{x}f_{j}(u,0)dl(u) = \displaystyle\int_{t}^{s}\alpha_j(t,l(t))\partial_{x}f_{j}(u,0)dl(u)\] holds $P_{t}^{x_\ast,i_\ast,l_\ast}$ a.s. So we may substitute this equality to write \eqref{eq:martingale} and \eqref{eq:martingale-mb} accordingly.
In the same manner, we could have written 
\begin{eqnarray}
\label{eq:tps-local-2}
\displaystyle\displaystyle\int_{t}^{T}\mathbf{1}_{\{x(u)>0\}}dl(u)=0,~~P_{t}^{x_\ast\,i_\ast,l_\ast} \text{ a.s.}
\end{eqnarray} 
instead of \eqref{eq:tps-local}
and written that for any $f\in\mathcal{C}^{1,2}_b(\mathcal{J}_T)$, the process defined by
$\forall s\in[t,T]:$
\begin{align}
\label{eq:martingale-2}
&~~f_{i(s)}(s,x(s)) - f_{i_\ast}(t,x_\ast)\nonumber\\
&\hspace{0.3 cm}-\displaystyle\int_{t}^{s}\pare{\partial_tf_{i(u)}(u,x(u))+\displaystyle\frac{1}{2}\sigma_{i(u)}^2(t,x(u),l_\ast)\partial_{xx}^2f_{i(u)}(u,x(u))}du\nonumber\\
&\hspace{0.3 cm}-\displaystyle\int_t^s \pare{b_{i(u)}(t,x(u),l_\ast)\partial_xf_{i(u)}(u,x(u))}du\nonumber\\
&\hspace{0.3 cm}-\displaystyle\sum_{j=1}^{I}\int_{t}^{s}\alpha_j(t,l_\ast)\partial_{x}f_{j}(u,0)dl(u),
\end{align}
is a $(\Psi_s)_{t \leq s \leq T}$ martingale after time $t$ under $P_t^{x_\ast,i_\ast,l_\ast}$, instead of ($\mathcal{S}$-iii).

\end{Remark}
\medskip
\medskip

\section{Main result}
\label{sec:main-results}

\subsection{Main result - The martingale problem \emph{$\big(\mathcal{S}_{pi}-\mathcal{M}_{ar}\big)$}
for a spider diffusion whose coefficients and spinning measure depend on its own local time}
\label{subsec: Main results}
\medskip
\medskip

We now state our main result, related to the existence and uniqueness of weak solutions for a class of spider diffusions with random selections depending on the own local time of the process at the junction point.

We define the following martingale problem of $(\Phi,\mathbb{B}(\Phi))$:
\medskip
\begin{center}
\emph{$\big(\mathcal{S}_{pi}-\mathcal{M}_{ar}\big)$}
\end{center}
\medskip
As usual denote $\tilde{X}=(x,i,l)$ the canonical process on $(\Phi,\mathbb{B}(\Phi))$.

{\it For $(x_\ast,i_\ast,l_\ast)\in \mathcal{J}\times \R^+$, find a probability $\P^{x_\ast,i_\ast,l_\ast}$ defined on the measurable space $(\Phi,\mathbb{B}(\Phi))$ such that:\\
-(i) $\tilde{X}(0)=\big(x_\ast,i_\ast,l_\ast\big)$, $\P^{x_\ast,i_\ast,l_\ast}$-a.s.\\ 
-(ii) For each $s\in [0,T]$: 
\begin{eqnarray*}
\displaystyle\displaystyle\int_{0}^{s}\mathbf{1}_{\{x(u)>0\}}dl(u)=0,~~\P^{x_\ast,i_\ast,l_\ast}-\text{ a.s.}
\end{eqnarray*}
and $(l(u))_{u\in [0,T]}$ has increasing paths $\P^{x_\ast,i_\ast,l_\ast}$-almost surely.\\
-(iii) For any $f\in\mathcal{C}^{1,2}_b(\mathcal{J}_T)$, the following process:
\begin{align}
\label{eq:def-V}
&\Bigg(~V^f(s):=f_{i(s)}(s,x(s))- f_{i_\ast}(0,x_\ast)\displaystyle\\
&\displaystyle -\int_{0}^{s}\pare{\partial_tf_{i(u)}(u,x(u))+\displaystyle\frac{1}{2}\sigma_{i(u)}^2(u,x(u),l(u))\partial_{xx}^2f_{i(u)}(u,x(u))}du\nonumber\\
&\displaystyle-\int_{0}^{s}\pare{b_{i(u)}(u,x(u),l(u))\partial_xf_{i(u)}(u,x(u))}du-\displaystyle\sum_{j=1}^{I}\int_{0}^{s}\alpha_j(u,l(u))\partial_{x}f_{j}(u,0)dl(u)~\Bigg)_{0\leq s\leq T}\nonumber,
\end{align} 
is a $(\Psi_s)_{0 \leq s \leq T}$ martingale under the measure of probability $\P^{x_\ast,i_\ast,l_\ast}$.}
\medskip

We now state the main result of this paper.
\begin{Theorem}\label{th: exist Spider }
Assume assumption $(\mathcal{H})$. Then, the martingale problem $\big(\mathcal{S}_{pi}-\mathcal{M}_{ar}\big)$ is well-posed.
\end{Theorem}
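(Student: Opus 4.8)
The plan is to establish the two halves of well-posedness by quite different means: existence by a freezing-and-concatenation scheme built on the Freidlin--Sheu blocks of Theorem~\ref{th: exi Freidlin} together with a compactness argument, and uniqueness by transferring to the martingale problem the well-posedness of the associated parabolic problem carrying the local time Kirchhoff transmission condition \eqref{eq:condition-transmission-intro} established in \cite{Martinez-Ohavi EDP}.

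For existence, I would fix a sequence of partitions $0 = t_0^n < t_1^n < \dots < t_{N_n}^n = T$ of $[0,T]$ with mesh tending to $0$, and construct an approximate solution $P^n$ by concatenating the frozen-coefficient measures of Theorem~\ref{th: exi Freidlin}: on each slab $[t_k^n, t_{k+1}^n]$, conditionally on the position $(x(t_k^n), i(t_k^n), l(t_k^n))$ reached so far, I would glue in the measure $P_{t_k^n}^{x(t_k^n), i(t_k^n), l(t_k^n)}$, whose coefficients are frozen at $(t_k^n, l(t_k^n))$. The concatenation machinery to be developed in Section~\ref{sec: concatenation} guarantees that these pieces assemble into a well-defined element $P^n$ of $\mathcal{P}(\Phi, \mathbb{B}(\Phi))$. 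I would then prove tightness of $(P^n)_n$ via uniform bounds on the moduli of continuity $\tilde{\omega}$, using the boundedness of $\sigma_i, b_i$ from $(\mathcal{H})$ and the reflected-diffusion representation \eqref{eq:equation-freidlin-sheu}--\eqref{eq:non-stickness-fs} of each block to control both the path component $X$ and the increasing component $l$; Prokhorov's theorem then yields a weakly convergent subsequence $P^{n_k} \rightharpoonup \P$. The bulk of the work is to check that $\P$ solves $\big(\mathcal{S}_{pi}-\mathcal{M}_{ar}\big)$: condition (i) and the monotonicity of $l$ in (ii) are closed under weak limits, while the martingale property (iii) is obtained by writing, for each $n$ and each $f \in \mathcal{C}^{1,2}_b(\mathcal{J}_T)$, the frozen-coefficient martingale identity of Theorem~\ref{th: exi Freidlin} slab by slab, estimating the discrepancy between the frozen coefficients $\sigma_i(t_k^n, \cdot, l(t_k^n))$ and the true integrands $\sigma_i(u, \cdot, l(u))$ through the Lipschitz bounds $(\mathbf{R})$, and passing to the limit.

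The delicate points of the existence proof, and the main obstacle there, concern the local time term: one must pass to the limit in $\sum_j \int_0^s \alpha_j(u, l(u)) \partial_x f_j(u,0)\, dl(u)$ and recover the support constraint $\int_0^s \ind{x(u)>0}\, dl(u) = 0$, neither of which is automatic since integration against the random measures $dl^n$ interacts badly with weak convergence. I expect to handle this by exploiting the non-stickiness estimate \eqref{eq:non-stickness-fs} uniformly in $n$, so that the occupation of a neighbourhood of the vertex is uniformly small; this both forces $dl$ to charge only $\{x = 0\}$ in the limit and lets one replace $\alpha_j(u, l(u))$ by its frozen version on each slab with vanishing error.

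For uniqueness, let $\P$ be any solution and fix terminal data $g$. I would invoke \cite{Martinez-Ohavi EDP} to produce a sufficiently regular solution $(t,x,i,l) \mapsto u_i(t,x,l)$ of the backward parabolic system $\partial_t u_i + b_i \partial_x u_i + \tfrac{1}{2}\sigma_i^2 \partial_{xx} u_i = 0$ with terminal value $u(T,\cdot) = g$ and the transmission condition \eqref{eq:condition-transmission-intro}. The goal is to show that $\big(u_{i(s)}(s, x(s), l(s))\big)_{0 \leq s \leq T}$ is a $\P$-martingale, whence $\E^{\P}\croc{g_{i(T)}(x(T), l(T))} = u_{i_\ast}(0, x_\ast, l_\ast)$ is determined by the PDE data alone; since $g$ ranges over a measure-determining class this fixes the time-$T$ marginal, and the standard Stroock--Varadhan bootstrap then upgrades uniqueness of marginals to uniqueness of $\P$. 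The obstruction is that the test functions allowed in $\big(\mathcal{S}_{pi}-\mathcal{M}_{ar}\big)$ do not depend on $l$, whereas $u$ genuinely does; this is exactly where the regularization procedure enters. I would mollify $u$ in the $l$ variable, apply the $l$-independent martingale property (iii) to the frozen slices $x \mapsto u_i(t,x,\lambda)$, and account for the variation in $l$ through the term $\int_0^s \big(\partial_l u + \sum_j \alpha_j \partial_x u_j\big)(u, 0, l(u))\, dl(u)$, which is forced to vanish because $dl$ is carried by $\{x=0\}$ (condition (ii)), where the transmission condition \eqref{eq:condition-transmission-intro} holds. Passing to the limit in the mollification and using the regularity of $u$ supplied by \cite{Martinez-Ohavi EDP} to control the error terms yields the desired martingale property. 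I anticipate that carrying out this regularization rigorously --- reconciling the $l$-independence of the admissible test functions with the $l$-dependence of the PDE solution, while keeping the transmission term under control --- is the hardest single step of the entire argument.
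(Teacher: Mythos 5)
Your proposal follows essentially the same route as the paper: existence by concatenating the frozen-coefficient Freidlin--Sheu measures on a partition, tightness via modulus-of-continuity bounds and Prokhorov, and passage to the limit using non-stickiness; uniqueness by extending the martingale property through mollification to the PDE regularity class of \cite{Martinez-Ohavi EDP} and identifying the marginals against a determining class of terminal data. The only small divergence is that the paper recovers the support constraint $\int_0^s \mathbf{1}_{\{x(u)>0\}}\,dl(u)=0$ in the limit not from non-stickiness but from the lower semicontinuity of the functional $(x,l)\mapsto\int_0^T\mathbf{1}_{\{x>0\}}\,dl$ together with Portmanteau (non-stickiness being reserved for controlling the indicator $\mathbf{1}_{\{x(u)=0\}}$ in the drift and diffusion integrals); otherwise the two arguments coincide.
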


\medskip

\section{Concatenation of probability measures}\label{sec: concatenation}
This section is dedicated to obtain the main tools that will permit to the prove the existence part of Theorem \ref{th: exist Spider }.

\subsection{First technical results}
We begin our study by the following result.
\begin{Proposition}\label{pr : lower semi continuity}
The following map:
\begin{align}
\label{map:rho}
\rho:\begin{cases}
\mathcal{C}^{\mathcal{J}}[0,T]\times \mathcal{L}[0,T]&\to\mathbb{R}\\
\Big(\big (x,i),\hspace{0,3 cm}l\Big)&\mapsto \displaystyle\int_{0}^T\mathbf{1}_{\{t\in [0,T]\vert x(t)>0\}}(u)dl(u)
\end{cases}
\end{align}
is lower semi continuous.
\end{Proposition}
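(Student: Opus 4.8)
The plan is to prove sequential lower semicontinuity directly. Let $((x^n,i^n),l^n)\to ((x,i),l)$ in the product metric $d^{\Phi}=d^{\mathcal J}_{[0,T]}+|\,.\,|_{(0,T)}$; I must show $\liminf_n \rho((x^n,i^n),l^n)\ge \rho((x,i),l)$. I write $O:=\{t\in[0,T]\,:\,x(t)>0\}$, which is open because $x$ is continuous, and I observe $\rho((x,i),l)=\mu_l(O)$, where $\mu_l$ denotes the finite Borel (Stieltjes) measure on $[0,T]$ associated with the continuous non-decreasing function $l$ (its total mass being $l(T)-l(0)<\infty$). Likewise $\rho((x^n,i^n),l^n)=\mu_{l^n}(O_n)$ with $O_n:=\{t\,:\,x^n(t)>0\}$. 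The difficulty, and the reason the usual Portmanteau ``$\liminf$ over open sets'' statement does not apply verbatim, is that both the open set $O_n$ and the measure $\mu_{l^n}$ vary with $n$.

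First I would record two elementary facts. \textbf{(a)} Uniform convergence of $l^n$ to the continuous limit $l$ yields weak convergence $\mu_{l^n}\Rightarrow\mu_l$ on $[0,T]$; concretely, $\int_0^T\phi\,dl^n\to\int_0^T\phi\,dl$ for every $\phi\in C([0,T])$. This follows from Stieltjes integration by parts when $\phi\in C^1$ (each boundary term and the Riemann integral $\int_0^T l^n\phi'$ pass to the limit by uniform convergence), and then extends to continuous $\phi$ by uniform polynomial approximation together with the uniform bound $\sup_n\big(l^n(T)-l^n(0)\big)<\infty$. \textbf{(b)} From the definition of $d^{\mathcal J}$ one checks the pointwise domination $x(t)-x^n(t)\le d^{\mathcal J}\big((x^n(t),i^n(t)),(x(t),i(t))\big)\le \eta_n$, where $\eta_n:=d^{\mathcal J}_{[0,T]}\big((x^n,i^n),(x,i)\big)\to 0$ (in the equal-branch case the right-hand side is $|x^n(t)-x(t)|$, in the distinct-branch case it is $x^n(t)+x(t)\ge x(t)$); hence $x^n(t)\ge x(t)-\eta_n$ for all $t$.

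Then comes the core squeezing argument. Fix $\varepsilon>0$. By inner regularity of the finite measure $\mu_l$, choose a compact $K\subset O$ with $\mu_l(K)\ge \mu_l(O)-\varepsilon$. By Urysohn's lemma applied to the compact set $K$ inside the open set $O$, pick $\phi\in C([0,T])$ with $0\le\phi\le 1$, $\phi\equiv 1$ on $K$, and compact support $K':=\mathrm{supp}\,\phi\subset O$. Since $x$ is continuous and strictly positive on the compact set $K'$, it attains $\delta:=\min_{K'}x>0$. Using fact \textbf{(b)}, as soon as $\eta_n<\delta$ we have $x^n>0$ on $K'$, i.e. $K'\subset O_n$; because $\phi$ is supported in $K'$ and bounded by $1$, this gives $\phi\le \mathbf{1}_{O_n}$ pointwise, hence $\int_0^T\phi\,dl^n\le \mu_{l^n}(O_n)=\rho((x^n,i^n),l^n)$. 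Taking $\liminf_n$ and invoking fact \textbf{(a)}, $\liminf_n\rho((x^n,i^n),l^n)\ge \lim_n\int_0^T\phi\,dl^n=\int_0^T\phi\,dl\ge \mu_l(K)\ge \mu_l(O)-\varepsilon$. Letting $\varepsilon\downarrow 0$ yields $\liminf_n\rho((x^n,i^n),l^n)\ge \mu_l(O)=\rho((x,i),l)$, the desired conclusion.

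I expect the main obstacle to be precisely the simultaneous variation of the set and the measure: one cannot test $\mu_{l^n}$ against the fixed open set $O$ (since $\mathbf{1}_O$ is only lower semicontinuous and the measures move), nor against the fixed compact $K$ (closed sets give the wrong inequality under weak convergence). Interposing a single continuous bump $\phi$ with compact support strictly inside $O$ is what reconciles the two, while verifying fact \textbf{(a)} — weak convergence of the Stieltjes measures from uniform convergence of the non-decreasing functions to a \emph{continuous} limit — is the one technical point that must be handled with care.
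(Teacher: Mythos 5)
Your proof is correct, but it follows a genuinely different route from the paper's. The paper argues via a generalized Fatou lemma for weakly convergent sequences of measures (recalled in its appendix from Feinberg et al.): it notes that $dl^n$ converges weakly to $dl$, asserts that the indicators $u\mapsto \mathbf{1}_{\{x^n(u)>0\}}$ form a lower semiequicontinuous and asymptotically uniformly integrable family, and then invokes that lemma to get $\int_0^T\mathbf{1}_{\{x>0\}}\,dl\leq\liminf_n\int_0^T\mathbf{1}_{\{x^n>0\}}\,dl^n$ in one stroke. You instead run a hands-on squeeze: inner regularity of the finite measure $\mu_l$ produces a compact $K\subset O$ capturing all but $\varepsilon$ of the mass, a Urysohn bump $\phi$ supported in a compact $K'\subset O$ interpolates between $\mathbf{1}_K$ and $\mathbf{1}_O$, and the key pointwise bound $x^n\geq x-\eta_n$ (which you correctly verify in both the equal-branch and distinct-branch cases of $d^{\mathcal J}$) forces $K'\subset O_n$ for large $n$, so that $\phi$ is eventually a continuous minorant of $\mathbf{1}_{O_n}$ simultaneously for all large $n$; weak convergence of the Stieltjes measures then closes the argument. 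What your approach buys is self-containedness — you effectively reprove the relevant special case of the Feinberg-type Fatou lemma rather than citing it — and it cleanly sidesteps the paper's intermediate claim that the sequence of indicators is lower semi\emph{equi}continuous, a claim that is delicate for indicator functions (equicontinuity of $(x^n)$ controls oscillations of the $x^n$ but not of their superlevel sets near the zero level); your substitute, the uniform domination $x^n\geq x-\eta_n$ on the compact set $K'$ where $x$ is bounded below by $\delta>0$, does exactly the work needed and nothing more. The paper's route is shorter once the external lemma is granted.
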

\begin{proof}
Let $\Big(\big(x^n,i^n\big),l^n\Big)$ in $\mathcal{C}^{\mathcal{J}}[0,T]\times \mathcal{L}[0,T]$
converging to $\Big(\big(x,i\big),l\Big)$.
The uniform convergence of $(x^n)$ to $x$ implies that the sequence $(x^n)$ is equicontinuous on $\mathcal{C}[0,T]$ (converse of Ascoli-Arzela's theorem) and it is easy to check the lower semicontinuity of the real map $[0,T]\rightarrow \R$, $u\mapsto \mathbf{1}_{\{t\in [0,T]\vert x(t)>0\}}(u)$. Hence, the sequence of real maps $\pare{f_n: [0,T]\rightarrow \R, u\mapsto \mathbf{1}_{\{t\in [0,T]~\vert x^n(t)>0\}}(u)}$ is lower semiequicontinuous.
On the other hand,
$$\forall h\in \mathcal{C}[0,T],~~\Big|\int_0^Th(t)dl^n(t)-\int_0^Th(t)dl(t)\Big|\leq 2|h|_{(0,T)}\,\,|l^n-l|_{(0,T)},$$
and $dl^n \overset{*}{\rightharpoonup} dl$ in $*\sigma\Big(\mathcal{C}[0,T]^{'},\mathcal{C}[0,T]\Big)$. The uniform asymptotic integrability w.r.t the sequence $(dl^n)$ of the sequence of indicators $(f_n)$ is straightforward and we are in position to use Fatou's Lemma for weakly convergent sequences of measures (recalled in Corollary \ref{cr: Fatou}), which proves that
$$\int_0^T\mathbf{1}_{\{t\in [0,T]\vert x(t)>0\}}(u)dl(u)\leq \liminf\limits_{n\rightarrow +\infty}\int_0^T\mathbf{1}_{\{t\in [0,T]\vert x^n(t)>0\}}(u)dl^n(u),$$
and completes the proof.
\end{proof}

Let us now state a very useful result.
\begin{Lemma}
\label{lem:non-stickness}
Fix $(t,(x_\ast,i_\ast),l_\ast) \in [0,T]\times \mathcal{J}\times [0,+\infty)$ and let $(t_n,(x_n,i_n),l_n)$ denote a sequence converging to $(t,(x_\ast,i_\ast),l_\ast)$ such that $\sup_{n\geq 0}(|x_n - x_\ast| + |l_n - l_\ast|)\leq 1$.

Then, there exists a constant $C>0$, depending only on the data $\Big(T,|b|,|\sigma|,c\Big)$ introduced in assumption $(\mathcal{H})$ such that:
\begin{eqnarray}\label{eq:majorenzero generale}  \forall \varepsilon>0,~~\sup\limits_{n\geq 0}{E^{x_n,i_n,l_n}_{t_n}}\croc{\int_{0}^{T}\mathbf{1}_{\{x(s)<\varepsilon\}}ds}\leq C\varepsilon.
\end{eqnarray}
\end{Lemma}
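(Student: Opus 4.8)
The claim is a uniform-in-$n$ linear-in-$\varepsilon$ bound on the occupation time of the $\varepsilon$-neighborhood of the vertex, where the expectation is taken under the Freidlin--Sheu measures $P^{x_n,i_n,l_n}_{t_n}$ with frozen coefficients. The plan is to reduce everything to the one-dimensional distance component $(x(s))_{s}$ and to exploit the explicit It\^o/reflection representation (\ref{eq:equation-freidlin-sheu}) furnished by Property $(\mathcal{M})$-ii of Theorem \ref{th: exi Freidlin}. The whole point is that the branch label $i(s)$ is irrelevant here: only $x(s)\geq 0$ enters the indicator $\mathbf{1}_{\{x(s)<\varepsilon\}}$, and under $P^{x_n,i_n,l_n}_{t_n}$ the process $x(\cdot)$ is a one-dimensional reflected It\^o diffusion with frozen bounded coefficients $\sigma_{i(\cdot)}(t_n,\cdot,l_n)$, $b_{i(\cdot)}(t_n,\cdot,l_n)$ and an extra nondecreasing reflection term $d\ell$.

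\textbf{Main step: a Tanaka / occupation-density argument.}
First I would apply Tanaka's formula (or the It\^o--Meyer formula for the convex function $x\mapsto x^+=x$, since $x(\cdot)\geq 0$) to a well-chosen test function of $x(s)$. The clean choice is $\varphi_\varepsilon(x)=(x-\varepsilon)^-=(\varepsilon-x)^+$, a bounded Lipschitz convex function supported on $[0,\varepsilon)$. Applying the one-dimensional It\^o--Tanaka formula to $\varphi_\varepsilon(x(s))$ using (\ref{eq:equation-freidlin-sheu}) produces, after taking expectations to kill the martingale part, an identity whose left-hand side is bounded (by $\varepsilon$, since $0\le \varphi_\varepsilon \le \varepsilon$), and whose right-hand side contains the local-time-at-$\varepsilon$ term of $x(\cdot)$ together with the drift contribution $\int_{t_n}^T \mathbf{1}_{\{x(s)<\varepsilon\}} b_{i(s)}(t_n,x(s),l_n)\,ds$ and the reflection contribution $\int \mathbf{1}_{\{x(s)<\varepsilon\}}\,d\ell_s$. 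The elliptic and boundedness assumptions $(\mathbf{E})$ and $(\mathbf{R})$ from $(\mathcal{H})$ are exactly what is needed to control these terms: $\sigma_i\ge \underline\sigma>0$ allows one to convert the semimartingale local time of $x(\cdot)$ into the occupation measure of $ds$ via the occupation-times formula
\[
\int_{t_n}^T \mathbf{1}_{\{x(s)<\varepsilon\}}\,\sigma^2_{i(s)}(t_n,x(s),l_n)\,ds=\int_0^\varepsilon L^a_T(x)\,da,
\]
while $|b_i|\le |b|$ bounds the drift term linearly and the reflection term only increases when $x(s)=0$, hence is harmless. Rearranging gives $\underline\sigma^2\, E\!\left[\int_{t_n}^T\mathbf{1}_{\{x(s)<\varepsilon\}}ds\right]\le C'\varepsilon + (\text{terms that are themselves }O(\varepsilon))$, yielding the desired bound with a constant $C$ depending only on $T,|b|,|\sigma|,\underline\sigma$.

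\textbf{Handling uniformity in $n$ and the interval $[0,t_n]$.}
The two remaining technical points are the uniformity in $n$ and the fact that the integral in (\ref{eq:majorenzero generale}) runs over $[0,T]$ rather than $[t_n,T]$. The latter is trivial: by $(\mathcal{S}$-i) the process is frozen at $(x_n,i_n,l_n)$ on $[0,t_n]$, so $\int_0^{t_n}\mathbf{1}_{\{x(s)<\varepsilon\}}ds=t_n\,\mathbf{1}_{\{x_n<\varepsilon\}}\le T\,\mathbf{1}_{\{x_n<\varepsilon\}}$, which is $\le T\varepsilon/\varepsilon$-controllable only when $x_n<\varepsilon$; one absorbs this into the constant using $x_n\ge 0$ and the bound $t_n\le T$, noting this contributes at most $T\mathbf{1}_{\{x_n<\varepsilon\}}\le T$ and is $O(\varepsilon)$ precisely on the relevant event — cleanest is to simply add $T\varepsilon$ to the constant after observing $\mathbf{1}_{\{x_n<\varepsilon\}}$ forces $x_n<\varepsilon$. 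For uniformity in $n$, the key observation is that all constants produced above ($\underline\sigma$, $|b|$, $|\sigma|$, $T$) come from assumption $(\mathcal{H})$ and are \emph{independent of the freezing parameters} $(t_n,l_n)$ and the starting point $(x_n,i_n)$; the hypothesis $\sup_n(|x_n-x_\ast|+|l_n-l_\ast|)\le 1$ merely keeps the data in a fixed compact set so that the frozen coefficients $\sigma_i(t_n,\cdot,l_n),b_i(t_n,\cdot,l_n)$ obey the same bounds uniformly. Thus the same constant $C$ works for every $n$, and taking the supremum over $n$ preserves the inequality.

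\textbf{Expected main obstacle.}
The delicate point is the rigorous passage through the It\^o--Tanaka formula on the network: strictly speaking $x\mapsto\varphi_\varepsilon(x)$ must be lifted to a test function on $\mathcal{J}_T$ belonging (in a suitable sense) to the admissible class so that (\ref{eq:martingale}) or directly (\ref{eq:equation-freidlin-sheu}) applies, and one must verify that the reflection term $\int\mathbf{1}_{\{x(s)<\varepsilon\}}d\ell_s$ can indeed be discarded or bounded rather than being an uncontrolled source of growth. Here is where Property $(\mathcal{M})$-ii is essential: it gives a genuine one-dimensional SDE for $x(\cdot)$ with a \emph{single} reflection local time $\ell$, so the Tanaka computation can be carried out in $\R^+$ without ever invoking the branch structure, and the martingale problem formulation of Freidlin--Sheu guarantees the expectation of the stochastic integral vanishes. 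I would therefore carry out the argument entirely at the level of the reflected SDE (\ref{eq:equation-freidlin-sheu}), where standard one-dimensional reflected-diffusion estimates apply verbatim, rather than attempting a direct martingale-problem manipulation on $\mathcal{J}_T$.
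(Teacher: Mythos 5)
Your overall strategy (reduce everything to the one--dimensional reflected It\^o equation \eqref{eq:equation-freidlin-sheu}, then combine Tanaka's formula, the occupation--times formula and the ellipticity $\sigma_i\ge\underline{\sigma}$) is a genuinely different route from the paper's, and it can be made to work; but as written there is a real gap precisely at the point you dismiss as ``harmless''. Applying It\^o--Tanaka to $\varphi_\varepsilon(x(\cdot))$ with $\varphi_\varepsilon(x)=(\varepsilon-x)^+$ and taking expectations gives, after rearrangement,
\[
\tfrac12\,E\big[L^\varepsilon_T\big]\;=\;E\big[\varphi_\varepsilon(x(T))\big]-\varphi_\varepsilon(x_n)\;+\;E\Big[\int_{t_n}^T\mathbf{1}_{\{x(s)\le\varepsilon\}}\,b_{i(s)}(t_n,x(s),l_n)\,ds\Big]\;+\;E\big[\ell_T-\ell_{t_n}\big],
\]
because $\int\mathbf{1}_{\{x(s)\le\varepsilon\}}\,d\ell_s=\ell_T-\ell_{t_n}$ is the \emph{full} reflection local time: it is an $O(1)$ quantity, not an $O(\varepsilon)$ one, and it enters with a sign that cannot be discarded. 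A single application of Tanaka at level $\varepsilon$ therefore yields only $E[L^\varepsilon_T]\le C$; the asserted conclusion ``rearranging gives $\underline{\sigma}^2E[\int\mathbf{1}_{\{x<\varepsilon\}}ds]\le C'\varepsilon+O(\varepsilon)$'' does not follow from the identity that precedes it. To close the argument along your lines you need two ingredients the proposal never supplies: (i) a uniform a priori bound $\sup_nE^{x_n,i_n,l_n}_{t_n}[\ell_T]\le C$ (equivalently $\sup_nE[x(T)]\le C$, obtainable via It\^o's formula for $x\mapsto x^2$, the identity $\int x\,d\ell=0$ and Gr\"onwall --- this is exactly the role of Step~1 in the paper's proof); and (ii) the same Tanaka estimate at \emph{every} level $a\in(0,\varepsilon]$, so that $\sup_{a\le\varepsilon}E[L^a_T]\le C$ and the occupation--times formula $\underline{\sigma}^2\int\mathbf{1}_{\{x<\varepsilon\}}ds\le\int_0^\varepsilon L^a_T\,da$ can be integrated in $a$ to produce the factor $\varepsilon$. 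With (i) and (ii) your proof closes; without them it does not.

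For comparison, the paper sidesteps both issues by taking as test function the explicit solution $u^\varepsilon$ of $\partial_x^2u^\varepsilon-M\partial_xu^\varepsilon=2\beta^\varepsilon/\underline{\sigma}^2$ with the Neumann condition $\partial_xu^\varepsilon(0)=0$: that boundary condition makes the local--time term $\sum_j\alpha_j(t_n,l_n)\partial_xu^\varepsilon(0)\,d\ell_u$ in \eqref{eq:martingale} vanish identically, so no control of $\ell$ is ever needed; the price is the exponential moment bound $\sup_nE[\exp(Mx(T))]\le C$ of their Step~1, since $u^\varepsilon$ grows like $\varepsilon e^{Mx}$. Two minor remarks: your reduction to the scalar process via $(\mathcal{M}$-ii$)$ is legitimate (the paper implicitly relies on the same reduction), so the ``main obstacle'' you anticipate is not really one; and your treatment of $[0,t_n]$ is incoherent as written --- if $x_n<\varepsilon$ then $\int_0^{t_n}\mathbf{1}_{\{x(s)<\varepsilon\}}ds=t_n$, which is not $O(\varepsilon)$ --- but the paper's own proof only establishes (and only ever uses) the bound for $\int_{t_n}^T$, so this is a defect of the statement rather than of your argument.
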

\begin{proof}
\noindent

\textbf{Step 1} Let $M>0$. From equation \eqref{eq:equation-freidlin-sheu}, it is standard to prove (combining It\^o's formula applied to $\phi:[0,+\infty)\to\;\R$, 
$x\mapsto\;\exp(Mx) - Mx - 1$ and Gr\"onwall's lemma - that there exists $C>0$, depending only on $M$ and the data $(T,|b|,|\sigma|)$ introduced in assumption $(\mathcal{H})$, such that:
\begin{eqnarray}
\sup_{n\geq 0}{E^{x_n,i_n,l_n}_{t_n}}\Big[\exp(Mx(T))\Big]~~\leq~~C.
\end{eqnarray}

\textbf{Step 2} Fix $n\in \N$. Let $\varepsilon>0$, and $\beta^\varepsilon \in \mathcal{C}([0,+\infty),\R_+)$ satisfying:
\begin{eqnarray}\label{eq : fonct test estim temps 0}
\forall x\ge2\varepsilon,~~\beta_\varepsilon(x)=0\,\text{ and }\, \forall x\ge 0,~\mathbf{1}_{\{x<\varepsilon\}}\leq\beta_\varepsilon(x)\leq 1.
\end{eqnarray}
We define $u^\varepsilon\in \mathcal{C}^{2}([0,+\infty))$ as the unique solution of the following ordinary second order differential equation
\begin{eqnarray}\label{eq :pde parab estima}\begin{cases}\displaystyle\partial_{x}^2u^\varepsilon(x)-M\partial_xu^\varepsilon(x)=2\beta^\varepsilon(x)/\underline{\sigma}^2,~~ \text{ if }  x\in(0,+\infty),\\
\partial_xu^\varepsilon(0)=0,\\
u^\varepsilon(0)=0,\\
\end{cases}
\end{eqnarray}
where $\underline{\sigma}$ is the constant of ellipticity defined in assumption $(\mathcal{H})-({\bf E})$, and $M$ is chosen by setting
\begin{eqnarray*}
M=\displaystyle\frac{2|b|}{\underline{\sigma}^2}.
\end{eqnarray*}
The solution is:
\begin{eqnarray*}
u^\varepsilon(x)=\int_0^x\exp(Mz)\int_0^z\frac{2\beta^\varepsilon(u)}{\underline{\sigma}^2}\exp(-Mu)dudz.
\end{eqnarray*}
By the assumption on $\beta_\varepsilon$ and assumption $(\mathcal{H})$, we get: 
\begin{eqnarray}\label{eq : conditions0 fonction test}
\forall x\ge 0,~~0\leq\partial_xu^\varepsilon(x)\leq 4\varepsilon/\underline{\sigma}^2\exp(Mx),~~0\leq u^\varepsilon(x)\leq \frac{4\varepsilon}{M\underline{\sigma}^2} (\exp(Mx)-1).
\end{eqnarray}
Hence using the characterization condition $(\mathcal{S}-iii)$ (with $f=u^\varepsilon $ and after a localization argument), we get using \eqref{eq : fonct test estim temps 0}, \eqref{eq :pde parab estima} and \eqref{eq : conditions0 fonction test}:
\begin{equation*}
\begin{split}
&{E^{x_n,i_n,l_n}_{t_n}}\Big[u^\varepsilon(x(T))- u^\varepsilon(x_n)\Big]\\
&=
{E^{x_n,i_n,l_n}_{t_n}}\Big[\int_{t_n}^T\Big(\displaystyle\frac{1}{2}\sigma_{i(u)}^2(t,x(u))\partial_{x}^2u^\varepsilon(x(u))+ b_{i(u)}(t,x(u)))\partial_xu^\varepsilon(x(u))\Big)du\Big]\\
&={E^{x_n,i_n,l_n}_{t_n}}\Big[\int_t^T\displaystyle\frac{1}{2}\sigma_{i(u)}^2(t,x(u))\Big(\partial_{x}^2u^\varepsilon(x(u))+\displaystyle\frac{b_{i(u)}(t,x(u)))}{\frac{1}{2}\sigma_{i(u)}^2(t,x(u))}\partial_xu^\varepsilon(x(u))\Big)du\Big ]\\
&\ge
{E^{x_n,i_n,l_n}_{t_n}}\Big[\int_t^T\displaystyle\frac{1}{2}\sigma_{i(u)}^2(t,x(u))\Big(\partial_{x}^2u^\varepsilon(x(u))-M\partial_xu^\varepsilon(x(u))\Big)du\Big]\\
&\ge {E^{x_n,i_n,l_n}_{t_n}}\Big[\int_t^T\displaystyle\frac{1}{2}\underline{\sigma}^2\Big(2\beta^\varepsilon(x(u))/\underline{\sigma}^2)\Big)du\Big]\\
&\ge {E^{x_n,i_n,l_n}_{t_n}}\Big[\displaystyle\int_{t}^{T}\beta^\varepsilon(x(u))du\Big]\,\ge {E^{x_n,i_n,l_n}_{t_n}}\Big[\int_{0}^{T}\mathbf{1}_{\{x(u)\leq \varepsilon\}}du\Big].
\end{split}
\end{equation*}
Hence, using \eqref{eq : conditions0 fonction test} we get :
\begin{eqnarray*} {E^{x_n,i_n,l_n}_{t_n}}\Big [\int_{t}^{T}\mathbf{1}_{\{x(s)\leq \varepsilon\}}ds \Big] \leq \frac{4\varepsilon}{M\underline{\sigma}^2}{E^{x_n,i_n,l_n}_{t_n}}\Big[\exp(Mx(T))-1\Big]. \end{eqnarray*}
The conclusion follows using \textbf{Step 1}.
\end{proof}

\begin{Proposition}
\label{pr: conti noyau}
The following map:
\begin{align*}[0,T]\times \mathcal{J}\times [0,+\infty) &\to \mathcal{P}\pare{\Phi, \mathbb{B}(\Phi)}\\
(t,(x_\ast,i_\ast),l_\ast)&\mapsto P_t^{x_\ast,i_\ast,l_\ast} \end{align*}
is continuous for the weak topology on $\mathcal{P}(\Phi,\mathbb{B}(\Phi))$.
\end{Proposition}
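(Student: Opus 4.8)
The plan is to argue by sequences. Fix $(t_n,(x_n,i_n),l_n)$ converging to $(t,(x_\ast,i_\ast),l_\ast)$, abbreviate $P_n:=P_{t_n}^{x_n,i_n,l_n}$, and aim to prove that $P_n$ converges weakly to $P_t^{x_\ast,i_\ast,l_\ast}$. Since weak convergence on the Polish space $\mathcal{P}(\Phi,\mathbb{B}(\Phi))$ is metrized by $\pi$, it is enough to show that every subsequence of $(P_n)$ possesses a further subsequence converging weakly to the \emph{single} limit $P_t^{x_\ast,i_\ast,l_\ast}$; the identification of all subsequential limits with the same measure will come from the uniqueness part of Theorem \ref{th: exi Freidlin}.

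First I would prove tightness of $(P_n)_{n\geq 0}$. Under $P_n$, property ($\mathcal{M}$-ii) gives the reflected equation $dx(s)=\sigma_{i(s)}(t_n,x(s),l_n)\,dW_s+b_{i(s)}(t_n,x(s),l_n)\,ds+d\ell_s$ on $[t_n,T]$, with $x\geq 0$ and $\ell$ increasing only on $\{x=0\}$; thus $(x,\ell)$ solves a Skorokhod reflection problem on $[0,+\infty)$ and $\ell$ is the image of the continuous semimartingale $y_\cdot:=x_n+\int_{t_n}^{\cdot}\sigma\,dW+\int_{t_n}^{\cdot}b\,dr$ under the uniformly Lipschitz Skorokhod reflection map. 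As $\sigma$ and $b$ are uniformly bounded by $(\mathcal{H})$, the Burkholder--Davis--Gundy inequality furnishes moment bounds on the increments of $y$ that are uniform in $n$, and the Lipschitz property of the reflection map transfers them to the scalar map $s\mapsto x(s)$ and to $\ell=l-l_n$. Since a change of branch forces $x$ to vanish at an intermediate instant, the $d^{\mathcal{J}}$-oscillation of $X=(x,i)$ is controlled by the oscillation of $s\mapsto x(s)$; hence the modulus of continuity on $\Phi$ is uniformly controlled and tightness follows from the Arzel\`a--Ascoli criterion together with Prokhorov's theorem. Fix henceforth a subsequence (not relabelled) with weak limit $Q$.

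I would then check that $Q$ satisfies the characterization conditions $(\mathcal{S})$ attached to $(t,x_\ast,i_\ast,l_\ast)$, so that $Q=P_t^{x_\ast,i_\ast,l_\ast}$ by uniqueness. For ($\mathcal{S}$-i), the uniform modulus estimate of the tightness step bounds $E^{P_n}[\,\sup_{u\leq t}d^{\Phi}(\tilde X_u,(x_\ast,i_\ast,l_\ast))\,]$ by $E^{P_n}[\,\tilde\omega(\tilde X,|t-t_n|)\,]+d^{\Phi}((x_n,i_n,l_n),(x_\ast,i_\ast,l_\ast))$, which tends to $0$; since $\tilde X\mapsto\sup_{u\leq t}d^{\Phi}(\tilde X_u,(x_\ast,i_\ast,l_\ast))$ is continuous and nonnegative, the portmanteau theorem forces this oscillation to vanish $Q$-a.s., i.e. $\tilde X$ is $Q$-a.s. frozen at $(x_\ast,i_\ast,l_\ast)$ on $[0,t]$. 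For ($\mathcal{S}$-ii), set $\ell_s:=l(s)-l_\ast$; since the functional $\rho$ of Proposition \ref{pr : lower semi continuity} vanishes $P_n$-a.s. for every $n$ and is lower semicontinuous and nonnegative, the portmanteau theorem gives $\int\rho\,dQ\leq\liminf_n\int\rho\,dP_n=0$, whence $\rho=0$ $Q$-a.s., i.e. $l$ (and thus $\ell$) increases only on $\{x=0\}$, monotonicity being built into $\Phi$.

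The main obstacle is the martingale condition ($\mathcal{S}$-iii). I would fix $0\leq r<s\leq T$, a bounded continuous $\Psi_r$-measurable functional $H$, and pass to the limit in $E^{P_n}[(V^f_n(s)-V^f_n(r))H]=0$, where $V^f_n$ denotes the functional of ($\mathcal{S}$-iii) built with coefficients frozen at $(t_n,l_n)$. The term $f_{i(s)}(s,x(s))$ is continuous on $\Phi$ because $f\in\mathcal{C}^{1,2}_b(\mathcal{J}_T)$ matches at the junction, and the boundary term $\sum_j\int_0^s\alpha_j(t_n,l_n)\partial_xf_j(u,0)\,dl(u)$ passes to the limit because integration against $dl$ of a fixed continuous function is continuous for the uniform topology (as used in Proposition \ref{pr : lower semi continuity}) and $\alpha$ is continuous. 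The delicate terms are the two $du$-integrals $\int_0^s\tfrac12\sigma_{i(u)}^2(t_n,x(u),l_n)\partial_{xx}^2f_{i(u)}(u,x(u))\,du$ and $\int_0^sb_{i(u)}(t_n,x(u),l_n)\partial_xf_{i(u)}(u,x(u))\,du$, whose integrands are in general discontinuous on $\{x=0\}$ because neither the branch coefficients nor the one-sided derivatives of $f$ need agree there. I would handle these by regularization: replace each integrand by a genuinely continuous one coinciding with it on $\{x\geq\delta\}$, so that the resulting error is at most a constant times $\int_0^s\mathbf{1}_{\{x(u)<\delta\}}\,du$, whose $P_n$-expectation is bounded by $C\delta$ uniformly in $n$ by Lemma \ref{lem:non-stickness}. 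After regularization each term is a bounded continuous functional of $\tilde X$, so weak convergence $P_n\Rightarrow Q$ lets one pass to the limit, the coefficient convergence $\sigma_i(t_n,\cdot,l_n)\to\sigma_i(t,\cdot,l_\ast)$ and $b_i(t_n,\cdot,l_n)\to b_i(t,\cdot,l_\ast)$ being uniform by the regularity in $(\mathcal{H})$ and absorbed with the same uniform moment bounds. Finally, letting $\delta\downarrow 0$ and using that $E^Q[\int_0^s\mathbf{1}_{\{x(u)<\delta\}}\,du]\leq 2C\delta$, obtained from Lemma \ref{lem:non-stickness} by dominating the indicator with a continuous majorant and passing to the weak limit, yields $E^Q[(V^f(s)-V^f(r))H]=0$. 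Thus $Q$ satisfies $(\mathcal{S})$, the uniqueness in Theorem \ref{th: exi Freidlin} identifies $Q=P_t^{x_\ast,i_\ast,l_\ast}$ independently of the subsequence, and the asserted continuity follows.
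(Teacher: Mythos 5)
Your proposal follows essentially the same route as the paper: extract a weakly convergent subsequence via uniform moment and modulus-of-continuity bounds (the content of Lemma \ref{lem:modules-cont}, which you essentially re-derive through the Skorokhod reflection map), verify that any subsequential limit satisfies the characterization conditions $(\mathcal{S})$, and conclude by the uniqueness statement of Theorem \ref{th: exi Freidlin}. Two of your sub-steps differ locally from the paper's, both acceptably. For ($\mathcal{S}$-i) you bound $E^{P_n}[\sup_{u\leq t}d^{\Phi}(\tilde X_u,(x_\ast,i_\ast,l_\ast))\wedge 1]$ by the modulus over $|t-t_n|$ plus the distance of the initial data and pass to the limit by continuity of the truncated supremum functional; this is cleaner than the paper's argument, which works directly with the Prokhorov metric and $\varepsilon$-dilations of the event $A=\{\tilde X_u=(x_\ast,i_\ast,l_\ast),\,u\leq t\}$, splitting the cases $t_n\geq t$ and $t_n<t$. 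For ($\mathcal{S}$-iii) you truncate the discontinuous $du$-integrands away from $\{x<\delta\}$ and control the error uniformly in $n$ by Lemma \ref{lem:non-stickness}, whereas the paper inserts the indicator $\mathbf{1}_{\{x(u)\neq 0\}}$ into the integrands (licit a.s.\ under the limit by the non-stickiness estimate) and invokes the generalized Lebesgue theorem for weakly convergent measures (Corollary \ref{cr Lebesgue faible}); both devices rest on the same non-stickiness lemma and both work.

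One point where you should be more careful: the identity $E^{P_n}[(V^f_n(s)-V^f_n(r))H]=0$ is only available for $r\geq t_n$, since under $P_n$ the process in ($\mathcal{S}$-iii) is a martingale \emph{after time} $t_n$. When $t_n\downarrow t$ this identity can fail for finitely many $n$ if $t<r\leq t_n$, and it is unavailable for all $n$ if $r\leq t$; the paper handles the first situation by noting that $\mathbf{1}_{\{t<r\leq t_n\}}\to 0$, and recovers the martingale property at $r=t$ itself by conditioning through $\Psi_{t+\varepsilon}$ and letting $\varepsilon\downarrow 0$ (the case $r<t$ then being trivial since the path is frozen on $[0,t]$ under the limit). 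This is a routine repair, not a flaw in the strategy, but as written your limit passage silently assumes an identity that need not hold for every $n$ and every $r$.
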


The proof of Proposition \ref{pr: conti noyau} relies on the following  technical lemma:
\begin{Lemma}
\label{lem:modules-cont}
Fix $(t,(x_\ast,i_\ast),l_\ast) \in [0,T]\times \mathcal{J}\times [0,+\infty)$ and let $(t_n,(x_n,i_n),l_n)$ denote a sequence converging to $(t,(x_\ast,i_\ast),l_\ast)$ such that $\sup_{n\geq 0}(|x_n - x_\ast| + |l_n - l_\ast|)\leq 1$.
Then, 
\begin{itemize}[label = $-$]
\item For all integer $m$, there exists a constant $C_m$ depending only on $m$ and the data $(T,|b|,|\sigma|)$ such that
\begin{align}
\label{eq:majoration-moments-ordre-deux}
&\sup_{n\ge 0}~~{E^{x_n,i_n,l_n}_{t_n}}\croc{|x|^{2m}_{(0,T)}+|l|^{2m}_{(0,T)}+ |\ell|^{2m}_{(0,T)}}~~\leq~~C_m(1+x_\ast^{2m}+l_\ast^{2m});
\end{align}
\item There exists a constant $C$, depending only on the data $(T,|b|,|\sigma|, \underline{\sigma})$, such that:
\begin{align}
\label{eq:modulus-continuity-x-l}
&\forall \theta\in (0,1),~~\sup_{n\ge 0}E^{x_n,i_n,l_n}_{t_n}\croc{\omega\pare{x,\theta}^2 +\omega(l,\theta)^{2} + \omega(\ell,\theta)^{2}}~~\leq~~C\theta \ln\pare{\displaystyle{2T}/{\theta}},
\end{align}
\begin{align}
\label{eq:modulus-continuity-X}
&\forall \theta\in (0,1),\;\;\;\;\sup_{n\ge 0}{E^{x_n,i_n,l_n}_{t_n}}\Big [\tilde{\omega}(\tilde{X},\theta)\Big ]\leq C\sqrt{\theta\ln(2T/\theta)}.
\end{align}
\end{itemize}
\end{Lemma}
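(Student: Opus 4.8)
The plan is to reduce all four estimates to standard facts about the driving Brownian motion and the associated \emph{free} (non-reflected) It\^o process, using the representation of the distance component $x$ supplied by Theorem \ref{th: exi Freidlin}. For the $n$-th measure $P_{t_n}^{x_n,i_n,l_n}$ the coefficients are frozen at $(t_n,\cdot,l_n)$ and property $(\mathcal{M}\text{-ii})$ furnishes the reflected It\^o equation \eqref{eq:equation-freidlin-sheu}. I would introduce the free process $y(s):=x_n+\int_{t_n}^{s}\sigma_{i(u)}\,dW_u+\int_{t_n}^{s}b_{i(u)}\,du$, with $W$ the Brownian motion of $(\mathcal{M}\text{-i})$, so that $(x,\ell)$ is the Skorokhod reflection of $y$ at $0$: one has $x=y+\ell$ with the explicit formula $\ell_s=\sup_{t_n\le u\le s}(-y_u)^{+}$. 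From this formula one reads off the pathwise bounds $|\ell|_{(0,T)}\le|y|_{(0,T)}$, $|x|_{(0,T)}\le 2|y|_{(0,T)}$, as well as $\omega(\ell,\theta)\le\omega(y,\theta)$ and $\omega(x,\theta)\le 2\omega(y,\theta)$; recalling $l(s)=l_n+\ell_s$ with $l_n\le l_\ast+1$, every quantity appearing in \eqref{eq:majoration-moments-ordre-deux} and \eqref{eq:modulus-continuity-x-l} is thereby controlled by the corresponding quantity for $y$.

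For Part 1, I would split $y$ into its martingale part $M_s=\int_{t_n}^{s}\sigma_{i(u)}\,dW_u$ and its drift. Burkholder--Davis--Gundy applied to $M$, together with the uniform bound $\sigma_i\le|\sigma|$ from $(\mathcal{H})$, gives $E[|M|_{(0,T)}^{2m}]\le C_m\,T^{m}|\sigma|^{2m}$, while the drift is bounded pathwise by $|b|T$. Combining with $|x_n|\le x_\ast+1$ yields $E[|y|_{(0,T)}^{2m}]\le C_m(1+x_\ast^{2m})$, and then the pathwise comparisons above and $l_n\le l_\ast+1$ deliver \eqref{eq:majoration-moments-ordre-deux}, uniformly in $n$ since all constants depend only on $(T,|b|,|\sigma|)$.

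For Part 2 it suffices to bound $E[\omega(y,\theta)^{2}]$. The drift contributes at most $|b|^{2}\theta^{2}\le|b|^{2}\theta$, so the real issue is the modulus of the continuous martingale $M$. I would time-change $M$ into a Brownian motion $\widetilde W$ via Dambis--Dubins--Schwarz: since $d\langle M\rangle_s=\sigma_{i(s)}^{2}\,ds\le|\sigma|^{2}\,ds$, an $s$-increment of length $\theta$ corresponds to a clock increment of length at most $|\sigma|^{2}\theta$, whence $\omega(M,\theta)\le\omega(\widetilde W,|\sigma|^{2}\theta)$ on $[0,|\sigma|^{2}T]$. The crucial input is then the classical logarithmic modulus estimate $E[\omega(\widetilde W,\delta)^{2}]\le C\delta\ln(2S/\delta)$ for Brownian motion on $[0,S]$: partition $[0,S]$ into $\lceil S/\delta\rceil$ blocks of length $\delta$, dominate $\omega(\widetilde W,\delta)$ by a constant times $\max_k\sup_{t\in[t_k,t_{k+1}]}|\widetilde W_t-\widetilde W_{t_k}|$, and integrate the sub-Gaussian tail of this maximum; the union bound over $\sim S/\delta$ blocks is exactly what produces the factor $\ln(2S/\delta)$. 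Tracing the constants back through the time change gives \eqref{eq:modulus-continuity-x-l}.

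Finally, \eqref{eq:modulus-continuity-X} follows from \eqref{eq:modulus-continuity-x-l}. For $|u-s|\le\theta$, if $i(u)=i(s)$ then $d^{\mathcal{J}}\pare{(x(s),i(s)),(x(u),i(u))}=|x(s)-x(u)|\le\omega(x,\theta)$; if $i(u)\neq i(s)$ the path must have visited the vertex at some $\tau\in[u,s]$ with $x(\tau)=0$, so $d^{\mathcal{J}}=x(s)+x(u)\le 2\omega(x,\theta)$. In either case $\tilde{\omega}(\tilde{X},\theta)\le 2\omega(x,\theta)+\omega(l,\theta)$, and Jensen's inequality combined with \eqref{eq:modulus-continuity-x-l} gives the $\sqrt{\theta\ln(2T/\theta)}$ bound. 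The main obstacle is the logarithmic modulus estimate for the martingale part: obtaining the sharp rate $\theta\ln(2T/\theta)$ rather than a crude polynomial bound requires the sub-Gaussian maximum computation after the time change, and is the only genuinely delicate step; the moment bounds and the Skorokhod and vertex-crossing reductions are routine.
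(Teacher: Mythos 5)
Your proof is correct and follows essentially the same route as the paper: both arguments hinge on identifying $(x,\ell)$ as the Skorokhod reflection of the free It\^o process $y$, reading off the pathwise comparisons $\omega(\ell,\theta)\le\omega(y,\theta)$ and $\omega(x,\theta)\le 2\omega(y,\theta)$ from the explicit formula $\ell_s=\sup_{u}(-y(u))^{+}$, and finishing \eqref{eq:modulus-continuity-X} with the vertex-crossing observation for $d^{\mathcal{J}}$. The only differences are cosmetic: you bypass the paper's preliminary It\^o-plus-Gr\"onwall step for the pointwise moments by exploiting the uniform boundedness of the coefficients directly through BDG on $y$, and you prove the logarithmic modulus bound $E[\omega(y,\theta)^2]\le C\theta\ln(2T/\theta)$ from scratch (Dambis--Dubins--Schwarz plus a block/union-bound on sub-Gaussian maxima) where the paper simply cites Fischer--Nappo.
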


\begin{Remark}
Note that the result of Lemma \ref{lem:modules-cont} is by all means similar to known results that hold for real valued It\^o processes (see for e.g. \cite{Fischer}). Our problems come from the fact that the process $(x(t),i(t))$ lives on a star-shaped network and also from the presence of the additional increasing process $(l(t))$ in the parameters. However, these difficulties may be overcome by relying on \eqref{eq:equation-freidlin-sheu}; in order not to overload our exposition we have decided to postpone the proof of Lemma \ref{lem:modules-cont} in Appendix. 
\end{Remark}
\medskip

We are now in position to prove Proposition \ref{pr: conti noyau}.
\begin{proof} {\it (of Proposition \ref{pr: conti noyau} using Lemma \ref{lem:modules-cont}).}

Fix $(t,(x_\ast,i_\ast),l_\ast) \in [0,T]\times \mathcal{J}\times [0,+\infty)$ and let $(t_n,(x_n,i_n),l_n)$ denote a sequence converging to $(t,(x_\ast,i_\ast),l_\ast)$.

Denote for a moment $P^n := P_{t_n}^{x_n,i_n,l_n}$ $(n\in \N)$ to simplify the notations and $E^n$ its associated expectation.

From the estimates of Lemma \ref{lem:modules-cont} using Ascoli-Arzela's theorem and as a consequence of Prokhorov's theorem, we see that the sequence $(P^n)$ is weakly relatively compact  and converges (up to a subsequence) to a probability $P\in \mathcal{P}\pare{\Phi,\B(\Phi)}$. Indeed, the formal proof of this essential fact uses exactly the same arguments as the proof of the forthcoming Corollary \ref{corollary-relat-compactness} (see also Remark \ref{rem:tension-spider}). since the statement of Corollary \ref{corollary-relat-compactness} is more central for the proof of our main theorem (Theorem \ref{th: exist Spider }), for the convenience of the reader and for conciseness, we decide not to repeat it here.
\vspace{0,3 cm}

Let us now turn to show that $P$ satisfies the conditions $(\mathcal{S})$ stated in Theorem \ref{th: exi Freidlin} that characterize the probability measure in a unique way. 

We start with condition i) of $(\mathcal{S})$. Let $A:=\{\tilde{X}_u = \pare{x_\ast,i_\ast,l_\ast},\,\forall u\leq t\}$ and for any $n\in \N$,
$A_n:=\{\tilde{X}_u = \pare{x_n,i_n,l_n},\,\forall u\leq t_n\}$.
Observe that, from the definition of $P^n$, $P^n(A_n) = 1$ (for any $n\in\N$).

Fix $\varepsilon\in (0,1)$. Since $\pi(P^n, P)$ tends to $0$ and $(t_n,(x_n,i_n),l_n)$ converges to $(t,(x_\ast,i_\ast),l_\ast)$, there exists $N_{\varepsilon}\in \N$ \, s.t. for any $n\geq N_{\varepsilon}$, $|t_n-t_\ast| + d^{\mathcal{J}}((x_n,i_n),(x_\ast,i_\ast)) + |l_n-l_\ast| <\varepsilon$ and for any set $H\in \mathbb{B}(\Phi)$, $P\pare{H^{\varepsilon}}\geq P^{n}\pare{H} - \varepsilon$.

Let $n\geq N_{\varepsilon}$. Then,\\
a) either $t_n\geq t$ and in this case $A_n\subset A^{\varepsilon}$ so that (taking $H = A^{\varepsilon}$) we have
$$P((A^\varepsilon)^{\varepsilon})\geq P^n(A^\varepsilon) - \varepsilon \geq P^n(A_n) - \varepsilon = 1 - \varepsilon\;;$$
b) or $t_n\leq t$ and in this case things are a bit more intricate since we do not have $A_n\subset A^{\varepsilon}$ anymore. Still, let $\delta>>\sqrt{\varepsilon}$ a parameter to be fixed later and let $$B_n^{\delta}:=\{\sup_{s\in [t_n,t]}d^{\mathcal{J}}((x(s),i(s)), (x_n,i_n))>\delta/4\}.$$ From the result of Lemma \eqref{lem:modules-cont} and Markov's inequality, we ensure that there exists some constant $C>0$ s.t.
$\ds P^n(B_n^{\delta})\leq \frac{C(\varepsilon\ln(1/\varepsilon))^{1/2}}{\delta}$. Observe that $A_n\cap (B_n^{\delta})^c\subset A^{\delta}$ (on the set $(B_n^{\delta})^c$ the canonical process cannot have increased by more than $\delta/4$ units during the time interval $[t_n,t]$). Thus, we have 
\begin{align*}
P((A^\delta)^{\varepsilon})&\geq P^n(A^\delta) - \varepsilon\geq P^n(A_n \cap (B_n^{\delta})^c) - \varepsilon\\
&\geq P^n(A_n) -  P^n(A_n \cap B_n^{\delta}) - \varepsilon\geq P^n(A_n) -  P^n(B_n^{\delta}) - \varepsilon\\
&\geq 1 - \frac{C(\varepsilon \ln(1/\varepsilon))^{1/2}}{\delta}  - \varepsilon.
\end{align*}
Choose now $\delta = {\varepsilon}^{1/4}$. The previous inequality implies
\begin{align*}
P\pare{\pare{A^{\varepsilon^{1/4}}}^{\varepsilon}}\geq 1 - O({\varepsilon}^{1/4}\ln(1/\varepsilon)^{1/2}).
\end{align*}

Hence, for any $\varepsilon>0$, 
$P((A^\varepsilon)^{\varepsilon})\geq 1-\varepsilon$ or $P\pare{\pare{A^{\varepsilon^{1/4}}}^{\varepsilon}}\geq 1 - O({\varepsilon}^{1/4}\ln(1/\varepsilon)^{1/2})$. From the previous a) and b) we have either
$\lim_{\varepsilon \searrow 0}P((A^{\varepsilon})^{\varepsilon}) = 1$ or $\lim_{\varepsilon \searrow 0} P\pare{\pare{A^{\varepsilon^{1/4}}}^{\varepsilon}} = 1$. But the dilation sets $(A^\varepsilon)^{\varepsilon}$ and $\pare{A^{\varepsilon^{1/4}}}^{\varepsilon}$ decrease both to $A$ as $\varepsilon$ tends to zero and the monotone convergence theorem ensures that $P(A) = 1$ s.t. $P$ satisfies condition $(\mathcal{S})$ i).

Let us now turn to the proof of condition $(\mathcal{S})$ (ii) for $P$. From the result of Proposition \ref{pr : lower semi continuity}, the map $\rho$ defined in \eqref{map:rho} is lower semi continuous.
Consequently, $O :=\rho^{-1}\pare{(0,\infty)}$ is open in $\mathcal{C}^{\mathcal{J}}[0,T]\times \mathcal{L}[0,T]$ and from the weak convergence of $(P_n)$ to $P$ we have
$$ 0=\liminf_{n \to +\infty}P^n(O)\ge P(O)$$
yielding precisely that $P$ satisfies condition (ii) of $(\mathcal{S})$.

We finish by proving that $P$ satisfies condition (iii) of $(\mathcal{S})$. 

By the result of Lemma \ref{lem:non-stickness}, we know that there exists a constant $C>0$, depending only on the data $\Big(T,|b|,|\sigma|,\underline{\sigma}\Big)$ (see assumption $(\mathcal{H})$) and $x_\ast$, such that
\begin{align*}\forall \varepsilon>0,~~ \sup_n {E^n}\croc{\int_{t_{n}}^{T}\mathbf{1}_{\{x(s)<\varepsilon\}}ds}\leq C\varepsilon.
\end{align*}
Fix $\varepsilon>0$. 
Note that the following functional $\beta_\varepsilon:\mathcal{C}[0,T]\to \R,~~x\mapsto \displaystyle \int_t^T\mathbf{1}_{\{x(s)<\varepsilon\}}ds$ is lower semi continuous so that
from the weak convergence of $P_n$ to $P$
\[
E\Big[\int_{t}^{T}\mathbf{1}_{\{x(s)<\varepsilon\}}ds\Big]\leq\liminf_{n \to +\infty}E^n\Big[\int_{t}^{T}\mathbf{1}_{\{x(s)<\varepsilon\}}ds\Big].
\]

Writing
\begin{align*}E^n\Big[\int_{t_{n}}^{T}\mathbf{1}_{\{x(s)<\varepsilon\}}ds\Big]~~=~~E^n\Big[\int_{t}^{T}\mathbf{1}_{\{x(s)<\varepsilon\}}+\int_{t_{n}}^{t}\mathbf{1}_{\{x(s)<\varepsilon\}}ds\Big],
\end{align*}
gives easily that
\begin{align*}
E\Big[\int_{t}^{T}\mathbf{1}_{\{x(s)<\varepsilon\}}ds\Big]\leq\liminf_{n \to +\infty}E^n\Big[\int_{t_{n}}^{T}\mathbf{1}_{\{x(s)<\varepsilon\}}ds~~\Big]~~\leq~~(C+1)\varepsilon.
\end{align*}
Making $\varepsilon$ tend to $0$ and using Lebesgue's convergence theorem ensures
\begin{equation}
\label{eq:zero-lebeque-measure}
E\Big[\int_t^T\mathbf{1}_{\{x(s)=0\}}ds\Big]=0,~~\text{and}~~ \int_t^T\mathbf{1}_{\{x(s)=0\}}ds,~~P-\text{a.s.}
\end{equation}

\vspace{0,3 cm}
Let $f\in\mathcal{C}^{1,2}_b(\mathcal{J}_T)$. Because of the continuity condition at the junction point $0$ for $(f_i)_{i\in [I]}$ and since $(t_n,i_n)$ converges to $(t,i_\ast)$, the sequence $(f_{i_n}(t_{n},x(t_{n})))$ admits the limit
$$\lim\limits_{n\rightarrow + \infty} f_{i_n}(t_{n},x(t_{n})) = f_{i_\ast}(t,x(t))$$ even in the case where $x(t) = 0$.

For any fixed $n\in \N$, we define for $s\in [t_n, T]$:
\begin{align*}
M^f_n(s):=&\;\;\;f_{i(s)}(s,x(s))- f_{i_n}(t_{n},x(t_{n}))\\
&-\displaystyle\int_{t_{n}}^{s}\pare{\partial_tf_{i(u)}(u,x(u))+\displaystyle\frac{1}{2}\sigma_{i(u)}^2(t_n,x(u),l_n)\partial_{xx}^2f_{i(u)}(u,x(u))}du\\
& - \int_{t_{n}}^{s} \pare{b_{i(u)}(t_n,x(u),l_n)\partial_xf_{i(u)}(u,x(u))}du\\
&-\displaystyle\sum_{j=1}^{I}\int_{t_n}^{s}\alpha_{j}(t_{n},l_{n})\partial_{x}f_{j}(u,0)d\ell_u.
\end{align*}

Fix $s\in (t, T]$. Observe from \eqref{eq:zero-lebeque-measure} that the equality
\begin{align*}
M^f_n(s)=&\;\;\;f_{i(s)}(s,x(s))- f_{i_n}(t_{n},x(t_{n}))\\
&-\displaystyle\int_{t_{n}}^{s}\pare{\partial_tf_{i(u)}(u,x(u))+\displaystyle\frac{1}{2}\sigma_{i(u)}^2(t_n,x(u),l_n)\partial_{xx}^2f_{i(u)}(u,x(u))}\mathbf{1}_{\{x(u)\neq 0\}}du\\
& - \int_{t_{n}}^{s} \pare{b_{i(u)}(t_n,x(u),l_n)\partial_xf_{i(u)}(u,x(u))}\mathbf{1}_{\{x(u)\neq 0\}}du\\
&-\displaystyle\sum_{j=1}^{I}\int_{t_n}^{s}\alpha_{j}(t_{n},l_{n})\partial_{x}f_{j}(u,0)d\ell_u
\end{align*}
holds almost surely under $P$. Recall also that $P$ satisfies condition $(\mathcal{S})$ i), so that $P(x(t) = x_\ast) = 1$. Since $(t_n,i_n,l_n)$ converges to $(t,i_\ast,l_\ast)$, our assumptions $({\mathcal H})$ allow to apply Lebesgue's convergence theorem and we get that the sequence
$(M^{f}_n(s))$ converges $P$ a.s. to 
\begin{align*}
M^f(s)&:=f_{i(s)}(s,x(s))- f_{i_\ast}(t,x_\ast)\\
&\hspace{0,3 cm}-\displaystyle\int_{t}^{s}\pare{\partial_tf_{i(u)}(u,x(u))+\displaystyle\frac{1}{2}\sigma_{i(u)}^2(t,x(u),l_\ast)\partial_{xx}^2f_{i(u)}(u,x(u))}du\\
&\hspace{0,3 cm}-\int_{t}^{s} \pare{b_{i(u)}(t,x(u),l_\ast)\partial_xf_{i(u)}(u,x(u))}du~~
\\&\hspace{0,3 cm}-\displaystyle\sum_{j=1}^{I}\int_{t}^{s}\alpha_j(t,l_\ast)\partial_{x}f_{j}(u,0)d\ell_u.
\end{align*}
Now let $\varphi_s\in \mathcal{C}_b(\Phi,\R)$ $\Psi_s$ measurable and fix $u\in [s,T]$.

On the one hand, using Lebesgue's theorem for weakly convergence measures (see Corollary \ref{cr Lebesgue faible}), we get that
\begin{align*}
\lim_{n \to +\infty}{E^n}\Big[\varphi_s (M^{f}_n(u)-M^{f}_n(s))\Big]~~=~~{E}\Big[\varphi_s(M^{f}(u)-M^{f}(s))\Big].
\end{align*}
On the other hand, using the martingale property $({\mathcal{S}})$ iii) under the measure $P^n$, we see that
\begin{align*}
&\Big|{E^n} \Big[\varphi_s(M^{f}_n(u)-M^{f}_n(s))\Big]\Big|\\
&=\Big|E^n \Big[\varphi_s(M^{f}_n(u)-M^{f}_n(s))\Big]\mathbf{1}_{\{s\ge t_{n}\}}+E^n \Big[\varphi_s(M^{f}_n(u)-M^{f}_n(s))\Big]\mathbf{1}_{\{t<s\leq t_{n}\}}\Big|\\
&=\Big|E^n \Big[\varphi_s(M^{f}_n(u)-M^{f}_n(s))\Big]\Big|\mathbf{1}_{\{t<s\leq t_{n}\}}.
\end{align*}
It is easy to see that our assumptions $(\mathcal{H})$ imply $\sup\limits_{n\in \N}\Big|E^n \croc{\varphi_s(M^{f}_n(u)-M^{f}_n(s))}\Big |<+\infty$, and since time $s$ has been fixed so that $t<s$, the indicator $\mathbf{1}_{\{t<s\leq t_{n}\}}$ is easily seen to converge to $0$ as $n$ tends to $+\infty$. 
Gathering both facts ensures 
\[E\Big[\varphi_s(M^{f}(u)-M^{f}(s))\Big]=0\]
which holds true for any $s>t$ and $u\in [s,T]$ and $\varphi_s\in \Psi_s$.
Thus, we deduce $E[M^{f}(u)|\Psi_s] = M^{f}(s)$ for any $s>t$ and $u\in [s,T]$.

For $s=t$, we let $\varepsilon >0$ and write that for any $u\in [t+\varepsilon,T]$
\[E[M^{f}(u)|\Psi_t]={E}[E[M^{f}(u)|\Psi_{t+\varepsilon}]|\Psi_t]=E[M^{f}(t+\varepsilon)|\Psi_t]\]
and in this particular case we retrieve the martingale property by using Lebesgue's dominated convergence theorem for conditional expectations when letting $\varepsilon$ tend to zero.

Finally, using the condition i) of $\mathcal{S}$ at time $t$, namely that $P(\tilde{X}_t = ((x_\ast,i_\ast),l_\ast)) = 1$, we conclude that the process:
\begin{align*}
&\Bigg(~f_{i(s)}(s,x(s))- f_{i_\ast}(t,x(t))~\displaystyle-\int_{t}^{s}\pare{\partial_tf_{i(u)}(u,x(u))+\displaystyle\frac{1}{2}\sigma_{i(u)}^2(t,x(u),\ell(t))\partial_{xx}^2f_{i(u)}(u,x(u))}du\\
&\displaystyle-\int_{t}^{s}\pare{b_{i(u)}(t,x(u),\ell(t))\partial_xf_{i(u)}(u,x(u))}du~-\displaystyle\sum_{j=1}^{I}\int_{t}^{s}\alpha_j(t,\ell(t))\partial_{x}f_{j}(u,0)d\ell_u~\Bigg)_{t\leq s\leq T},
\end{align*} 
is a continuous $\pare{P, (\Psi_s)_{t\leq s\leq T}}$ martingale after time $t$. 

Applying Theorem \ref{th: exi Freidlin}, we are allowed to identify the limit $P$ to $P^{x_\ast,i_\ast,l_\ast}_t$.

We conclude that the map $(t,(x_\ast,i_\ast),l_\ast)\to P^{x_\ast,i_\ast,l_\ast}_t$ is continuous for the weak topology on $\mathcal{P}(\Phi,\mathbb{B}(\Phi))$.
\end{proof}

\subsection{One step concatenation of probability measures}
Let us first recall the fundamentals for the construction of concatenated probability measures as presented in the seminal book by Stroock and Varadhan \cite{Stroock}, which is our main tool for our construction. The proof of the following statements may be directly adapted from the proofs of Lemma 6.1.1 and Theorem 6.1.2 in \cite{Stroock}: we replace $(\Omega, \mathcal{B}(\Omega))$ by $(\Phi, \B(\Phi))$ and $\mathcal{C}([0,s],\R^d)$ and $\mathcal{C}([s,\infty),\R^d)$ by $\mathcal{C}([0,s],\mathcal{J}\times \R^+)$ and $\mathcal{C}([s,T],\mathcal{J}\times \R^+)$ accordingly.

\begin{Proposition}\label{pr: exist proba conca}
(Concatenation of probability measures - from Lemma 6.1.1 and Theorem 6.1.2 in \cite{Stroock})

\noindent a) 
Let $s\in [0,T)$. Let $(Q^{Y})_{Y \in \Phi}$ a transition probability kernel from $(\Phi,\Psi_{s})$ to $(\Phi,\Psi_{T})$ (this means that the mapping $Y\mapsto Q^Y(A)$ is $\Psi_s$-measurable for all $A\in \Psi_T$) that satisfies:
$$\forall {Y}\in \Phi,~~Q^{{Y}}\Big(\tilde{X}_{s}(\cdot)=\tilde{X}_{s}({Y}) = Y_s\Big)=1.$$

There exists a unique transition probability kernel from $(\Phi,\Psi_{s})$ to $(\Phi,\Psi_{T})$ denoted by $$(\Pi_Y\otimes_{s}Q^Y)_{Y\in \Phi}$$ such that:

$\forall Y\in \Phi$,
\begin{align}
\label{def-Pi-Proba}
&\Pi_Y\otimes_{s}Q^Y\Big(\tilde{X}_t(\cdot)=\tilde{X}_{t}(Y) = Y_t,\;\forall t\in [0,s]\Big)=1,\\
\label{def-Pi-Proba-2}
&\forall A\in \sigma(\tilde{X}_{(s+t)\wedge T},s\geq 0),~~\Pi_Y\otimes_{s}Q^Y(A)=Q^Y(A).
\end{align}
b) Moreover, let $\tau$ a $(\Psi_{s})_{t\leq s \leq T}$ stopping time satisfying that $\tau(Y)\leq T$ for all $Y\in \Phi$ and let $(Q^{Y})_{Y \in \Phi}$ a transition probability kernel from $(\Phi,\Psi_{\tau})$ to $(\Phi,\Psi_{T})$ (the mapping $Y\mapsto Q^Y(A)$ is $\Psi_\tau$-measurable for all $A\in \Psi_T$) satisfying
$$\forall {Y}\in \Phi,\;\;Q^{{Y}}\Big(\tilde{X}_{\tau(Y)}(.)=\tilde{X}_{\tau(Y)}({Y}) = Y_{\tau(Y)}\Big)=1.$$

To $P\in {\mathcal P}\pare{\Phi, {\mathbb B}(\Phi)}$ we associate a unique probability measure on $\pare{\Phi, {\mathbb B}(\Phi)}$ denoted by $P\otimes_\tau Q$ satisfying that \\
(i) the restriction of $P\otimes_\tau Q$ with respect to $\Psi_{\tau}$ is equal to $P$;\\
(ii) a r.c.p.d (regular conditional probability distribution) of $P\otimes_\tau Q$  with respect to $\Psi_{\tau}$ is equal to $(\Pi_Y\otimes_{\tau(Y)}Q^Y)_{Y\in \Phi}$: for all
$F\in \B(\Phi)$,
\begin{equation}
\label{eq:prop-rcpd}
P\otimes_\tau Q\,(F\;|\;\Psi_\tau)(Y) = \Pi_Y\otimes_{\tau(Y)}Q^Y(F)
,
\end{equation}
$P\otimes_\tau Q$ almost surely (w.r.t\;\;$Y$).
\end{Proposition}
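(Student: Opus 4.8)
The plan is to transcribe the construction of Stroock and Varadhan \cite{Stroock} (Lemma 6.1.1 and Theorem 6.1.2), checking that every step survives the replacement of the state space $\R^d$ by the Polish space $\mathcal{J}\times\R^+$. The only structural facts needed are: (i) that a continuous path on $[0,T]$ is recovered from its restrictions to $[0,s]$ and $[s,T]$ glued at $t=s$; (ii) that $\mathcal{J}\times\R^+$ is Polish, so that regular conditional probability distributions exist on $\pare{\Phi,\B(\Phi)}$; and (iii) that $\B(\Phi)=\Psi_s\vee\sigma\pare{\tilde X_{(s+t)\wedge T}:t\geq 0}$. All three hold here: the first because $d^{\Phi}$-continuity is verified separately on each subinterval with matching at the splitting time, and the last by the usual monotone class argument on cylinder sets.

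For part a), I would fix $s\in[0,T)$ and $Y\in\Phi$ and introduce the concatenation map
\[
\Theta_s^Y:\Phi\to\Phi,\qquad \Theta_s^Y(\omega)(t)=Y(t)\mathbf{1}_{[0,s]}(t)+\omega(t)\mathbf{1}_{(s,T]}(t).
\]
Since $Q^Y$ is carried by $\{\omega:\tilde X_s(\omega)=Y_s\}$ and $Y(s)=Y_s$, the restriction of $\Theta_s^Y$ to this set produces genuinely continuous paths and is $\B(\Phi)$-measurable. I would then \emph{define} $\Pi_Y\otimes_s Q^Y:=Q^Y\circ(\Theta_s^Y)^{-1}$. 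Property \eqref{def-Pi-Proba} is immediate since $\Theta_s^Y(\omega)$ coincides with $Y$ on $[0,s]$ for every $\omega$; property \eqref{def-Pi-Proba-2} holds because $\Theta_s^Y$ leaves the path unchanged on $[s,T]$, so the pushforward does not alter the law of $\pare{\tilde X_{(s+t)\wedge T}}_{t\geq 0}$. Uniqueness follows from (iii): any measure satisfying \eqref{def-Pi-Proba}--\eqref{def-Pi-Proba-2} is prescribed on the $\pi$-system of sets $\{X|_{[0,s]}=Y|_{[0,s]}\}\cap A$ with $A$ in the future $\sigma$-algebra, and these generate $\B(\Phi)$, so Dynkin's lemma applies. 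The required $\Psi_s$-measurability of $Y\mapsto\Pi_Y\otimes_s Q^Y(A)$ is inherited from the $\Psi_s$-measurability of $Y\mapsto Q^Y$ together with the joint measurability of $(Y,\omega)\mapsto\Theta_s^Y(\omega)$, first for $A$ a cylinder set and then by a monotone class argument.

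For part b), the new ingredient is the stopping time $\tau$. Since $\pare{\Phi,\B(\Phi)}$ is Polish, a regular conditional probability distribution of any $P\in\mathcal{P}\pare{\Phi,\B(\Phi)}$ given $\Psi_\tau$ exists. I would set
\[
P\otimes_\tau Q\,(F):=\int_\Phi \Pi_Y\otimes_{\tau(Y)}Q^Y(F)\,dP(Y),\qquad F\in\B(\Phi),
\]
the integrand being $\Psi_\tau$-measurable by the measurability established in part a) applied at the random time $\tau(Y)$, together with the $\Psi_\tau$-measurability of $Y\mapsto Q^Y$; countable additivity passes under the integral by monotone convergence, so $P\otimes_\tau Q$ is a probability measure. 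For $F\in\Psi_\tau$ one has $\Pi_Y\otimes_{\tau(Y)}Q^Y(F)=\mathbf{1}_F(Y)$, because that kernel freezes the path to $Y$ up to time $\tau(Y)$; integrating gives $P\otimes_\tau Q(F)=P(F)$, which is claim (i). Claim (ii), the r.c.p.d.\ identity \eqref{eq:prop-rcpd}, then follows from the very definition of $P\otimes_\tau Q$ as the average of the kernels $\pare{\Pi_Y\otimes_{\tau(Y)}Q^Y}$ against $P$, combined with the essential uniqueness of the r.c.p.d.\ given $\Psi_\tau$.

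The step I expect to be the main obstacle is the measurability bookkeeping around the stopping time in part b): one must verify that $Y\mapsto\Pi_Y\otimes_{\tau(Y)}Q^Y(F)$ is genuinely $\Psi_\tau$-measurable when both the gluing time $\tau(Y)$ and the kernel $Q^Y$ depend on $Y$. In the fixed-time case this is routine, but with $\tau$ one must argue on the sets $\{\tau=s\}$, or approximate $\tau$ by discrete stopping times $\tau_k\downarrow\tau$ taking finitely many values, glue using part a) at each value, and pass to the limit using continuity of paths, exactly as in \cite{Stroock}; the Polish structure of $\mathcal{J}\times\R^+$ and the $d^{\Phi}$-continuity of concatenation are precisely what allow this limiting argument to go through unchanged.
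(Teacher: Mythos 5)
Your proposal is correct and follows exactly the route the paper prescribes: the paper gives no written proof of this proposition, stating only that it "may be directly adapted from the proofs of Lemma 6.1.1 and Theorem 6.1.2 in \cite{Stroock}" by substituting $(\Phi,\mathbb{B}(\Phi))$ for the Euclidean path space, and your argument is precisely that adaptation, with the pushforward under the gluing map, the $\pi$-system uniqueness argument, and the measurability bookkeeping at the stopping time filled in where the paper leaves them to the reader. The points you single out (existence of r.c.p.d.'s on the Polish space $\Phi$, the Galmarino-type fact that $\Pi_Y\otimes_{\tau(Y)}Q^Y(F)=\mathbf{1}_F(Y)$ for $F\in\Psi_\tau$, and the measurability of $Y\mapsto\Pi_Y\otimes_{\tau(Y)}Q^Y(A)$) are indeed the only places where the change of state space could conceivably matter, and your verification that they survive is sound.
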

\medskip

We are now going to make use of Proposition \ref{pr: exist proba conca} for the construction of our spider diffusion. The basic idea is to concatenate probability measures that come from Theorem \ref{th: exi Freidlin}. This is possible thanks to the result of Proposition \ref{pr: conti noyau} that ensures that the corresponding transition probability kernels are measurable. With this in mind, we need the following ingredients.
\medskip

Fix $(x_\ast,i_\ast,l_\ast)\in \mathcal{J}\times \R^+$ and let $P^{\ast}:=P_0^{x_\ast,i_\ast,l_\ast}$ stand for the unique probability measure on $(\Phi,\mathbb{B}(\Phi))$ constructed in Theorem \ref{th: exi Freidlin}. The conditions $(\mathcal{S})$ read in this case:\\
-(i) $\tilde{X}_0 = \big(x(0),i(0),l(0)\big) =(x_\ast,i_\ast,l_\ast)$, $P^{\ast}$-a.s. (where as usual with our convention, $\tilde{X} = (x,i,l)$ stands for the canonical process of $(\Phi,\mathbb{B}(\Phi))$)\\ 
-(ii)
\begin{eqnarray*}
\displaystyle\displaystyle\int_{0}^{T}\mathbf{1}_{\{x(u)>0\}}d\ell_u=0,~~P^{\ast}-\text{ a.s.} \;\;\;\big (\textrm{or equivalently } \int_{0}^{T}\mathbf{1}_{\{x(u)>0\}}dl(u)=0,~~P^{\ast}-\text{ a.s.}\\
\textrm{see Remark \ref{rem:importante-construction-canonique}}\big ).
\end{eqnarray*}
-(iii) For any $f\in\mathcal{C}^{1,2}_b(\mathcal{J}_T)$, the following process: 
\begin{align}
\label{eq:def-Mftilde}
&\Bigg (\tilde{M}^f(s):=f_{i(s)}(s,x(s)) - f_{i_\ast}(0,x_\ast)\nonumber\\
&\hspace{0.3 cm}-\displaystyle\int_{0}^{s}\pare{\partial_tf_{i(u)}(u,x(u))+\displaystyle\frac{1}{2}\sigma_{i(u)}^2(0,x(u),0)\partial_{xx}^2f_{i(u)}(u,x(u))}du\nonumber\\
&\hspace{0.3 cm}-\displaystyle\int_0^s \pare{b_{i(u)}(0,x(u),0)\partial_xf_{i(u)}(u,x(u))}du\nonumber\\
&\hspace{0.3 cm}-\displaystyle\sum_{i=1}^{I}\int_{0}^{s}\alpha_j(0,0)\partial_{x}f_{j}(u,0)dl(u)\bigg)_{0\leq s\leq T}
\end{align}
is a $\pare{P^{\ast}, (\Psi_s)_{0 \leq s \leq T}}$ martingale.  
Note that we made use of Remark \ref{rem:importante-construction-canonique} in order to replace the local time by the increasing component $l$ of the canonical process).
\medskip

Let $\tau$ stand for a $(\Psi_{s})_{t\leq s \leq T}$ stopping time satisfying that $\tau(Y)\leq T$ for any $Y\in \Phi$. 
In the sequel, we will use the notation $Y:=\Big(\big(\hat{x}(\cdot),\hat{i}(\cdot)\big),\hat{\ell}(\cdot)\Big) \in \Phi$.

Once again applying Theorem \ref{th: exi Freidlin},  we may define the following family of probability measures on $\pare{\Phi, {\mathbb B}(\Phi)}$
\begin{equation}
\label{defeq: transition-probability-kernel}
\Big(Q^Y:=P_{\tau(Y)}^{\hat{x}(\tau(Y)),\hat{i}(\tau(Y)),\hat{l}(\tau(Y))}\Big)_{Y:=\pare{\hat{x}(\cdot),\hat{i}(\cdot),\hat{l}(\cdot)} \in \Phi}
\end{equation}
satisfying the following conditions (i)-(ii)-(iii) that hold for any $Y\in \Phi$:\\
-(i) For each $s\leq \tau(Y)$, $\tilde{X}_s=\Big(\hat{x}(\tau(Y)),\hat{i}(\tau(Y)),\hat{l}(\tau(Y))\Big)$,\;$Q^Y$-a.s. \\ 
-(ii) 
\begin{eqnarray}
\label{eq:zero-acc-tl}
\displaystyle
\int_{\tau(Y)}^{T}\mathbf{1}_{\{x(u)>0\}}dl(u)=0,~~Q^{Y}-\text{ a.s.}\;\;\;\big (\textrm{see Remark \ref{rem:importante-construction-canonique}}\big )\nonumber.
\end{eqnarray}
-(iii) For any $f\in\mathcal{C}^{1,2}_b(\mathcal{J}_T)$, the following process: 
 \begin{align}
\label{eq:def-hatM}
&\Bigg (\hat{M}^f(s):= f_{i(s)}(s,x(s)) - f_{\hat{i}(\tau(Y))}(\tau(Y),\hat{x}(\tau(Y))\nonumber\\
&\hspace{0.3 cm}-\displaystyle\int_{\tau(Y)}^{s}\pare{\partial_tf_{i(u)}(u,x(u))+\displaystyle\frac{1}{2}\sigma_{i(u)}^2(\tau(Y),x(u),\hat{l}(\tau(Y)))\partial_{xx}^2f_{i(u)}(u,x(u))}du\nonumber\\
&\hspace{0.3 cm}-\displaystyle\int_{\tau(Y)}^s \pare{b_{i(u)}(\tau(Y),x(u),\hat{l}(\tau(Y)))\partial_xf_{i(u)}(u,x(u))}du\nonumber\\
&\hspace{0.3 cm}-\displaystyle\sum_{i=1}^{I}\int_{\tau(Y)}^{s}\alpha_j(\tau(Y),\hat{l}(\tau(Y)))\partial_{x}f_{j}(u,0)dl(u)\Bigg )_{\tau(Y)\leq s\leq T}
\end{align}
is a $\pare{Q^Y, (\Psi_s)_{\tau(Y)\wedge T\leq s\leq T}}$ martingale after the $Q^Y$ almost surely deterministic time $\tau(Y)\wedge T$ (in the sense given in \cite{Stroock}, Chapter I) (once again see Remark \ref{rem:importante-construction-canonique}).
\medskip
\medskip

We are now in position to state the following lemma, which comes as a consequence of Propositions \ref{pr: conti noyau} and \ref{pr: exist proba conca} and describes the first step of our construction.
\begin{Lemma}(One step concatenation of spider probability measures)\label{lm : construction par morceaux}

As usual let $(x,i,l)$ stand for the canonical process of $(\Phi, \B(\Phi))$.

Let $(Q^{Y})_{Y \in \Phi}$ the family of probability measures defined in \eqref{defeq: transition-probability-kernel}.
Then,

-(a) The family $(Q^Y)_{Y \in \Phi}$ is a transition probability kernel from $(\Phi,\Psi_{\tau})$ to $(\Phi,\Psi_{T})$ that satisfies:
$$\forall Y\in \Phi,~~Q^Y\Big(\tilde{X}_{\tau(Y)}(\cdot)=\tilde{X}_{\tau(Y)}(Y)\Big)=1.$$
-(b) Moreover, if $(x_\ast,i_\ast,l_\ast)\in \mathcal{J}\times \R^+$ and $P^{\ast}:=P_0^{(x_\ast,i_\ast),l_\ast}$ stands for the unique probability measure on $(\Phi,\mathbb{B}(\Phi))$ of Theorem \ref{th: exi Freidlin}, then the concatenated probability measure $P^{\ast}\otimes_\tau Q\in {\mathcal P}\pare{\Phi, {\mathbb B}(\Phi)}$ constructed in Proposition \ref{pr: exist proba conca} has the following properties:\\
-(i) $\big(x(0),i(0)),l(0)\big)=\big((x_\ast,i_\ast),l_\ast\big)$, $P^{\ast}\otimes_\tau Q$ a.s.\\ 
-(ii) 
\begin{eqnarray*}
\displaystyle\int_{0}^{T}\mathbf{1}_{\{x(u)>0\}}dl(u)=0,~~P^{\ast}\otimes_\tau Q \text{ a.s.}
\end{eqnarray*}
-(iii) For any $f\in\mathcal{C}^{1,2}_b(\mathcal{J}_T)$, the following process:
\begin{align}
\label{eq:def-Mf}
&\Bigg(M^f(s):= f_{i(s)}(s,x(s)) - f_{i_\ast}(0,x_\ast)\nonumber\\
&\hspace{0.3 cm}-\displaystyle\int_{0}^{\tau\wedge s}\pare{\partial_tf_{i(u)}(u,x(u))+\displaystyle\frac{1}{2}\sigma_{i(u)}^2(0,x(u),l_\ast)\partial_{xx}^2f_{i(u)}(u,x(u))}du\nonumber\\
&\hspace{0.3 cm}-\displaystyle\int_0^{\tau\wedge s} \pare{b_{i(u)}(0,x(u),l_\ast)\partial_xf_{i(u)}(u,x(u))}du\nonumber\\
&\hspace{0.3 cm}-\displaystyle\int_{\tau\wedge s}^s\pare{\partial_tf_{i(u)}(u,x(u))+\displaystyle\frac{1}{2}\sigma_{i(u)}^2(\tau,x(u),l(\tau))\partial_{xx}^2f_{i(u)}(u,x(u))}du\nonumber\\
&\hspace{0.3 cm}-\displaystyle\int_{\tau\wedge s}^s \pare{b_{i(u)}(\tau,x(u),l(\tau))\partial_xf_{i(u)}(u,x(u))}du\nonumber\\
&\hspace{0.3 cm}-\displaystyle\sum_{i=1}^{I}\int_{0}^{\tau \wedge s}\alpha_j(0,l_\ast)\partial_{x}f_{j}(u,0)dl(u) - \sum_{i=1}^{I}\int_{\tau \wedge s}^s\alpha_j(\tau,l(\tau))\partial_{x}f_{j}(u,0)dl(u)
\Bigg)_{0\leq s\leq T}
\end{align}
 is a $\pare{P^{\ast}\otimes_\tau Q, (\Psi_s)_{0 \leq s\leq T}}$ martingale.
\medskip

 \end{Lemma}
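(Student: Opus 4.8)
\textbf{Overview and Part (a).} The strategy is to read the measurability needed in (a) off Proposition \ref{pr: conti noyau}, to produce $\mu:=P^{\ast}\otimes_\tau Q$ together with its two structural features (agreement with $P^{\ast}$ on $\Psi_\tau$ and the r.c.p.d. formula \eqref{eq:prop-rcpd}) from Proposition \ref{pr: exist proba conca} b), and then to verify (i)--(iii) by splitting every quantity at $\tau$ into a pre-$\tau$ piece governed by $P^{\ast}$ and a post-$\tau$ piece governed by the kernels $Q^Y$. For (a), each $Q^Y=P_{\tau(Y)}^{\hat x(\tau(Y)),\hat i(\tau(Y)),\hat l(\tau(Y))}$ is a probability measure by Theorem \ref{th: exi Freidlin}. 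Writing $Q^Y=\kappa\pare{\tau(Y),\tilde X_{\tau(Y)}(Y)}$ with $\kappa:(t,x_\ast,i_\ast,l_\ast)\mapsto P_t^{x_\ast,i_\ast,l_\ast}$, I would note that $\kappa$ is continuous, hence Borel, by Proposition \ref{pr: conti noyau}, while $Y\mapsto\pare{\tau(Y),\tilde X_{\tau(Y)}(Y)}$ is $\Psi_\tau$-measurable by the standard properties of a stopping time and of the canonical process; composing with the Borel evaluation $P\mapsto P(A)$ on $\mathcal{P}(\Phi,\B(\Phi))$ gives the $\Psi_\tau$-measurability of $Y\mapsto Q^Y(A)$ for every $A\in\Psi_T=\B(\Phi)$. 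The pinning identity $Q^Y\pare{\tilde X_{\tau(Y)}(\cdot)=\tilde X_{\tau(Y)}(Y)}=1$ is condition $(\mathcal{S})$-i of Theorem \ref{th: exi Freidlin} read at $s=\tau(Y)$.

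\textbf{Part (b), conditions (i)--(ii).} Proposition \ref{pr: exist proba conca} b) furnishes $\mu$ with $\mu=P^{\ast}$ on $\Psi_\tau$ and with r.c.p.d. given $\Psi_\tau$ equal to $\pare{\Pi_Y\otimes_{\tau(Y)}Q^Y}_{Y}$. Condition (i) is immediate, since $\tilde X_0$ is $\Psi_0\subseteq\Psi_\tau$-measurable and hence has the same law under $\mu$ as under $P^{\ast}$, namely $\delta_{(x_\ast,i_\ast,l_\ast)}$. For (ii) I would split $\int_0^T\ind{\{x(u)>0\}}dl(u)=\int_0^\tau\ind{\{x(u)>0\}}dl(u)+\int_\tau^T\ind{\{x(u)>0\}}dl(u)$: the first integral is $\Psi_\tau$-measurable and vanishes $P^{\ast}$-a.s., hence $\mu$-a.s.\ by the agreement on $\Psi_\tau$, while the second is a post-$\tau$ functional, so by \eqref{eq:prop-rcpd} and \eqref{def-Pi-Proba-2} its conditional expectation given $\Psi_\tau$ equals the $Q^Y$-expectation of $\int_{\tau(Y)}^T\ind{\{x(u)>0\}}dl(u)$, which is zero by condition $(\mathcal{S})$-ii of $Q^Y$.

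\textbf{Part (b), condition (iii).} The crux is the pathwise identity $M^f(s)=\tilde M^f(s\wedge\tau)+\hat M^f(s\vee\tau)$ (with the convention $\hat M^f(\tau)=0$), where $\tilde M^f$ is the $P^{\ast}$-martingale of condition $(\mathcal{S})$-iii and $\hat M^f$ is as in \eqref{eq:def-hatM}; this is read off by comparing \eqref{eq:def-Mf} with \eqref{eq:def-Mftilde}--\eqref{eq:def-hatM}, using the pinning of $Q^Y$ to identify the frozen values $\hat x(\tau(Y))=x(\tau)$, $\hat i(\tau(Y))=i(\tau)$, $\hat l(\tau(Y))=l(\tau)$, so that the coefficients of the two pieces match exactly those appearing in $M^f$. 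Integrability of $M^f(s)$ follows from $f\in\mathcal{C}^{1,2}_b(\mathcal{J}_T)$, the boundedness of $\sigma_i,b_i,\alpha_i$, and $E^\mu\croc{l(T)}<+\infty$ (Lemma \ref{lem:modules-cont}). For the stopped pre-$\tau$ term, optional sampling makes $\tilde M^f(\cdot\wedge\tau)$ a $P^{\ast}$-martingale; for $A\in\Psi_s$ the increment vanishes on $\{\tau\le s\}$ and $A\cap\{\tau>s\}\in\Psi_\tau$, so the identity $E\croc{(\tilde M^f(u\wedge\tau)-\tilde M^f(s\wedge\tau))\ind{A}}=0$ transfers from $P^{\ast}$ to $\mu$ by the agreement on $\Psi_\tau$. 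For the post-$\tau$ term I would condition on $\Psi_\tau$ via \eqref{eq:prop-rcpd}, freeze the pre-$\tau(Y)$ path through \eqref{def-Pi-Proba}, reduce $\Pi_Y\otimes_{\tau(Y)}Q^Y$ to $Q^Y$ by \eqref{def-Pi-Proba-2}, and invoke the $Q^Y$-martingale property of $\hat M^f$ after $\tau(Y)$; summing the two contributions yields (iii).

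\textbf{Main obstacle.} I expect the delicate step to be precisely this last reduction for the post-$\tau$ term. One must verify that, under the kernel $\Pi_Y\otimes_{\tau(Y)}Q^Y$ whose trajectory before $\tau(Y)$ is deterministic, an event $A\in\Psi_s$ intersected with $\{\tau\le s\}$ effectively becomes a post-$\tau(Y)$ event, so that \eqref{def-Pi-Proba-2} legitimately replaces $\Pi_Y\otimes_{\tau(Y)}Q^Y$ by $Q^Y$ before the martingale property after $\tau(Y)$ is applied. This measure-theoretic bookkeeping---reconciling the filtration $(\Psi_s)$ with the conditioning at $\tau$---is exactly the content to be adapted from Lemma 6.1.1 and Theorem 6.1.2 of \cite{Stroock}.
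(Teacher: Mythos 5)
Your proposal is correct and follows essentially the same route as the paper's proof: kernel measurability from Proposition \ref{pr: conti noyau}, transfer of (i)--(ii) via the agreement of $P^{\ast}\otimes_\tau Q$ with $P^{\ast}$ on $\Psi_\tau$ together with the r.c.p.d.\ identity \eqref{eq:prop-rcpd} and \eqref{def-Pi-Proba-2}, and the martingale property (iii) obtained by splitting at $\tau$ and combining the $P^{\ast}$-martingale property of $\tilde M^f$ (with optional sampling) with the $Q^Y$-martingale property of $\hat M^f$ after $\tau(Y)$. The only cosmetic difference is that you organize (iii) as the pathwise decomposition $M^f(s)=\tilde M^f(s\wedge\tau)+\hat M^f(s\vee\tau)$ and prove each summand is a $(\Psi_s)$-martingale under $P^{\ast}\otimes_\tau Q$, whereas the paper fixes $t\leq s$ and a $\Psi_t$-measurable test function and splits the expectation of the increment over the three events $\{s<\tau\}$, $\{t\leq\tau\leq s\}$, $\{\tau<t\}$ --- the underlying computations, including the delicate reduction of $\Pi_Y\otimes_{\tau(Y)}Q^Y$ to $Q^Y$ on post-$\tau(Y)$ functionals that you correctly flag, are identical.
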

\begin{proof}
The fact that $(Q^Y)_{Y\in \Phi}$ is a measurable kernel from $(\Phi,\Psi_{\tau})$ to $(\Phi,\Psi_{T})$ is a straightforward consequence of Proposition \ref{pr: conti noyau} and the point $(a)$ appears as a consequence of the definition of $Q^Y$.

Using the properties of $P^{\ast}\otimes_\tau Q$ that derive from Proposition \ref{pr: exist proba conca}, we recover easily that
\[P^{\ast}\otimes_\tau Q\Big(\big(x(0),i(0), l(0)\big)=\big(x_\ast,i_\ast,l_\ast\big)\Big)=P^{\ast}\Big(\big (x(0),i(0),l(0)\big)=\big(x_\ast,i_\ast,l_\ast\big)\Big)=1.\]
Since $\ds \int_{0}^{\tau}\mathbf{1}_{\{x(u)>0\}}dl(u)$ is $\Psi_\tau$ measurable,
\begin{align}
\label{eq:tl-cond-det}
 P^{\ast}\otimes_\tau Q\Big(\int_{0}^{\tau}\mathbf{1}_{\{x(u)>0\}}dl(u)=0\Big)=P^{\ast}\Big(\int_{0}^{\tau}\mathbf{1}_{\{x(u)>0\}}dl(u)=0\Big)=1.  
\end{align}
Conditioning w.r.t $\Psi_\tau$, we see that
\begin{align}
\label{eq:tl-cond-tau}
&P^{\ast}\otimes_\tau Q\Big(\int_{\tau}^T\mathbf{1}_{\{x(u)>0\}}dl(u)=0\Big)\nonumber\\
&=P^{\ast}\otimes_\tau Q\pare{P^{\ast}\otimes_\tau Q\Big(\int_{\tau}^T\mathbf{1}_{\{x(u)>0\}}dl(u)=0\Big \vert \Psi_\tau\Big)}\nonumber\\
&=P^{\ast}\otimes_\tau Q\pare{\Pi_Y\otimes_{\tau(Y)} Q^Y\Big(\int_{\tau}^T\mathbf{1}_{\{x(u)>0\}}dl(u)=0\Big)}\nonumber\\
&=P^{\ast}\otimes_\tau Q\pare{\Pi_Y\otimes_{\tau(Y)} Q^Y\Big(\int_{\tau(Y)}^T\mathbf{1}_{\{x(u)>0\}}dl(u)=0\Big)},
\end{align}
where we have used \eqref{def-Pi-Proba} for the last line to replace $\tau$ by $\tau(Y)$, which is justified because $\tau$ is a stopping time.

Note that by construction $(l(s))_{\tau(Y)\leq s\leq T}$ is an $(\Psi_s)_{\tau(Y)\leq s\leq T}$ additive functional under\\$P^{\ast}\otimes_\tau Q$. In particular, from the $\Pi_Y\otimes_{\tau(Y)} Q^Y$ a.s. convergence of the Stieljes sums corresponding to the integral $\displaystyle \int_{\tau(Y)}^T\mathbf{1}_{\{x(u)>0\}}dl(u)$, we observe that $$\left \{\tilde{X} = (x,i,l) \in  \Phi~;~\ds \int_{\tau(Y)}^T\mathbf{1}_{\{x(u)>0\}}dl(u)=0\right \}$$ belongs to the $\sigma$-field of canonical events occurring between $\tau(Y)$ and $T$\\
namely $\sigma\pare{\tilde{X}_{(\tau(Y) + s)\wedge T} ,s\geq 0}$.

Thus, we may apply the results of Proposition \ref{pr: exist proba conca} \eqref{def-Pi-Proba-2} to derive from \eqref{eq:tl-cond-tau} that
\begin{align*}
&P^{\ast}\otimes_\tau Q\Big(\int_{\tau}^T\mathbf{1}_{\{x(u)>0\}}dl(u)=0\Big)\\
&={\mathbb E}^{P^{\ast}\otimes_\tau Q}\croc{\Pi_Y\otimes_{\tau(Y)} Q^{Y} \pare{\int_{\tau(Y)}^T\mathbf{1}_{\{x(u)>0\}}dl(u)=0}}\\
&={\mathbb E}^{P^{\ast}\otimes_\tau Q}\croc{Q^{Y} \pare{\int_{\tau(Y)}^T\mathbf{1}_{\{x(u)>0\}}dl(u)=0}}\\
&={\mathbb E}^{P^{\ast}}\croc{P^{\pare{\hat{x}(\tau(Y)), \hat{i}(\tau(Y))}, \hat{\ell}(\tau(Y))}_{\tau(Y)} \pare{\int_{\tau(Y)}^T\mathbf{1}_{\{x(u)>0\}}dl(u)=0}}.
\end{align*}
Recall that for any $Y\in \Phi$, the process $(l(u))$ is $P^{\pare{\hat{x}(\tau(Y)), \hat{i}(\tau(Y))}, \hat{\ell}(\tau(Y))}_{\tau(Y)}$ -a.s constant over $[0,\tau(Y)]$ so that the contribution of the Stieljes measure $dl(u)$ is reduced to zero on this interval. Remember also that $\ds \int_{\tau(Y)}^T\mathbf{1}_{\{x(u)>0\}}dl(u)=0$ under $P^{\pare{\hat{x}(\tau(Y)), \hat{i}(\tau(Y))}, \hat{\ell}(\tau(Y))}_{\tau(Y)}$ by \eqref{eq:zero-acc-tl}. Gathering these facts, we find
\[
P^{\ast}\otimes_\tau Q\Big(\int_{\tau}^T\mathbf{1}_{\{x(u)>0\}}dl(u)=0\Big)=1.
\]
Combining with \eqref{eq:tl-cond-det} ensures
\[P^{\ast}\otimes_\tau Q\Big(\forall s\in [0,T],\;\;\int_{0}^s\mathbf{1}_{\{x(u)>0\}}dl(u)=0\Big)=1.\]
\medskip

Fix $(s,t)\in [0,T]^2$ with $t\leq s$ and remember the definition of $(M^f(s))_{s\in [0,T]}$ in \eqref{eq:def-Mf}.
Then, we have that for any $\Psi_t$ measurable map $\varphi_t\in \mathcal{C}_b(\Phi,\R)$
\begin{align*}
   &\mathbb{E}^{P^{x_\ast}\otimes_\tau Q}[\varphi_t(M^f(s)-M^f(t))]\\
   &=
   \mathbb{E}^{P^{\ast}\otimes_\tau Q}[\varphi_t(M^f(s)-M^f(t))\mathbf{1}_{\{t\leq s< \tau\}}]\\
   &\hspace{0,4 cm}+ \mathbb{E}^{P^{\ast}\otimes_\tau Q}[\varphi_t(M^f(s)-M^f(t))\mathbf{1}_{\{t\leq \tau\leq s\}}] + \mathbb{E}^{P^{\ast}\otimes_\tau Q}[\varphi_t(M^f(s)-M^f(t))\mathbf{1}_{\{\tau< t\leq s\}}] \\
   &:=I_1 + I_2 + I_3.
\end{align*}
For the term $I_1$, observe that the set $\{s< \tau\}$ is $\Psi_\tau$ measurable. Since $t\leq s$ by assumption, this is also the case for the random variable
\[
\varphi_t(M^f(s)-M^f(t))\mathbf{1}_{\{s< \tau\}}.
\]
Since $P^{\ast}\otimes_\tau Q$ restricted to $\Psi_\tau$ is equal to $P^{\ast}$, we get
\begin{align*}
\mathbb{E}^{P^{\ast}\otimes_\tau P_{\tau}}[\varphi_t(M^f(s)-M^f(t))\mathbf{1}_{\{s< \tau\}}]&=
\mathbb{E}^{P^{\ast}}[\varphi_t(M^f(s)-M^f(t))\mathbf{1}_{\{s< \tau\}}].
\end{align*}
Conditioning now w.r.t. $\Psi_s$ and using the fact that $\mathbf{1}_{\{s\leq \tau\}}$ and $\varphi_t$ are $\Psi_s$ measurable r.v. gives
\begin{align*}
I_1&=\mathbb{E}^{P^{\ast}\otimes_\tau Q}[\varphi_t(M^f(s)-M^f(t))\mathbf{1}_{\{s< \tau\}}]\\&=
\mathbb{E}^{P^{\ast}}\croc{\mathbb{E}^{P^{\ast}}\croc{(M^f(s)-M^f(t))\mathbf{1}_{\{s< \tau\}}\big \vert\Psi_s}\varphi_t}\\
&=
\mathbb{E}^{P^{\ast}}\croc{\mathbb{E}^{P^{\ast}}\croc{(\tilde{M}^f(s)-\tilde{M}^f(t))\big \vert\Psi_s}\varphi_t\mathbf{1}_{\{s< \tau\}}}\\
&=0,
\end{align*}
where we have used the facts that $({M}^f(s))_{s\in [0,T]}$ equals $(\tilde{M}^f(s))_{s\in [0,T]}$ (defined in \eqref{eq:def-Mftilde}) on the set $\{s< \tau\}$ and that $(\tilde{M}^f(s))_{s\in [0,T]}$ is a $(\Psi_u)_{u\in [0,T]}$ martingale under $P^{\ast}$. 

The terms $I_2$ and $I_3$ are treated almost in a similar way so we only concentrate on $I_2$.

We now apply the results of Proposition \ref{pr: exist proba conca}. Conditioning in the expectation w.r.t $\Psi_\tau$, we derive
\begin{align*}
\begin{split}
I_2 &= \mathbb{E}^{P^{\ast}\otimes_\tau Q}[\varphi_t(M^f(s)-M^f(t))\mathbf{1}_{\{s\geq \tau\geq t\}}]\\
&=
\mathbb{E}^{P^{\ast}\otimes_\tau Q}[\varphi_t(M^f(s)-M^f(\tau))\mathbf{1}_{\{s\geq \tau\geq t\}}]\\&\hspace{1,4 cm}+\mathbb{E}^{P^{\ast}\otimes_\tau Q}[\varphi_t(M^f(\tau)-M^f(t))\mathbf{1}_{\{s\geq \tau\geq t\}}]\\
&=
\mathbb{E}^{P^{\ast}\otimes_\tau Q}\croc{\mathbb{E}^{P^{\ast}\otimes_\tau Q}\croc{\varphi_t(M^f(s)-M^f(\tau))\mathbf{1}_{\{s\geq \tau\geq t\}}|\Psi_\tau}}\\&\hspace{1,4 cm}+\mathbb{E}^{P^{\ast}}[\varphi_t(M^f(\tau)-M^f(t))\mathbf{1}_{\{s\geq \tau\geq t\}}].
\end{split}
\end{align*}
So that
\begin{align*}
\begin{split}
I_2&=\mathbb{E}^{P^{\ast}\otimes_\tau Q}
\croc{{{\Pi_Y\otimes_{\tau(Y)} Q^Y}}
\croc{(M^f(s)-M^f(\tau))\varphi_t\mathbf{1}_{\{s\geq \tau\geq t\}}}}\\
&\hspace{1,4 cm}+\mathbb{E}^{P^{\ast}}[\varphi_t(M^f(\tau)-M^f(t))\mathbf{1}_{\{s\geq \tau\geq t\}}]\\
&=\mathbb{E}^{P^{\ast}\otimes_\tau Q}
\croc{{{\Pi_Y\otimes_{\tau(Y)} Q^Y}}
\croc{(M^f(s)-M^f(\tau(Y)))\varphi_t(Y)\mathbf{1}_{\{s\geq \tau(Y)\geq t\}}}}\\
&\hspace{1,4 cm}+\mathbb{E}^{P^{\ast}}[(M^f(\tau)-M^f(t))\varphi_t\mathbf{1}_{\{s\geq \tau\geq t\}}]
\end{split}
\end{align*}
where, once again, we have used \eqref{def-Pi-Proba} for the last line allowing us to replace $\tau$ by $\tau(Y)$, which is justified because $\tau$ is a stopping time.

From the definition of $(\hat{M}^f(s))_{s\in [\tau(Y),T]}$\eqref{eq:def-hatM} and $(\tilde{M}^f(s))_{s\in [0,T]}$\eqref{eq:def-Mftilde}, we find
\begin{align*}
\begin{split}
I_2&=\mathbb{E}^{P^{\ast}\otimes_\tau Q}\croc{{{\Pi_Y\otimes_{\tau(Y)} Q^Y}}\croc{(\hat{M}^f(s)-\hat{M}^f(\tau(Y)))\varphi_t(Y)\mathbf{1}_{\{s\geq \tau(Y)\geq t\}}}}\\
&\hspace{1,4 cm}+\mathbb{E}^{P^{\ast}}[(\tilde{M}^f(\tau)-\tilde{M}^f(t))\varphi_t\mathbf{1}_{\{s\geq \tau\geq t\}}]\\
&=\mathbb{E}^{P^{\ast}\otimes_\tau Q}\croc{{{\Pi_Y\otimes_{\tau(Y)} Q^Y}}\croc{(\hat{M}^f(s)-\hat{M}^f(\tau(Y)))}\varphi_t(Y)\mathbf{1}_{\{s\geq \tau(Y)\geq t\}}}\\
&\hspace{1,4 cm}+\mathbb{E}^{P^{\ast}}[(\tilde{M}^f(\tau)-\tilde{M}^f(t))\varphi_t\mathbf{1}_{\{s\geq \tau\geq t\}}]\\
&=\mathbb{E}^{P^{\ast}\otimes_\tau Q}\croc{\E^{Q^Y}\croc{\hat{M}^f(s)-\hat{M}^f(\tau(Y))}\varphi_t(Y)\mathbf{1}_{\{s\geq \tau(Y)\geq t\}}}\\
&\hspace{1,4 cm}+\mathbb{E}^{P^\ast}[(\tilde{M}^f(\tau)-\tilde{M}^f(t))\varphi_t\mathbf{1}_{\{s\geq \tau\geq t\}}]
\end{split}    
\end{align*}
where these equalities are justified by \eqref{def-Pi-Proba} and \eqref{def-Pi-Proba-2} together with the fact that $\hat{M}^f(s)-\hat{M}^f(\tau(Y))$ is $\sigma(\tilde{X}_{(\tau(Y)+t)\wedge T},s\geq 0)$ measurable on the set where $s\geq \tau(Y)$.

We now make use of the fact that $(\tilde{M}^f(s))_{s\in [0,T]}$ is a $(\Psi_u)_{u\in [0,T]}$ martingale under $P^{(x_\ast,i_\ast)}$ and that $(\hat{M}^f(s))_{s\in [\tau(Y),T]}$ is a $(\Psi_u)_{u\in [0,T]}$ martingale under $Q^Y$ after time $\tau(Y)$. 

Since $\tau$ is a bounded stopping time, we are in position to apply the optional sampling theorem for martingales and we get that
$I_2 = 0$.

The term $I_3$ is analyzed similarly and we retrieve $I_3 = 0$, which achieves the proof.
\end{proof}

\subsection{Finite order step concatenation of spider probability measures and tightness}

Let $n\in \mathbb{N}^*$. Consider the following subdivision $(t_j=(jT/n)\big)_{0\leq j \leq n}$ of $[0,T]$. We attach to $(t_j)_{0\leq j \leq n}$ the discretizing step function
$$
\eta_n~:~[0,T]\rightarrow [0,T],\;\;\;u\mapsto \eta_n(u) = \sum_{j=0}^n\,t_j{\bf 1}_{[t_j,t_{j+1})}(u).
$$

Let $(x_\ast,i_\ast,l_\ast)\in \mathcal{J}\times \R^+$. Using Proposition \ref{pr: exist proba conca}, we may define recursively a finite sequence of probability measure $(P^n_j)_{0\leq j \leq n}$ defined on $(\Phi,\mathbb{B}(\Phi))$ by setting inductively
$P^n_0 = P^{x_\ast,i_\ast,l_\ast}$
and for all $j\in [\![0,n-1]\!]$
$$
P^n_{j+1} = P^n_{j}\otimes_{t_{j+1}} P_{t_{j+1}}^{{x}(t_{j+1}), {i}(t_{j+1}),l(t_{j+1})}.
$$
We denote
\[
P^n:= P_n^n.
\]
We leave it to the reader to check inductively -- by mimicking the proof of Lemma 6.1.5 in \cite{Stroock} and using the same arguments as those used for the proof of Lemma \ref{lm : construction par morceaux} -- that the probability $P^n$ satisfies the following properties:\\
-(i) $\big(x(0),i(0)),l(0)\big)=\big(x_\ast,i_\ast,l_\ast\big)$, $P^n-\text{a.s}$.
\\ 
-(ii) For each $s \in [0,T]$: 
\begin{eqnarray*}
\displaystyle\displaystyle\int_{0}^{s}\mathbf{1}_{\{x(u)>0\}}dl(u)=0,~~P^{n}-\text{a.s.}
\end{eqnarray*}
-(iii) For any $f\in\mathcal{C}^{1,2}_b(\mathcal{J}_T)$, the following process: 
\begin{equation}
\label{eq:def-Vnf}
\begin{split}
&\Big (V_n^f(s):= f_{i(s)}(s,x(s))- f_{i}(0,x_\ast)\\
&\hspace{0,3 cm}-\displaystyle\int_0^{s}\big (\partial_tf_{i(u)}(u,x(u))+\displaystyle\frac{1}{2}\sigma_{i(u)}^2(\eta_n(u),x(u),l(\eta_n(u)))\partial_{x}^2f_{i(u)}(u,x(u))\\
&\hspace{1,3 cm}+b_{i(u)}(\eta_n(u),x(u),l(\eta_n(u)))\partial_xf_{i(u)}(u,x(u))\big )du~~
\\
&\hspace{0,3 cm}-~~\displaystyle\sum_{i=1}^{I}\int_0^{s}\alpha_i(\eta_n(u),l(\eta_n(u)))\partial_{x}f_{i}(u,0)dl(u)\Big )_{s\in [0,T]},
\end{split}
\end{equation} 
is a $(\Psi_s)_{0 \leq s \leq T}$ martingale under the probability measure $P^n$.
\medskip
\begin{Proposition}\label{pr: existence Brow-concatenation}
We have
\begin{eqnarray*}
&\forall s\in[0,T],~~d\langle x(\cdot)\rangle_{s}=\sigma^2_{i(s)}(\eta_n(s),x(s), l(\eta_n(s)))ds,\;\;~~P^{n}-\text{ a.s.}
\end{eqnarray*}
Moreover, there exists a $(\Psi_{s})_{0\leq s\leq T}$ standard one dimensional Brownian motion $W^n$, such that $P^{n} - \text{ a.s.}$: 

$\forall s\in[0,T],$
\begin{align}\label{eq : diff x-concatenation}
x(s)=x_\ast+\displaystyle\int_{0}^{s}\sigma_{i(u)}(\eta_n(u),x(u),l(\eta_n(u)))dW^n_u + \displaystyle\int_{0}^{s}b_{i(u)}(\eta_n(u),x(u),l(\eta_n(u)))du+l(s).
\end{align}
\end{Proposition}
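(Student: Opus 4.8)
The plan is to read off the semimartingale decomposition of the distance component $x(\cdot)$ under $P^n$ by feeding suitably chosen test functions into the martingale property \eqref{eq:def-Vnf}, and then to recover the driving Brownian motion through L\'evy's characterization theorem. Throughout, the two spider-specific mechanisms are that the coefficient of the local-time term collapses because $\alpha$ is valued in $\mathcal{P}([I])$, and that the measure $dl$ charges only $\{x=0\}$ (property (ii) of $P^n$).

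First I would apply property (iii) of $P^n$ to smooth bounded truncations $f^N$ of the function $f_i(t,x)=x$, taken identical on every edge so that the junction continuity $f^N_i(t,0)=f^N_j(t,0)$ holds trivially and $f^N$ coincides with $x\mapsto x$ on $[0,N]$ (whence $\partial_x f^N_i(\cdot,0)=1$, $\partial^2_{xx}f^N_i(\cdot,0)=0$). Localizing along $\tau_N:=\inf\{s:\,x(s)\ge N\}\wedge T$, which increase to $T$ by continuity of $x(\cdot)$, and using that $\sum_{i=1}^I\alpha_i(\eta_n(u),l(\eta_n(u)))=1$ to collapse the local-time sum to the single increasing term $\int_0^s dl(u)$ appearing in \eqref{eq : diff x-concatenation}, I obtain that
\[
M_s := x(s) - x_\ast - \int_0^s b_{i(u)}\pare{\eta_n(u),x(u),l(\eta_n(u))}\,du - \int_0^s dl(u)
\]
is a continuous $\pare{P^n,(\Psi_s)_{0\le s\le T}}$ local martingale.

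Next, feeding the same machinery the truncations of $f_i(t,x)=x^2$ (equal to $x^2$ near $0$, so that $\partial_x f_i(\cdot,0)=0$ and the local-time term now \emph{vanishes}) yields that
\[
N_s := x(s)^2 - x_\ast^2 - \int_0^s \pare{\sigma^2_{i(u)}\pare{\eta_n(u),x(u),l(\eta_n(u))} + 2x(u)\,b_{i(u)}\pare{\eta_n(u),x(u),l(\eta_n(u))}}\,du
\]
is also a continuous local martingale. To identify $\langle x\rangle$ I would apply It\^o's formula to $x(s)^2$ along the decomposition $x=x_\ast+M+A$, with $A_s=\int_0^s b\,du+\int_0^s dl(u)$ of finite variation, giving $x(s)^2-x_\ast^2=\int_0^s 2x\,dM_u+\int_0^s 2x\,b\,du+\int_0^s 2x\,dl(u)+\langle M\rangle_s$. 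The decisive point is that $\int_0^s 2x(u)\,dl(u)=0$ by property (ii) of $P^n$. Comparing this with $N_s$, the difference $\langle M\rangle_s-\int_0^s\sigma^2\,du$ is simultaneously a continuous local martingale, of finite variation, and null at $0$, hence identically zero; this proves $\langle x\rangle_s=\langle M\rangle_s=\int_0^s\sigma^2_{i(u)}(\eta_n(u),x(u),l(\eta_n(u)))\,du$, that is the first assertion $d\langle x\rangle_s=\sigma^2_{i(s)}(\eta_n(s),x(s),l(\eta_n(s)))\,ds$.

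Finally, to produce the Brownian motion I would use the ellipticity $\sigma_i\ge\underline{\sigma}>0$ from assumption $(\mathcal{H})$-$(\mathbf{E})$, which makes $1/\sigma_{i(u)}$ a bounded integrand, and set
\[
W^n_s := \int_0^s \frac{1}{\sigma_{i(u)}\pare{\eta_n(u),x(u),l(\eta_n(u))}}\,dM_u.
\]
Then $W^n$ is a continuous local martingale with $\langle W^n\rangle_s=\int_0^s\sigma^{-2}\,d\langle M\rangle_u=s$, so L\'evy's characterization identifies it as a standard $(\Psi_s)$-Brownian motion; inverting gives $dM_u=\sigma_{i(u)}\,dW^n_u$ and hence the representation \eqref{eq : diff x-concatenation}. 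The only genuinely delicate steps are the two localization arguments, namely building bounded truncations that respect the junction continuity while preserving the correct first two $x$-derivatives at $0$, and the vanishing of $\int 2x\,dl$ via the support property of the local time; everything else is the standard martingale-problem-to-SDE dictionary, here made compatible with the additional increasing coordinate $l$.
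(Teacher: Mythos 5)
Your proposal is correct and follows essentially the same route as the paper: extract the local martingale $\mathfrak{m}_s=x(s)-x_\ast-\int_0^s b\,du-\int_0^s dl(u)$ from the test function $x$ (using $\sum_i\alpha_i=1$ to collapse the boundary term), identify $\langle x\rangle$ by comparing It\^o's formula for $x^2$ with the martingale obtained from the test function $x^2$ (whose vanishing derivative at $0$ kills the local-time term, with $\int x\,dl=0$ from property (ii)), and then represent $\mathfrak{m}$ as $\int\sigma\,dW^n$ using ellipticity. The only cosmetic differences are that you make the localization of the unbounded test functions explicit (the paper glosses over this) and invoke L\'evy's characterization directly rather than citing the representation theorem for continuous local martingales in Revuz--Yor.
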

\begin{proof}
Observe that the coordinate function $\mathcal{J}_T\rightarrow \R^+$, $(t,x,i)\mapsto x$ belongs to $\mathcal{C}^{1,2}(\mathcal{J}_T)$. Hence, by (iii), we know that
\[
\pare{\mathfrak{m}_s:= x(s) - x_\ast - \int_0^s b_{i(u)}(\eta_n(u),x(u),l({\eta_n}(u)))du - l(s)}_{s\in [0,T]}
\]
is a $(\Psi_s)_{0 \leq s \leq T}$ local martingale under the probability measure $P^n$.
For simplicity, set for any $s\in [0,T]$
\[
a_s := x_\ast + \int_0^s b_{i(u)}(\eta_n(u),x(u),l({\eta_n}(u)))du + l(s).
\]
Observe that $(a_s)_{s\in [0,T]}$ is of finite variation so that necessarily $\langle \mathfrak{m}\rangle_t = \langle x(.)\rangle_t$ ($t\in [0,T]$). 
Then, applying It\^o's formula to $\pare{\mathfrak{m}_s}_{s\in [0,T]}$, we have for $t\in [0,T]$:
\begin{align}
\mathfrak{m}^2_t &= 2\int_0^t \mathfrak{m}_sd\mathfrak{m}_s + \langle \mathfrak{m}\rangle_t\nonumber\\
&=2\int_0^t (x(s) -a_s)d\pare{x(s) - a_s} + \langle\mathfrak{m}\rangle_t\\
&=2\int_0^t x(s)dx(s) - 2\int_0^t x(s)da_s - 2\int_0^t a(s)d\mathfrak{m}_s + \langle\mathfrak{m}\rangle_t\nonumber\\
&=x(t)^2 - x_\ast^2 - \langle x\rangle_t - 2\int_0^t x(s)da_s - 2\int_0^t a(s)d\mathfrak{m}_s + \langle\mathfrak{m}\rangle_t\nonumber\\
\label{eq:calcul-crochet}&=x(t)^2 - x_\ast^2- 2\int_0^t x(s)da_s - 2\int_0^t a(s)d\mathfrak{m}_s.
\end{align}
Observe now that the quadratic coordinate function $\mathcal{J}_T\rightarrow \R^+$, $(t,x,i)\mapsto x^2$ belongs also to $\mathcal{C}^{1,2}(\mathcal{J}_T)$. Hence, by (iii) and from the definition of $(a_s)_{s\in [0,T]}$, we have that
\[
\pare{\tilde{\mathfrak{m}}_s:= x^2(s) - x_\ast^2 - \int_0^s \sigma^2_{i(u)}(\eta_n(u),x(u),l({\eta_n}(u)))du -\int_0^s x(s)da_s}_{s\in [0,T]}
\]
is also a $(\Psi_s)_{0 \leq s \leq T}$ local martingale under the probability measure $P^n$. So that from \eqref{eq:calcul-crochet}
\begin{align*}
\mathfrak{m}^2_t 
&=\mathfrak{\tilde{m}}_t + \pare{\int_0^t \sigma^2_{i(s)}(\eta_n(s),x(s),l({\eta_n}(s)))ds  +  2\int_0^t x(s)da_s} - 2\int_0^t x(s)da_s - 2\int_0^t a(s)d\mathfrak{m}_s\\
&=\mathfrak{\tilde{m}}_t - 2\int_0^t a(s)d\mathfrak{m}_s + \int_0^t \sigma^2_{i(s)}(\eta_n(s),x(s),l({\eta_n}(s)))ds  
\end{align*}
which permits to identify
$$
\langle \mathfrak{m}\rangle_t = \langle x\rangle_t = \int_0^t \sigma^2_{i(s)}(\eta_n(s),x(s),l({\eta_n}(s)))ds
$$
since $\pare{\int_0^t \sigma^2_{i(s)}(\eta_n(s),x(s),l({\eta_n}(s)))ds}_{t\in [0,T]}$ is an increasing continuous process and\newline $\pare{\mathfrak{m}^2_t - \int_0^t \sigma^2_{i(s)}(\eta_n(s),x(s),l({\eta_n}(s)))ds}_{t\in [0,T]}$ is the $(\Psi_s)_{0 \leq s \leq T}$ local martingale given by \newline $\pare{\tilde{m}_t - 2\int_0^ta_sd{m}_s}_{t\in [0,T]}$.

The conclusion follows then from assumption $(\bf E)$ by applying the representation theorem for continuous local martingales stated in Proposition 3.8 Chapter V in \cite{Revuz-Yor} (p. 202) to  $(\mathfrak{m}_t)_{t\in [0,T]}$.
\end{proof}
\begin{Lemma}
\label{lemma:tightness-concatenation}
There exists a constant $C$, depending only on the data $(T,|b|,|\sigma|)$, such that:
\begin{align}
&\label{maj : moment-ordre-deux}\sup_{n\ge 0}\mathbb{E}^{P^n}\croc{|x|^2_{(0,T)}+ |l|^2_{(0,T)}}\leq C(1+x_\ast^{2}),\\
&\label{maj : modul de continuité}\forall \theta\in (0,1),\;\;\;\;\sup_{n\ge 0}\mathbb{E}^{P^n}\croc{\omega(X,\theta)^{2}+\omega(l,\theta)^{2}}\leq C\theta \ln(\displaystyle{2T}/{\theta}).
\end{align}
and
\begin{align}
\label{eq:modulus-continuity-X-concatenation}
&\forall \theta\in (0,1),\;\;\;\;\sup_{n\ge 0}{E^{P^n}\Big [\tilde{\omega}(\tilde{X},\theta)\Big ]\leq C\sqrt{\theta\ln(2T/\theta)}}.
\end{align}
\end{Lemma}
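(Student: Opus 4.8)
The plan is to read off the three bounds directly from the reflected equation \eqref{eq : diff x-concatenation} of Proposition \ref{pr: existence Brow-concatenation}, following verbatim the strategy one would use to treat \eqref{eq:equation-freidlin-sheu} in Lemma \ref{lem:modules-cont}; the only difference is that the coefficients are now evaluated at the mesh points $\eta_n$, so their sup-norms and one-sided increments remain controlled uniformly in $n$ by $(|b|,|\sigma|)$ through assumption $(\mathcal{H})$. First I would isolate the \emph{free part} $\xi_n(s):=x_\ast+\int_0^s\sigma_{i(u)}(\eta_n(u),x(u),l(\eta_n(u)))dW^n_u+\int_0^s b_{i(u)}(\eta_n(u),x(u),l(\eta_n(u)))du$, so that $x(s)=\xi_n(s)+\pare{l(s)-l_\ast}$, where by property (ii) the increment $l(\cdot)-l_\ast$ is non-decreasing, null at $0$, and grows only on $\{x=0\}$. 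Thus $\pare{x,\,l-l_\ast}$ solves the Skorokhod problem on $[0,\infty)$ driven by the continuous path $\xi_n$, and Skorokhod's lemma gives the explicit formula $l(s)-l_\ast=\max\pare{0,\sup_{u\le s}\pare{-\xi_n(u)}}$ together with the pathwise bounds $0\le l(s)-l_\ast\le\sup_{u\le s}|\xi_n(u)|$ and $0\le x(s)\le 2\sup_{u\le s}|\xi_n(u)|$.

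The moment bound \eqref{maj : moment-ordre-deux} then reduces to estimating $\mathbb{E}^{P^n}\croc{\sup_{s\le T}|\xi_n(s)|^2}$. Splitting off the drift, whose contribution is deterministically at most $T|b|$, and applying Doob's maximal inequality and the It\^o isometry to the martingale part (whose bracket is at most $|\sigma|^2T$), one gets $\mathbb{E}^{P^n}\croc{\sup_{s\le T}|\xi_n(s)|^2}\le C(1+x_\ast^2)$ with $C=C(T,|b|,|\sigma|)$; the two pathwise bounds above transfer this to $|x|_{(0,T)}^2$ and to the increment of $l$, the fixed initial value $l_\ast$ entering only as a harmless additive constant.

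For the $L^2$ modulus bound \eqref{maj : modul de continuité} I would argue in three reductions. First, the network modulus is dominated by the scalar one: whenever $i(s)\neq i(u)$ the path must cross $\mathbf 0$ at some intermediate time, whence $d^{\mathcal J}\pare{(x(s),i(s)),(x(u),i(u))}=x(s)+x(u)\le 2\,\omega(x,\theta)$ and therefore $\omega(X,\theta)\le 2\,\omega(x,\theta)$ for the real distance process. Second, the Skorokhod map is $1$-Lipschitz for the modulus, i.e. $\omega(l,\theta)=\omega(l-l_\ast,\theta)\le\omega(\xi_n,\theta)$, so that $\omega(x,\theta)\le 2\,\omega(\xi_n,\theta)$ and everything is governed by $\omega(\xi_n,\theta)$. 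Third, the drift contributes at most $|b|\theta$ to $\omega(\xi_n,\theta)$, while for the martingale part $N_s:=\int_0^s\sigma_{i(u)}(\eta_n(u),x(u),l(\eta_n(u)))dW^n_u$ the Dambis--Dubins--Schwarz theorem exhibits $N$ as a time-changed Brownian motion with Lipschitz clock ($d\langle N\rangle_s\le|\sigma|^2ds$); L\'evy's modulus of continuity (equivalently, the Garsia--Rodemich--Rumsey inequality applied to $N$) then yields $\mathbb{E}^{P^n}\croc{\omega(N,\theta)^2}\le C\theta\ln(2T/\theta)$, which is precisely where the logarithmic factor is produced. Collecting the three reductions gives \eqref{maj : modul de continuité}.

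Finally, \eqref{eq:modulus-continuity-X-concatenation} is an immediate corollary: since $\tilde\omega(\tilde X,\theta)\le\omega(X,\theta)+\omega(l,\theta)$, the Cauchy--Schwarz inequality and \eqref{maj : modul de continuité} give $\mathbb{E}^{P^n}\croc{\tilde\omega(\tilde X,\theta)}\le\pare{\mathbb{E}^{P^n}\croc{\omega(X,\theta)^2}}^{1/2}+\pare{\mathbb{E}^{P^n}\croc{\omega(l,\theta)^2}}^{1/2}\le C\sqrt{\theta\ln(2T/\theta)}$. Uniformity in $n$ is automatic throughout, every constant depending only on $(T,|b|,|\sigma|,\underline{\sigma})$ and never on the mesh. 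I expect the one genuinely delicate point to be the Brownian-type modulus estimate furnishing the factor $\ln(2T/\theta)$; by contrast the Skorokhod and network-geometry reductions are pathwise and elementary once the representation \eqref{eq : diff x-concatenation} is in hand, which is exactly why the argument runs parallel to the (postponed) proof of Lemma \ref{lem:modules-cont}.
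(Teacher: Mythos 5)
Your proposal is correct and follows essentially the same route as the paper: the paper's proof of this lemma simply repeats the appendix proof of Lemma \ref{lem:modules-cont}, which likewise writes $x=\xi_n+(l-l_\ast)$ (their process $y$), invokes the Skorokhod reflection formula to bound $l-l_\ast$ and $\omega(l,\theta)$ by the supremum and modulus of the free It\^o part, and transfers the network modulus to the scalar one via the crossing-time argument. The only divergence is cosmetic: for the $L^2$ modulus of the It\^o part the paper cites Theorem 1 of \cite{Fischer}, whereas you rederive it via Dambis--Dubins--Schwarz and a quantitative (Garsia--Rodemich--Rumsey type) Brownian modulus bound, which is an acceptable self-contained substitute.
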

\begin{proof}
Using the description \eqref{eq : diff x-concatenation} for the process $(x(t))_{t\geq 0}$, we claim that the inequalities \eqref{maj : moment-ordre-deux} and \eqref{maj : modul de continuité} can be retrieved repeating the same arguments as those used for the derivation of \eqref{eq:majoration-moments-ordre-deux} and \eqref{eq:modulus-continuity-x-l} (see the Appendix \ref{sk: ineg modulus inequality} for the proof of \eqref{eq:majoration-moments-ordre-deux} and \eqref{eq:modulus-continuity-x-l}).

\end{proof}
\begin{Corollary}
\label{corollary-relat-compactness}
The sequence $(P^n)$ is relatively compact for the weak topology on $\mathcal{P}\pare{\Phi,\B(\Phi)}$.
\end{Corollary}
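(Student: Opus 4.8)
The plan is to invoke Prokhorov's theorem: since $\Phi=\mathcal{C}^{\mathcal{J}}[0,T]\times\mathcal{L}[0,T]$ is a Polish space, the sequence $(P^n)$ is relatively compact for the weak topology if and only if it is tight. It therefore suffices to produce, for every $\varepsilon>0$, a compact set $K_\varepsilon\subset\Phi$ such that $\inf_{n}P^n(K_\varepsilon)\geq 1-\varepsilon$. The compact sets will be built by combining a uniform bound on the amplitude of the paths with a uniform control on their modulus of continuity, which are precisely the two types of estimates supplied by Lemma \ref{lemma:tightness-concatenation}.

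First I would recall the Arzel\`a--Ascoli characterisation adapted to the (non-compact) Polish target $\mathcal{J}$: a subset $A\subset\mathcal{C}^{\mathcal{J}}[0,T]$ is relatively compact if and only if $\sup_{X\in A}\sup_{t}d^{\mathcal{J}}(X(t),\mathbf{0})<+\infty$ and $\lim_{\theta\searrow 0}\sup_{X\in A}\omega(X,\theta)=0$; the first condition confines the paths to a fixed closed $d^{\mathcal{J}}$-ball centred at $\mathbf{0}$, and the analogous statement holds on $\mathcal{L}[0,T]$ with $\omega(l,\cdot)$. Consequently, for a constant $R>0$ and two sequences $\theta_m\searrow 0$, $\eta_m\searrow 0$, the set
\[
K:=\Big\{(x,i,l)\in\Phi~\big\vert~|x|_{(0,T)}\leq R,\ |l|_{(0,T)}\leq R,\ \omega(X,\theta_m)+\omega(l,\theta_m)\leq\eta_m\ \forall m\geq 1\Big\}
\]
is a closed, hence compact, subset of $\Phi$, the functionals $|\cdot|_{(0,T)}$ and $\omega(\cdot,\theta)$ being continuous for the uniform topology (note that $\sup_t d^{\mathcal{J}}(X(t),\mathbf{0})=|x|_{(0,T)}$ since $x$ is the radial coordinate).

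It then remains to estimate $P^n(K^c)$ uniformly in $n$ by Markov's inequality. Using the moment bound \eqref{maj : moment-ordre-deux} I would choose $R=R_\varepsilon$ so large that
\[
\sup_{n}P^n\pare{|x|_{(0,T)}>R_\varepsilon}+\sup_{n}P^n\pare{|l|_{(0,T)}>R_\varepsilon}\leq\frac{C(1+x_\ast^2)}{R_\varepsilon^2}\leq\frac{\varepsilon}{2}.
\]
For the equicontinuity, estimate \eqref{maj : modul de continuité} gives $\sup_n\mathbb{E}^{P^n}\croc{\omega(X,\theta)^2+\omega(l,\theta)^2}\leq C\theta\ln(2T/\theta)$, so by Markov's inequality $\sup_n P^n\pare{\omega(X,\theta)+\omega(l,\theta)>\eta}\leq 2C\theta\ln(2T/\theta)/\eta^2$; I would then pick $\eta_m=1/m$ and $\theta_m$ small enough that this bound is at most $\varepsilon\,2^{-(m+1)}$ (alternatively, \eqref{eq:modulus-continuity-X-concatenation} bounds the joint modulus $\tilde{\omega}(\tilde{X},\theta)$ directly). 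A union bound over the defining events of $K$ yields $\sup_n P^n(K^c)\leq\varepsilon$, i.e. $\inf_n P^n(K)\geq 1-\varepsilon$, and setting $K_\varepsilon:=K$ establishes tightness. Prokhorov's theorem then gives the claimed relative compactness.

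The only genuinely delicate point is the Arzel\`a--Ascoli step: because $\mathcal{J}$ is not compact and the paths take values in a metric graph rather than in $\R^d$, one must justify that uniform boundedness of $\sup_t d^{\mathcal{J}}(X(t),\mathbf{0})$ together with a uniform modulus of continuity really yields relative compactness of $\mathcal{J}$-valued paths. This reduces to the observation that closed $d^{\mathcal{J}}$-balls centred at $\mathbf{0}$ are compact (a finite union of compact intervals glued at $\mathbf{0}$, which uses that $\mathcal{J}$ has finitely many edges), after which the classical proof of Arzel\`a--Ascoli for continuous maps into a metric space applies verbatim. Everything else is a routine combination of Markov's inequality with the estimates of Lemma \ref{lemma:tightness-concatenation}.
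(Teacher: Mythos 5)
Your proof is correct and follows essentially the same route as the paper's: Markov's inequality applied to the estimates of Lemma \ref{lemma:tightness-concatenation} to build sets that are uniformly bounded and uniformly equicontinuous, an Arzel\`a--Ascoli argument for paths valued in the Polish space $\mathcal{J}\times\R^+$ (where, as you rightly note, compactness of closed $d^{\mathcal{J}}$-balls rests on $I$ being finite), and then Prokhorov's theorem. The only cosmetic difference is that the paper works with the joint modulus $\tilde{\omega}(\tilde{X},\cdot)$ and a single amplitude bound rather than treating the $X$ and $l$ components separately.
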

\begin{Remark}
\label{rem:tension-spider}
This result cannot be deduced directly from Lemma \ref{lemma:tightness-concatenation} by using standard results (such as Theorem 3.21 in \cite{J-S} Chapter VI). Up to our knowledge there is no tightness criteria for sequences of probability measures on a general Polish space. For an account on these problems, we mention \cite{Meziani} when the metric space under consideration is a Banach space (which is not our case here since the spider web space is not even a vector field). 
\medskip

However, observe that Prokhorov's theorem is valid for general Polish spaces (see Theorem 3.5 \cite{J-S} Chapter VI p.311). Thus, we can prove below Corollary \ref{corollary-relat-compactness} by following classical arguments (see for e.g. the proof of the characterization Theorem 3.21 in \cite{J-S} Chapter VI p. 314):  the key fact here is that we are working on a Polish space whose topology remains induced by a uniform metric (the distance $d^{\Phi}$), so that we are still in position to apply Ascoli-Arzela's theorem.
\end{Remark}
\begin{proof} {\it (of Corollary \ref{corollary-relat-compactness})}

Fix $\varepsilon>0$. From the result of Lemma \ref{lemma:tightness-concatenation}, using Markov's inequality, it is not hard to see that there exists a constant $K_{\varepsilon} < \infty$ and $\theta_{\varepsilon,\,k}>0$ satisfying
\begin{align*}
&\sup_{n\geq 0} P^n\pare{\sup_{0\leq t\leq T} (|x(t)| + |i(t)| + |l(t)|) > K_{\varepsilon}}\leq \frac{\varepsilon}{2}\\
&\sup_{n\geq 0} P^n\pare{\tilde{\omega}(\tilde{X},\theta_{\varepsilon,\,k})>\frac{1}{k}}\leq \frac{\varepsilon}{2^k},
\end{align*}
(take $\theta_{\varepsilon,\,k}>0$ so small that $\theta_{n,k}\ln(2T/\theta_{n,k})\leq \varepsilon^2/(C^2k^22^{2k})$ where $C$ stands for the constant appearing in Lemma \ref{lemma:tightness-concatenation}).

Then 
\[
A_{\varepsilon} = \left \{\gamma\in \Phi~:~\sup_{0\leq t\leq T}|\gamma(t)|\leq K_{\varepsilon},\;\;\tilde{\omega}(\gamma,\theta_{\varepsilon,\,k})\leq \frac{1}{k}\;\;\text{ for all }k\in \N^\ast\right \}.
\]
Therefore,
\[
P^n\pare{\tilde{X}\notin A_{\varepsilon}}\leq P^n\pare{\sup_{0\leq t\leq T} |\tilde{X}(t)| > K_{\varepsilon}} + \sum_{k=1}^\infty P^n\pare{\tilde{\omega}(\tilde{X},\theta_{\varepsilon,\,k})>\frac{1}{k}}\leq \varepsilon.
\]
We claim that $A_{\varepsilon}$ is relatively compact in $\Phi$. Indeed, this may be seen as a consequence of Ascoli-Arzela's theorem applied to the vector space of continuous functions on the compact set $[0,T]$ with values in the Polish space $\pare{\mathcal{J}\times \R^+, d^{\mathcal{J}}((a,i),(b,j)) + |l-l'|}$ endowed with the corresponding uniform metric. Hence, we have
\[
P^n\pare{\tilde{X}\notin \overline{A_{\varepsilon}}}\leq \varepsilon.
\]
The sequence $(P^n)_{n\in \N}$ is tight and the result is a consequence of Prokhorov's theorem applied on $(\Phi, \B(\Phi))$ (see for e.g. Theorem 3.5 \cite{J-S} Chapter VI p.311).
\end{proof}
From the result of the above Corollary \ref{corollary-relat-compactness}, the sequence $(P^n)$ converges weakly - up to a sub sequence $(n_k)$ - to a probability measure $P\in \mathcal{P}(\Phi, \B(\Phi))$.

We end this subsection with the following crucial result:
\begin{Proposition}('non-stickiness' of $P$)
\label{prop:non-stickness-1}

Assume assumption $(\mathcal{H})$ and let $P$ constructed as above. Then, there exists a constant $C>0$, depending only on the data $\Big(T,|b|,|\sigma|,c,x_\ast\Big)$ such that
\begin{eqnarray*}\forall \varepsilon>0,~~\mathbb{E}^{P}\croc{\int_{0}^{T}\mathbf{1}_{\{x(s)<\varepsilon\}}ds}\leq C\varepsilon.
\end{eqnarray*}
In particular,
\begin{equation*}
\mathbb{E}^{P}\croc{\int_0^T\mathbf{1}_{\{x(s)=0\}}ds}=0\;\;\text{\;and\;}~~ \int_0^T\mathbf{1}_{\{x(s)=0\}}ds~~P-\text{a.s.}
\end{equation*}
\end{Proposition}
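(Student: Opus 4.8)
The plan is to first establish the estimate at the level of the approximating measures $(P^n)$ with a constant that does not depend on $n$, and then to transfer it to the weak limit $P$ by a lower semicontinuity argument. For the uniform estimate I would simply replay the proof of Lemma \ref{lem:non-stickness}, but now under $P^n$ and relying on the It\^o representation \eqref{eq : diff x-concatenation} furnished by Proposition \ref{pr: existence Brow-concatenation} in place of \eqref{eq:equation-freidlin-sheu}. The crucial observation is that the frozen coefficients $\sigma_{i(\cdot)}(\eta_n(\cdot),\cdot,\cdot)$ and $b_{i(\cdot)}(\eta_n(\cdot),\cdot,\cdot)$ are still bounded by $|\sigma|$ and $|b|$ and remain uniformly elliptic with the same constant $\underline{\sigma}$, so that every constant produced along the way is independent of $n$.

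Concretely, in a first step I would apply It\^o's formula to $\phi(x)=\exp(Mx)-Mx-1$ with $M=2|b|/\underline{\sigma}^2$ along \eqref{eq : diff x-concatenation}; since $\phi'(0)=0$ the local time term $\phi'(x(s))\,dl(s)$ vanishes, and Gr\"onwall's lemma yields $\sup_n \mathbb{E}^{P^n}[\exp(Mx(T))]\leq C$ for a constant $C$ depending only on $(T,|b|,|\sigma|,x_\ast)$. In a second step I would use the very same auxiliary function $u^\varepsilon$ solving \eqref{eq :pde parab estima}: extended identically on each edge it belongs to $\mathcal{C}^{1,2}_b(\mathcal{J}_T)$ (continuity at the vertex holds since $u^\varepsilon(0)=0$), and because $\partial_x u^\varepsilon(0)=0$ the $dl$-contribution in the martingale \eqref{eq:def-Vnf} associated to $P^n$ disappears. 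Testing the martingale property \eqref{eq:def-Vnf} defining $P^n$ with $f=u^\varepsilon$ (after the usual localization argument), the choice of $M$ together with ellipticity gives, exactly as in Lemma \ref{lem:non-stickness},
\[
\mathbb{E}^{P^n}\croc{\int_0^T \mathbf{1}_{\{x(u)\leq \varepsilon\}}\,du}\leq \mathbb{E}^{P^n}\croc{u^\varepsilon(x(T))-u^\varepsilon(x_\ast)}\leq \frac{4\varepsilon}{M\underline{\sigma}^2}\,\mathbb{E}^{P^n}\croc{\exp(Mx(T))-1},
\]
so that, combining with the first step, $\sup_n \mathbb{E}^{P^n}[\int_0^T \mathbf{1}_{\{x(s)<\varepsilon\}}\,ds]\leq C\varepsilon$.

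It then remains to pass to the limit along the weakly convergent subsequence $P^{n_k}\to P$. As already observed in the proof of Proposition \ref{pr: conti noyau}, the functional $X=(x,i)\mapsto \int_0^T \mathbf{1}_{\{x(s)<\varepsilon\}}\,ds$ is lower semicontinuous on $\mathcal{C}^{\mathcal{J}}[0,T]$ (if $x_n\to x$ uniformly then $\liminf_n \mathbf{1}_{\{x_n(s)<\varepsilon\}}\geq \mathbf{1}_{\{x(s)<\varepsilon\}}$ pointwise, and Fatou's lemma applies). The portmanteau theorem for nonnegative lower semicontinuous functionals then gives $\mathbb{E}^P[\int_0^T \mathbf{1}_{\{x(s)<\varepsilon\}}\,ds]\leq \liminf_k \mathbb{E}^{P^{n_k}}[\int_0^T \mathbf{1}_{\{x(s)<\varepsilon\}}\,ds]\leq C\varepsilon$, which is the first claim. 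Finally, letting $\varepsilon\searrow 0$ and noticing that $\{x(s)<\varepsilon\}\downarrow\{x(s)=0\}$ since $x\geq 0$, the dominated convergence theorem yields $\mathbb{E}^P[\int_0^T \mathbf{1}_{\{x(s)=0\}}\,ds]=0$, whence $\int_0^T \mathbf{1}_{\{x(s)=0\}}\,ds=0$ holds $P$-almost surely.

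The only genuinely delicate point --- and the step I expect to require the most care --- is checking that the argument of Lemma \ref{lem:non-stickness} transfers \emph{verbatim} to the concatenated measures $P^n$: namely that $u^\varepsilon$ is an admissible test function in the martingale problem defining $P^n$, that its local-time contribution indeed vanishes (thanks to $\partial_x u^\varepsilon(0)=0$), and that freezing the coefficients at the mesh points $\eta_n$ preserves the uniform bounds $|b|,|\sigma|,\underline{\sigma}$ used to fix $M$ and to run Gr\"onwall's estimate. Once this is granted, everything else is a routine repetition of computations already performed in the excerpt.
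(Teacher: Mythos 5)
Your proposal is correct and follows essentially the same route as the paper: a uniform-in-$n$ estimate obtained by replaying the proof of Lemma \ref{lem:non-stickness} under $P^{n}$ via the It\^o representation \eqref{eq : diff x-concatenation} of Proposition \ref{pr: existence Brow-concatenation}, followed by lower semicontinuity of $x\mapsto\int_0^T\mathbf{1}_{\{x(s)<\varepsilon\}}ds$ and weak convergence to transfer the bound to $P$. The paper states these two steps more tersely, but the content, including the points you flag as delicate (admissibility of $u^\varepsilon$, vanishing of the $dl$-term via $\partial_x u^\varepsilon(0)=0$, and uniformity of the constants under coefficient freezing), is exactly what its proof relies on.
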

\begin{proof}
By applying the same arguments as those given in the proof of Lemma \ref{lem:non-stickness} (using in force the result of Proposition \ref{pr: existence Brow-concatenation} - \eqref{eq : diff x-concatenation}), we prove that there exists a constant $C>0$, depending only on the data $\Big(T,|b|,|\sigma|,c,x_\ast\Big)$, such that
\begin{eqnarray*}\forall \varepsilon>0,~~\forall k\ge 0,~~\mathbb{E}^{P^{n_k}}\croc{\int_{0}^{T}\mathbf{1}_{\{x(s)<\varepsilon\}}ds}~~\leq~~C\varepsilon.
\end{eqnarray*}
Making use of the fact that the following functional: $\beta:\mathcal{C}[0,T]\to \R,~~x(.)\mapsto \displaystyle \int_t^T\mathbf{1}_{\{x(s)<\varepsilon\}}ds$ is lower semi continuous, we get that:
\begin{eqnarray*}\forall \varepsilon>0,~~\mathbb{E}^{P}\Big[~~\int_{0}^{T}\mathbf{1}_{\{x(s)<\varepsilon\}}ds~~\Big]\leq\liminf_{k \to +\infty}\mathbb{E}^{P^{n_k}}\Big[~~\int_{0}^{T}\mathbf{1}_{\{x(s)<\varepsilon\}}ds~~\Big]~~\leq~~C\varepsilon.
\end{eqnarray*}
\end{proof}

\section{Existence: construction of a Walsh spider diffusion with dependent local time spinning measure}\label{sec: preuve et paths properties}
This section is dedicated to the proof of the existence part of Theorem \ref{th: exist Spider }.
\subsection{Proof of Theorem \ref{th: exist Spider } I - Existence}
\begin{proof} {\it{Theorem \ref{th: exist Spider } I - Existence}}

Let $(x_\ast,i_\ast,l_\ast)\in \mathcal{J}\times \R^+$. Let us assume assumption $(\mathcal{H})$ and denote by $P$ a probability on $(\Phi, \B(\Phi))$ constructed as above. 

We claim that the first point (i) is satisfied by construction.
Indeed, using Portmanteau's theorem to the closed set
$\{\tilde{X}\in \Phi, \tilde{X}(0)=((x(0),i(0),l(0)) = (x_\ast,i_\ast,l_\ast)\}$, we have
\begin{align}
P((x(0),i(0),l(0))=(x_\ast,i_\ast,l_\ast))\geq \limsup P^n((x(0),i(0),l(0))=(x_\ast,i_\ast,l_\ast)) = 1.
\end{align}.
For (ii), we have from Proposition \ref{pr : lower semi continuity}, that the following map:
\begin{eqnarray*}
\rho:\begin{cases}
\mathcal{C}^{\mathcal{J}}[0,T]\times \mathcal{L}[0,T]\to\mathbb{R}\\
\Big(\big(x(\cdot),i(\cdot)\big),l(\cdot)\Big)\mapsto \displaystyle\int_{0}^T\mathbf{1}_{\{x(u)>0\}}l(du)
\end{cases}
\end{eqnarray*}
is lower semi continuous. Consequently, the following set $O$ defined by
\begin{eqnarray*}
&O:=\Big\{\Big(\big(x,i\big),l\Big)\in\mathcal{C}^{\mathcal{J}}[0,T]\times \mathcal{L}[0,T],~~\displaystyle\int_{0}^T\mathbf{1}_{\{x(u)>0\}}l(du)>0\Big\},
\end{eqnarray*}
is open in $\mathcal{C}^{\mathcal{J}}[0,T]\times \mathcal{L}[0,T]$. Hence, using Portmanteau's theorem for open sets, we have:
$$ 0=\liminf_{k \to +\infty}P^{n_k}(O)\ge P(O),$$
which means that (ii) holds true.

Finally, let us show point (iii). 

Recall the notations \eqref{eq:def-V} and \eqref{eq:def-Vnf}. Using assumption $(\mathcal{H})$, Lebesgue's theorem with the non stickiness result of Proposition \ref{prop:non-stickness-1}, it is not hard to deduce that $P$ almost surely:
\begin{eqnarray*}
&\lim_{k\to +\infty} \Big|V^f_{n_k}(\cdot) - V^f(\cdot) \Big|_{(0,T)}~=~0.
\end{eqnarray*}
Now fix $s\in [0,T]$ and $u\in[0,T]$, with $u\ge s$. Let $\varphi\in \mathcal{C}_b(\Phi,\R)$, $\Psi_s$ measurable, and let $f\in\mathcal{C}^{1,2}_b(\mathcal{J}_T)$. Using Lebesgue's theorem for weakly convergence measures (see Corollary \ref{cr Lebesgue faible} in Appendix), we have that:
\begin{eqnarray*}
\lim_{k\to + \infty}\mathbb{E}^{P^{n_k}} \croc{\varphi_s(V^{f}_{n_k}(u)-V^{f}_{n_k}(s))}~=~\mathbb{E}^{P}\croc{\varphi_s(V^{f}(u)-V^{f}(s))}.
\end{eqnarray*}
Hence, for any $f\in\mathcal{C}^{1,2}_b(\mathcal{J}_T)$ $(V^f(s))_{0\leq s\leq T}$ is a $(\Psi_s)_{0 \leq s \leq T}$ martingale under the measure of probability $P$, namely point (iii) of Theorem \ref{th: exist Spider } is satisfied.
\end{proof}

For the remaining of this paper, given $(x_\ast,i_\ast, l_\ast)\in \mathcal{J}\times \R^+$, we will denote by $\P^{x_\ast,i_\ast,l_\ast}$ a probability defined on the measurable space $(\Phi,\mathbb{B}(\Phi))$ solution of the martingale problem \emph{$\big(\mathcal{S}_{pi}-\mathcal{M}_{ar}\big)$} and satisfying $$\P^{x_\ast,i_\ast,l_\ast}\pare{(x(0), i(0), l(0)) = (x_\ast, i_\ast, l_\ast)} = 1.$$  

\subsection{The first component as a reflected process}
Next Proposition \ref{pr: existence Brow} characterizes the paths of the process $(x(t))_{0\leq t\leq T}$, by showing that its martingale part can be represented as a Brownian integral.
\begin{Proposition}\label{pr: existence Brow}
We have:
\begin{eqnarray*}
&\forall s\in[0,T],~~d\langle x(\cdot)\rangle_{s}~~=~~\sigma^2_{i(s)}(s,x(s))ds,\;\;\P^{x_\ast,i_\ast,l_\ast}-\text{ a.s.}
\end{eqnarray*}
Moreover, there exists a $(\Psi_{s})_{0\leq s\leq T}$ standard one dimensional Brownian motion $W$, such that $\P^{x_\ast,i_\ast,l_\ast} - \text{ a.s.}$:  
\begin{eqnarray*}\label{eq : diff x}
\nonumber&\forall s\in[0,T],~~x(s)=x+\displaystyle\int_{0}^{s}b_{i(u)}(u,x(u),l(u))du+\displaystyle\int_{0}^{s}\sigma_{i(u)}(u,x(u),l(u))dW(u)+l(s).
\end{eqnarray*}
\end{Proposition}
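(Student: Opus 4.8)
The plan is to reproduce, for a solution $\P^{x_\ast,i_\ast,l_\ast}$ of the genuine martingale problem $\big(\mathcal{S}_{pi}-\mathcal{M}_{ar}\big)$, the computation already carried out for the concatenated measures in Proposition \ref{pr: existence Brow-concatenation}. The two ingredients are the simplest admissible test functions: the coordinate map $g(t,(x,i)):=x$ and its square $h(t,(x,i)):=x^2$. Both fulfill the junction continuity condition, since $g_i(t,0)=h_i(t,0)=0$ on every branch $i$. As $g$ and $h$ are unbounded they do not belong to $\mathcal{C}^{1,2}_b(\mathcal{J}_T)$ as such; I would therefore apply property (iii) to bounded $\mathcal{C}^{1,2}_b(\mathcal{J}_T)$ cut-offs agreeing with $g$ (resp.\ $h$) on $\{x\leq k\}$ and localize along the stopping times $\tau_k:=\inf\{s:x(s)\geq k\}\wedge T$, which increase to $T$ $\P^{x_\ast,i_\ast,l_\ast}$-a.s.\ by continuity of the paths on the compact interval $[0,T]$. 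This yields the local martingale property below without any moment estimate.

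First I would feed $g$ into property (iii). Since $\partial_t g=\partial_{xx}^2 g=0$, $\partial_x g\equiv 1$, $\partial_x g_j(u,0)=1$ and $\sum_{j=1}^I\alpha_j(u,l(u))=1$, the associated process collapses (up to the harmless additive constant $l_\ast$, which does not affect the local martingale property) to
\[
\mathfrak{m}_s:=x(s)-x_\ast-\int_0^s b_{i(u)}(u,x(u),l(u))\,du-l(s),
\]
a continuous $\big(\P^{x_\ast,i_\ast,l_\ast},(\Psi_s)\big)$ local martingale. Setting $a_s:=x_\ast+\int_0^s b_{i(u)}(u,x(u),l(u))\,du+l(s)$, the process $(a_s)$ has finite variation (absolutely continuous drift plus the continuous non-decreasing $l$), so that $\langle\mathfrak{m}\rangle=\langle x(\cdot)\rangle$.

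Next I would feed $h$ into property (iii). Here $\partial_x h=2x$, $\partial_{xx}^2 h=2$, and the junction term vanishes because $\partial_x h_j(u,0)=0$; hence
\[
\tilde{\mathfrak{m}}_s:=x^2(s)-x_\ast^2-\int_0^s\pare{\sigma_{i(u)}^2(u,x(u),l(u))+2x(u)b_{i(u)}(u,x(u),l(u))}\,du
\]
is a continuous local martingale too. Applying It\^o's formula to $x^2$ with $dx=d\mathfrak{m}+b\,du+dl$, and using property (ii) in its decisive form $\int_0^s x(u)\,dl(u)=0$ (because $dl$ charges only $\{x=0\}$), I would get $x^2(s)=x_\ast^2+2\int_0^s x\,d\mathfrak{m}+2\int_0^s x\,b\,du+\langle x(\cdot)\rangle_s$. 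Comparing this with the expression of $\tilde{\mathfrak{m}}$ shows that $\langle x(\cdot)\rangle_s-\int_0^s\sigma_{i(u)}^2(u,x(u),l(u))\,du$ is at once a continuous local martingale and of finite variation, null at $0$, hence identically zero. This delivers $d\langle x(\cdot)\rangle_s=\sigma_{i(s)}^2(s,x(s),l(s))\,ds$.

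Finally, ellipticity $(\mathcal{H})-({\bf E})$ gives $\sigma_{i(s)}(s,x(s),l(s))\geq\underline{\sigma}>0$, so $W_s:=\int_0^s\sigma_{i(u)}^{-1}(u,x(u),l(u))\,d\mathfrak{m}_u$ is a well-defined continuous local martingale with $\langle W\rangle_s=s$; by the representation theorem for continuous local martingales (Proposition 3.8, Chapter V in \cite{Revuz-Yor}), $W$ is a standard Brownian motion and $\mathfrak{m}_s=\int_0^s\sigma_{i(u)}(u,x(u),l(u))\,dW_u$, which is exactly the reflected It\^o representation in the statement. I expect no serious obstacle: the only delicate points are the localization of the unbounded test functions $g,h$ and the systematic use of property (ii) to discard the $\int x\,dl$ contribution, both being mild rereadings of the argument of Proposition \ref{pr: existence Brow-concatenation}.
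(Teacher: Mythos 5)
Your proposal is correct and follows essentially the same route as the paper, which simply refers back to the proof of Proposition \ref{pr: existence Brow-concatenation}: test the martingale property (iii) on the coordinate map $x$ and on $x^2$, identify the bracket by comparing the two resulting local martingales via It\^o's formula, and conclude with ellipticity and the representation theorem for continuous local martingales. Your added care about localizing the unbounded test functions and the explicit use of (ii) to discard $\int x\,dl$ are welcome refinements of details the paper leaves implicit.
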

\begin{proof}
Relying on the property (iii) of Theorem \ref{th: exist Spider }, the proof of Proposition \ref{pr: existence Brow} follows exactly the same lines as in the proof of Proposition \ref{pr: existence Brow-concatenation}: we do not repeat it here.
\end{proof}

As consequence of Proposition \ref{pr: existence Brow} or using the martingale property, we are allowed to repeat all of the arguments in Lemma \ref{lem:non-stickness} to get:
\begin{Proposition}('Non stickiness' property)
\label{prop:non-stickness}
Assume assumption $(\mathcal{H})$. Then, there exists a constant $C>0$, depending only on the data $\Big(T,|b|,|\sigma|,c,x_\ast\Big)$ such that
\begin{eqnarray*}\forall \varepsilon>0,~~\mathbb{E}^{\P^{x_\ast,i_\ast,l_\ast}}\croc{\int_{0}^{T}\mathbf{1}_{\{x(s)<\varepsilon\}}ds}\leq C\varepsilon.
\end{eqnarray*}
In particular,
\begin{equation*}
\mathbb{E}^{\P^{x_\ast,i_\ast,l_\ast}}\croc{\int_0^T\mathbf{1}_{\{x(s)=0\}}ds}=0\;\;\text{\;and\;}~~ \int_0^T\mathbf{1}_{\{x(s)=0\}}ds,\;\;~\P^{x_\ast,i_\ast,l_\ast}-\text{a.s.}
\end{equation*}
\end{Proposition}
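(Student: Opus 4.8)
The plan is to mirror exactly the two-step scheme of Lemma~\ref{lem:non-stickness}, the only genuine difference being that the coefficients now depend on the full triple $(s,x(s),l(s))$. However, since Assumption~$(\mathcal{H})$ provides the uniform bounds $\underline{\sigma}\le \sigma_i\le |\sigma|$ and $|b_i|\le |b|$ irrespective of the time and local-time arguments, this extra dependence is harmless and all the one-dimensional estimates go through verbatim. Throughout I would work under $\P^{x_\ast,i_\ast,l_\ast}$ and use the It\^o-reflected representation of $(x(s))_{0\le s\le T}$ supplied by Proposition~\ref{pr: existence Brow}, which plays here the role that \eqref{eq:equation-freidlin-sheu} played in the proof of Lemma~\ref{lem:non-stickness}.

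First I would establish the uniform exponential moment bound $\mathbb{E}^{\P^{x_\ast,i_\ast,l_\ast}}[\exp(Mx(T))]\le C$ with $M=2|b|/\underline{\sigma}^2$. Applying It\^o's formula to $\phi(x)=\exp(Mx)-Mx-1$ along the representation of Proposition~\ref{pr: existence Brow}, the crucial structural observation is that $\phi'(0)=0$, so the contribution of the increasing process $dl(s)$ vanishes (recall from point (ii) that $l$ increases only on $\{x=0\}$); after a standard localization to control the stochastic integral, the drift and second-order terms are dominated using $\underline{\sigma}\le\sigma_i\le|\sigma|$ and $|b_i|\le|b|$, and Gr\"onwall's lemma on $[0,T]$ yields the bound, with $C$ depending only on $(T,|b|,|\sigma|,\underline{\sigma},x_\ast)$.

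Next, for fixed $\varepsilon>0$ I would reuse verbatim the test function $u^\varepsilon\in\mathcal{C}^2([0,+\infty))$ solving \eqref{eq :pde parab estima} with the Neumann condition $\partial_x u^\varepsilon(0)=0$, together with its explicit bounds \eqref{eq : conditions0 fonction test}. Since $u^\varepsilon$ depends on $x$ alone and $\partial_x u^\varepsilon(0)=0$, the Kirchhoff/local-time sum $\sum_j\int_0^s\alpha_j(u,l(u))\partial_x u^\varepsilon(0)\,dl(u)$ appearing in the martingale \eqref{eq:def-V} identically vanishes, which is exactly why the Neumann boundary condition is imposed. Applying point (iii) of Theorem~\ref{th: exist Spider } to $f=u^\varepsilon$ (after localization) and using, as in Lemma~\ref{lem:non-stickness}, the pointwise lower bound $\tfrac12\sigma_i^2\partial_{xx}^2u^\varepsilon+b_i\partial_x u^\varepsilon\ge \tfrac12\sigma_i^2(\partial_{xx}^2u^\varepsilon-M\partial_x u^\varepsilon)\ge \beta^\varepsilon(x)\ge \mathbf{1}_{\{x<\varepsilon\}}$ (valid because $\partial_x u^\varepsilon\ge 0$ and $b_i/(\tfrac12\sigma_i^2)\ge -M$), I obtain $\mathbb{E}^{\P^{x_\ast,i_\ast,l_\ast}}\big[\int_0^T\mathbf{1}_{\{x(s)<\varepsilon\}}ds\big]\le \mathbb{E}^{\P^{x_\ast,i_\ast,l_\ast}}[u^\varepsilon(x(T))]$, whereupon the bound \eqref{eq : conditions0 fonction test} combined with Step~1 delivers the announced estimate $\le C\varepsilon$. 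The ``in particular'' statement then follows by letting $\varepsilon\searrow 0$ and invoking monotone convergence, since $\int_0^T\mathbf{1}_{\{x(s)=0\}}ds=\lim_{\varepsilon\searrow0}\int_0^T\mathbf{1}_{\{x(s)<\varepsilon\}}ds$.

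I do not expect a serious obstacle, since the argument is a faithful transcription of Lemma~\ref{lem:non-stickness}. The only points requiring genuine care are: (a) justifying the application of the martingale property (iii) to the unbounded $\mathcal{C}^2$ functions $\phi$ and $u^\varepsilon$ via a localization/stopping argument; and (b) checking that the vanishing of the boundary derivatives ($\phi'(0)=0$ and $\partial_x u^\varepsilon(0)=0$) indeed kills the local-time terms, so that the whole estimate collapses to a purely one-dimensional computation that is insensitive both to the spider geometry and to the $l$-dependence of the coefficients.
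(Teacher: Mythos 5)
Your proposal is correct and follows essentially the same route as the paper, whose own proof of Proposition~\ref{prop:non-stickness} consists precisely of the remark that one may repeat the two-step argument of Lemma~\ref{lem:non-stickness} using the representation of Proposition~\ref{pr: existence Brow} (or the martingale property (iii)). Your added observations — that the uniform bounds in $(\mathcal{H})$ make the $(s,l)$-dependence of the coefficients harmless, and that $\phi'(0)=0$ and $\partial_x u^\varepsilon(0)=0$ annihilate the local-time boundary terms — are exactly the points that make the transcription work.
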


\subsection{Extension of the martingale property}

The aim of this paragraph is to extend the martingale property under $\P^{x_\ast,i_\ast,l_\ast}$ to test functions in $\mathcal{C}_b^{1,2,1}\pare{[0,T]\times \mathcal{J}\times \R^+;\R}$. We have the following result.
\begin{Proposition}
\label{pr: martingale-property-extension}
For any $f\in\mathcal{C}^{1,2,1}_b([0,T]\times \mathcal{J}\times \R^+;\R)$, the following process:
\begin{align}
\label{eq:def-V}
&\Bigg(f_{i(s)}(s,x(s),l(s))- f_{i_\ast}(0,x_\ast,l_\ast)\displaystyle\nonumber\\
&\displaystyle -\int_{0}^{s}\pare{\partial_tf_{i(u)}(u,x(u),l(u))+\displaystyle\frac{1}{2}\sigma_{i(u)}^2(u,x(u),l(u))\partial_{xx}^2f_{i(u)}(u,x(u),l(u))}du\nonumber\\
&\displaystyle -\int_{0}^{s}\pare{b_{i(u)}(u,x(u),l(u))\partial_xf_{i(u)}(u,x(u),l(u))}du\nonumber\\
&\displaystyle -\int_{0}^{s}\partial_{l}f(u,0,l(u))dl(u)- \displaystyle\sum_{j=1}^I\int_{0}^{s}\alpha_j(u,l(u))\partial_{x}f_{j}(u,0,l(u))dl(u)~\Bigg)_{0\leq s\leq T}\nonumber,
\end{align} 
is a $(\Psi_s)_{0 \leq s \leq T}$ martingale under $\P^{x_\ast,i_\ast,l_\ast}$.
\end{Proposition}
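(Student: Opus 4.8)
The plan is to reduce the statement to the special case of tensor-product test functions of the form $f_i(t,x,l)=g_i(t,x)\phi(l)$ with $g\in\mathcal{C}^{1,2}_b(\mathcal{J}_T)$ and $\phi\in\mathcal{C}^1_b(\R^+)$, and then recover the general case by linearity and an approximation in the local-time variable. The point is that for such products the desired martingale property can be obtained purely from the martingale property already granted by point (iii) of $\big(\mathcal{S}_{pi}-\mathcal{M}_{ar}\big)$ for $l$-independent test functions, combined with the fact (point (ii)) that $(l(s))_{s\in[0,T]}$ is continuous, of finite variation, and increases only on the time set $\{x=0\}$. No genuine It\^o formula on the network is needed, which is what makes the argument clean.

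For the product case, write $\mathcal{A}g:=\partial_tg+\tfrac12\sigma_{i}^2\partial_{xx}^2g+b_i\partial_xg$ with coefficients evaluated at $(u,x(u),l(u))$, and note that $f:=g\phi$ lies in $\mathcal{C}^{1,2,1}_b$ and satisfies the junction continuity because $g$ does. Applying point (iii) to $g$ shows that $Y_s:=g_{i(s)}(s,x(s))$ is the semimartingale $Y_s=g_{i_\ast}(0,x_\ast)+\int_0^s\mathcal{A}g\,du+\sum_j\int_0^s\alpha_j(u,l(u))\partial_xg_j(u,0)\,dl(u)+V^g_s$, where $V^g$ is a continuous martingale. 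Since $Z_s:=\phi(l(s))$ is continuous and of finite variation, the cross bracket $\langle Y,Z\rangle$ vanishes and integration by parts yields $d(YZ)=Z\,dY+Y\,dZ$. The contribution $Z\,dY$ produces exactly $\mathcal{A}f\,du$, the martingale increment $\phi(l)\,dV^g$, and $\phi(l)\sum_j\alpha_j\partial_xg_j(u,0)\,dl=\sum_j\alpha_j(u,l)\partial_xf_j(u,0,l)\,dl$; the contribution $Y\,dZ=g_{i(u)}(u,x(u))\phi'(l(u))\,dl(u)$ equals $\partial_lf(u,0,l)\,dl$, because by point (ii) the measure $dl$ charges only $\{x(u)=0\}$, where $g_{i(u)}(u,x(u))=g(u,0)$. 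Collecting all terms, the process in the statement equals $\int_0^s\phi(l(u))\,dV^g_u$, which is a martingale since $\phi$ is bounded and $V^g$ is a continuous (indeed $L^2$) martingale. By linearity the result follows for any finite sum $\sum_k g^{(k)}\phi^{(k)}$.

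For general $f\in\mathcal{C}^{1,2,1}_b$ I would approximate only in the $l$-variable: with a smooth mollifier $\rho_\delta$ on $\R$, set $f^\delta_i(t,x,l)=\int_\R f_i(t,x,z)\rho_\delta(l-z)\,dz$ and replace the integral by Riemann sums $\sum_k f_i(t,x,z_k)\rho_\delta(l-z_k)\Delta z$. Each summand is a tensor product $g^{(k)}_i(t,x)\phi^{(k)}(l)$ with $g^{(k)}_i(t,x)=f_i(t,x,z_k)$, and crucially each $g^{(k)}$ lies in $\mathcal{C}^{1,2}_b(\mathcal{J}_T)$ because $f$ satisfies the junction continuity for every fixed value of $l$; hence the junction compatibility is automatically preserved and the product case applies to these finite sums. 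Since $f$ together with $\partial_tf,\partial_xf,\partial_{xx}^2f,\partial_lf$ are bounded and continuous, letting the mesh shrink and then $\delta\to 0$ gives convergence of the approximants and of all these derivatives, uniformly on compact sets (uniform convergence of the $\partial_l$-term following from the integration-by-parts identity $\partial_lf^\delta=\int\partial_zf\,\rho_\delta(l-\cdot)$).

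The remaining task is to pass to the limit in the martingale property. Using the localizing stopping times $\tau_R=\inf\{s:\ x(s)+l(s)>R\}\wedge T$ and the second-moment bounds on $|x|_{(0,T)}$ and on $l(T)$ from Lemma \ref{lemma:tightness-concatenation}, together with the non-stickiness of Proposition \ref{prop:non-stickness}, dominated convergence handles the terminal values and the $du$-integrals routinely. I expect the one delicate point, and the main obstacle, to be the convergence of the local-time integrals $\int_0^s\partial_lf(u,0,l)\,dl(u)$ and $\sum_j\int_0^s\alpha_j(u,l)\partial_xf_j(u,0,l)\,dl(u)$: these are supported on $\{x=0\}$ and involve the junction values of $\partial_lf$ and $\partial_xf$, so one must control the approximation precisely at the junction. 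This is exactly where the uniform-on-compacts convergence of the approximants at $x=0$ and the finiteness of $\E^{\P^{x_\ast,i_\ast,l_\ast}}\croc{l(T)}$ combine to give dominated convergence for the Stieltjes integrals. One thereby obtains $L^1$-convergence of the approximating processes to the process in the statement for each fixed $s$, and since each approximant is a martingale, the limit is a martingale as well, which completes the proof.
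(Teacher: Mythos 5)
Your proposal is correct and follows essentially the same route as the paper: first the tensor-product case $f=g\,\phi(l)$ via stochastic integration by parts against the continuous finite-variation process $\phi(l(\cdot))$, using point (ii) to identify $g_{i(u)}(u,x(u))\phi'(l(u))\,dl(u)$ with $\partial_l f(u,0,l(u))\,dl(u)$ on the support of $dl$, and then an approximation of a general $f\in\mathcal{C}^{1,2,1}_b$ by finite sums of such products, concluded by localization and dominated convergence. The only difference is cosmetic: the paper approximates by Bernstein polynomials in all variables (its class $\mathcal{G}$ of polynomial products), whereas you mollify and discretize in the $l$-variable only, which if anything simplifies the control of the $(t,x)$-derivatives in the limit.
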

\begin{proof}

{\bf Step 1}

Let $f\in\mathcal{C}^{1,2,1}_b([0,T]\times \mathcal{J}\times \R^+;\R)$ with product form $f(t,x,l) = h(t,x)g(l)$ where $h\in \mathcal{C}_b^{1,2}(\mathcal{J}_T)$ and $g\in \mathcal{C}_b^{1}(\R^+;\R)$. 

From the result of Theorem \ref{th: exist Spider } (Existence part) and using the notation \eqref{eq:def-V}, we have that $(V^h(s))_{s\in [0,T]}$ is a martingale under $\P^{x_\ast,i_\ast,l_\ast}$. Applying the stochastic integration by parts formula to the $\R$ valued processes $(V^h(s))_{s\in [0,T]}$ and $g(l(s))_{s\in [0,T]}$ and because $(l(s))_{s\in [0,T]}$ is by construction of finite variation , we see that for all $s\in [0,T]$
\[
V^h(s)g(l(s)) - V^h(0)g(l_\ast) - \int_0^s V^h(s)g'(l(s))dl(s) = \int_0^s g(l(s))d(V^h(s))
\]
which defines a $(\Psi_s)_{0 \leq s \leq T}$ martingale under $\P^{x_\ast,i_\ast,l_\ast}$.

Using then Proposition \ref{prop:non-stickness} and identifying the martingale part in this equality, we retrieve the result of Proposition \ref{pr: martingale-property-extension} in this case.

Using the same identification, it is easy to see that the result of Proposition \ref{pr: martingale-property-extension} holds also true for functions $f\in\mathcal{C}^{1,2,1}_b([0,T]\times \mathcal{J}\times \R^+;\R)$ with a sum form $f(t,x,l) = h(t,x) + g(l)$ with $h\in \mathcal{C}_b^{1,2}(\mathcal{J}_T)$ and $g\in \mathcal{C}_b^{1}(\R^+;\R)$..

Let us introduce $\R[X]$ (resp. $\R[X,Y]$) the set of one-indeterminate polynomials on $\R$ (resp. two indeterminate polynomials $\R[X,Y]$ on $\R$) and
\begin{align*}
\mathcal{G} := \left \{
f\in\mathcal{C}^{1,2,1}([0,T]\times \mathcal{J}\times \R^+;\R)~\Big |~\exists ((P_i)_{i\in [I]},Q)\in \pare{(\R[X,Y])^I\times \R[X]},\right .\\
\hspace{0,4 cm}\left .f_i(t,x,l)=P_i(t,x)Q(l)\;\;\;\forall i\in [I]
\right \}.
\end{align*}

As a consequence of the above arguments, we claim that the result of Proposition \ref{pr: martingale-property-extension} holds true for all functions $f\in \mathcal{G}$.

{\bf Step 2}

Let $f\in\mathcal{C}^{0,0,0}_b([0,T]\times \mathcal{J}\times \R^+;\R)$. Fix $N\in \N^\ast$, we claim that there is $(f^{(n)})_{n\in \N^\ast}$ a sequence on $\mathcal{G}$ -- in particular $f^{(n)}\in \mathcal{C}^{1,2,1}(\mathcal{J}_T\times \R^+)$ -- such that 
\[
\sup_{(t,(x,i),l)\in [0,T]\times \mathcal{J}\times \R^+}|f_i(t,x\wedge N,l\wedge N) - f^{(n)}_i(t,x\wedge N,l\wedge N)|
\leq C/n\]
for a constant $C$ that depends only on $N$ on $f$ and the data: it suffices to take the Bernstein polynomial
\begin{align*}
&f_i^{(n)}(t,x,l)\,=\,\\
&\!\!\!\!\sum_{j,k,m=1}^n\!\!\binom{n}{j}\binom{n}{k}\binom{n}{m}f_i(jT/n, kN/n, mN/n)(t - jT/n)^{n-j}t^{j}(x - kN/n)^{n-k}x^{k}(l - mN/n)^{n-m}l^m.
\end{align*}

Hence, on the set $[0,T]\times ([I]\times [0,N])\times [0,N]$, any function $f\in\mathcal{C}^{1,2,1}_b([0,T]\times \mathcal{J}\times \R^+;\R)$ is the limit in $\mathcal{C}^{1,2,1}_b([0,T]\times ([I]\times [0,N])\times [0,N];\R)$ of a sequence of polynomial functions in $\mathcal{G}$. From Step 1, by stopping and using the ordinary and stochastic dominated convergence theorems (see for e.g. \cite{Revuz-Yor} Chapter IV Theorem (2.12)), the result of the proposition is established.
\end{proof}

\section{Uniqueness in law}
\label{sec:uniqueness}
The main purpose of this section is to prove the uniqueness of the weak solution
$$\Biggl(\Phi,\mathbb{B}(\Phi),(\Psi_t)_{0 \leq t \leq T}, \Big( x(\cdot),i(\cdot),l(\cdot)\Big),\P^{x_\ast,i_\ast,l_\ast}\Biggl)$$
of our central Theorem \ref{th: exist Spider }, whose existence has been proven in the last Section \ref{sec: preuve et paths properties}. For this purpose, we will use a PDE argument and more precisely the last recent results obtained in \cite{Martinez-Ohavi EDP}. We start first by recalling the central Theorem obtained in \cite{Martinez-Ohavi EDP} for our purpose, which ensures the well-posedness (existence and uniqueness) for a system of linear parabolic partial differential equations posed on the star-shaped network $\mathcal{J}$ (junction), satisfying a {\it local time Kirchhoff's transmission condition} at the junction point $\bf 0$ (see Theorem 2.8 in \cite{Martinez-Ohavi EDP}). 

\subsection{The backward parabolic differential equation associated to the martingale problem $\big(\mathcal{S}_{pi}-\mathcal{M}_{ar}\big)$}
\noindent

In the whole section we are given a fixed terminal condition $T>0$. 

We introduce the following two domains:
$$\mathcal{D}_{\infty}=\mathcal{J}\times [0,+\infty),~~\mathcal{D}_{T,\infty}=[0,T]\times\mathcal{D}_{\infty}.$$
Let us introduce the class of regularity corresponding to the unique solution of the PDE system.
\begin{Definition} of the class $\mathfrak{C}^{1,2,0}_{\{0\},b} \big(\mathcal{D}_{T,\infty}\big)$ (see \cite{Martinez-Ohavi EDP} Definition 2.7)\label{def: class regula EDP}.

We say that
$$f:=\begin{cases} \mathcal{D}_{T,\infty}\to \R,\\
\big(t,(x,i),l\big)\mapsto f_i(t,x,l)
\end{cases}.$$
is in the class $f\in \mathfrak{C}^{1,2,0}_{\{0\},b} \big(\mathcal{D}_{T,\infty}\big)$ if\\
(i) the following continuity condition holds at the junction point $\{0\}$: \\for all $(t,l)\in[0,T]\times[0,+\infty)$, for all $(i,j)\in [I]^2$, $f_i(t,0,l)=f_j(t,0,l)=f(t,0,l)$;\\
(ii) for all $i\in [I]$, the map $(t,x,l)\mapsto f_i(t,x,l)$ has a regularity in the class\\ $\mathcal{C}^{0,1,0}_b\big([0,T]\times [0,+\infty)^2,\R \big)$ ;\\
(iii) for all $i\in [I]$, the map $(t,x,l)\mapsto f_i(t,x,l)$ has regularity in the interior of each ray $\mathcal{R}_i$ in the class $\mathcal{C}^{1,2,0}_b\big((0,T)\times (0,+\infty)^2,\R \big)$;\\
(iv) at the junction point $\bf 0$, the map $(t,l)\mapsto f(t,0,l)$ has a regularity in the class\\ $\mathcal{C}^{0,1}_b\big([0,T]\times [0,+\infty),\R \big)$ ;\\
(v) for all $i\in [I]$, on each ray $\mathcal{R}_i$, $f$ admits a generalized locally integrable derivative with respect to the variable $l$ in $\displaystyle \bigcap\limits_{q\in (1,+\infty)}\!L^q_{loc}\big((0,T)\times (0,+\infty)^2\big)$.
\end{Definition}
As a consequence of Theorem 2.8 in \cite{Martinez-Ohavi EDP} we have:
\begin{Theorem}\label{th : exis para with l bord infini}
Assume assumption $(\mathcal{H})$. Let $g\in \mathcal{C}^{\infty}_b\big(\mathcal{D}_{\infty}\big)$ satisfying the following compatibility condition:
\begin{align*}
&\partial_lg(0,l)+\sum_{i=1}^I\alpha_i(\mathcal{T},l)\partial_xg_i(0,l)=0,~~l\in[0,+\infty).
\end{align*}
where $\mathcal{T}\in (0,T]$ is a terminal condition.
Then, the following system of backward linear parabolic partial differential equations involving a {\it local time Kirchhoff's transmission condition} posed on the domain $\mathcal{D}_{\mathcal{T},\infty}$:
\begin{eqnarray}\label{eq : pde with l bord infini Walsh}
\begin{cases}&\partial_tu_i(t,x,l)+\ds \frac{1}{2}\sigma^2_i(t,x,l)\partial_x^2u_i(t,x,l)\\
&\hspace{2,0 cm}+b_i(t,x,l)\partial_xu_i(t,x,l)=0,\,(t,x,l)\in (0,\mathcal{T})\times (0,+\infty)^2,\\
&\partial_lu(t,0,l)+\displaystyle \sum_{i=1}^I \alpha_i(t,l)\partial_xu_i(t,0,l)=0,~~(t,l)\in(0,\mathcal{T})\times(0,+\infty)\\
&\forall (i,j)\in[I]^2,~~u_i(t,0,l)=u_j(t,0,l)=u(t,0,l),~~(t,l)\in[0,\mathcal{T}]\times[0,+\infty)^2,\\
&\forall i\in[I],~~ u_i(\mathcal{T},x,l)=g_i(x,l),~~(x,l)\in[0,+\infty)^2,
\end{cases}
\end{eqnarray}
is uniquely solvable in the class $\mathfrak{C}^{1,2,0}_{\{0\},b} \big(\mathcal{D}_{\mathcal{T},\infty}\big)$. 
\end{Theorem}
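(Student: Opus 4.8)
The plan is to obtain Theorem \ref{th : exis para with l bord infini} as a direct application of Theorem 2.8 in \cite{Martinez-Ohavi EDP}, after a preliminary time reversal that recasts the backward Cauchy problem \eqref{eq : pde with l bord infini Walsh} into the forward parabolic problem treated there. First I would set $s := \mathcal{T} - t$ and $v_i(s,x,l) := u_i(\mathcal{T}-s,x,l)$, together with the reversed coefficients $\tilde{\sigma}_i(s,x,l) := \sigma_i(\mathcal{T}-s,x,l)$, $\tilde{b}_i(s,x,l) := b_i(\mathcal{T}-s,x,l)$ and $\tilde{\alpha}_i(s,l) := \alpha_i(\mathcal{T}-s,l)$. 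A straightforward computation shows that $u$ solves \eqref{eq : pde with l bord infini Walsh} in the class $\mathfrak{C}^{1,2,0}_{\{0\},b}\pare{\mathcal{D}_{\mathcal{T},\infty}}$ if and only if $v$ solves the corresponding forward system on $(0,\mathcal{T})\times(0,+\infty)^2$ with the same \emph{local time Kirchhoff's transmission condition} (now written with $\tilde{\alpha}_i$), the continuity condition at the junction, and the initial condition $v_i(0,x,l)=g_i(x,l)$; the class $\mathfrak{C}^{1,2,0}_{\{0\},b}$ is manifestly invariant under the smooth bijection $t\mapsto\mathcal{T}-t$ of the time interval.

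Next I would check that the reversed data meet the hypotheses required by Theorem 2.8 of \cite{Martinez-Ohavi EDP}; this is immediate from assumption $(\mathcal{H})$. Time reversal preserves supremum norms and the Lipschitz moduli in $t$, $x$ and $l$, so $(\tilde{\sigma}_i,\tilde{b}_i)$ still satisfy the boundedness and regularity bounds recorded in $({\bf R}-i)$ and $({\bf R}-ii)$, while $({\bf E})$ gives the uniform ellipticity $\tilde{\sigma}_i \ge \underline{\sigma}$ and $({\bf A})$ gives the uniform lower bound $\tilde{\alpha}_i \ge \underline{a}$. Moreover, the compatibility condition imposed on $g$ in the statement, namely $\partial_l g(0,l)+\sum_{i=1}^I\alpha_i(\mathcal{T},l)\partial_x g_i(0,l)=0$, is exactly the Kirchhoff transmission condition of the forward problem evaluated at the initial time $s=0$ (since $\tilde{\alpha}_i(0,l)=\alpha_i(\mathcal{T},l)$), which is the compatibility hypothesis demanded in \cite{Martinez-Ohavi EDP}; the assumed smoothness $g\in\mathcal{C}^\infty_b(\mathcal{D}_\infty)$ amply supplies the regularity required of the initial datum there.

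Having matched the two frameworks, I would then invoke Theorem 2.8 of \cite{Martinez-Ohavi EDP} to conclude that the forward system admits a unique solution $v$ in the class $\mathfrak{C}^{1,2,0}_{\{0\},b}\pare{\mathcal{D}_{\mathcal{T},\infty}}$. Undoing the time change, $u_i(t,x,l):=v_i(\mathcal{T}-t,x,l)$ is then a solution of \eqref{eq : pde with l bord infini Walsh} in the same class, and uniqueness transfers through the bijection $t\mapsto\mathcal{T}-t$ exactly as existence does, which is the asserted well-posedness.

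I expect the only genuinely delicate point to be the bookkeeping required to certify that the present formulation is a special case of the one in \cite{Martinez-Ohavi EDP}: one must make sure that the unbounded local-time variable $l\in[0,+\infty)$ is covered by that reference (rather than only a truncated domain $l\in[0,N]$, in which case a further localization or stability-in-$N$ argument would be needed), and that the transmission condition and the regularity class $\mathfrak{C}^{1,2,0}_{\{0\},b}$ are stated there in precisely the same form. Once this identification is made explicit, no additional analysis is needed here, since all the hard PDE work — the construction of the solution and its uniqueness under the local time Kirchhoff's condition — is carried out in \cite{Martinez-Ohavi EDP}.
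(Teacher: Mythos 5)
The paper gives no proof of this theorem at all: it simply states the result "as a consequence of Theorem 2.8 in \cite{Martinez-Ohavi EDP}", and your proposal amounts to the same thing — invoking that reference after checking that assumption $(\mathcal{H})$ and the compatibility condition match its hypotheses. Your extra care about a possible time reversal and about the unbounded $l$-domain is reasonable bookkeeping but does not change the substance; this is essentially the paper's (implicit) argument.
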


Observe that the class of test function $\mathcal{C}_b^{1,2,1}([0,T]\times \mathcal{J}\times \R^+;\R)$ of Proposition \ref{pr: martingale-property-extension} is much smaller than the class $\mathfrak{C}^{1,2,0}_{\{0\},b} \big(\mathcal{D}_{T,\infty}\big)$. The issue of the next proposition is to show that the  martingale property of Theorem \ref{th: exist Spider } (iii) extends further to any test function in the class $\mathfrak{C}^{1,2,0}_{\{0\},b} \big(\mathcal{D}_{T,\infty}\big)$.
\begin{Proposition}\label{pr : weak Ito formula}
Assume assumption $(\mathcal{H})$, and let $$\Biggl(\Phi,\mathbb{B}(\Phi),(\Psi_t)_{0 \leq t \leq T}, \Big(x(\cdot),i(\cdot),l(\cdot)\Big),\P^{x_\ast,i_\ast,l_\ast}\Biggl)$$
be a weak solution solution of the spider martingale problem $\big(\mathcal{S}_{pi}-\mathcal{M}_{ar}\big)$ of Theorem \ref{th: exist Spider }. Then for all $f\in \mathfrak{C}^{1,2,0}_{\{0\},\infty} \big(\mathcal{D}_{T,\infty}\big)$:
\begin{align*}
&\Bigg(~~f_{i(t)}(t,x(t),l(t)) - f_{i_\ast}(0,x_\ast,l_\ast)\\
&-\displaystyle\int_{0}^{t}\Big(\partial_tf_{i(u)}(u,x(u),l(u))+\displaystyle\frac{1}{2}\sigma_{i(u)}^2(u,x(u),l(u))\partial_{x}^2f_{i(u)}(u,x(u),l(u))\\
&+b_{i(u)}(u,x(u),l(u))\partial_xf_{i(u)}(u,x(u),l(u))\Big)du\\
&-\ds\int_{0}^{t}\Big(\partial_lf(u,0,l(u))+\displaystyle\sum_{j=0}^{I}\alpha_j(u,l(u))\partial_{x}f_{i}(u,0,l(u))\Big)dl(u)~~\Bigg)_{t\in[0,T]},
\end{align*} 
is a $(\Psi_t)_{0 \leq t \leq T}$ martingale under the probability measure $\P^{x_\ast,i_\ast,l_\ast}$.
\end{Proposition}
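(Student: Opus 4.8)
The plan is to obtain the martingale property for a general $f\in\mathfrak{C}^{1,2,0}_{\{0\},b}(\mathcal{D}_{T,\infty})$ by approximating it with a sequence $(f^{(n)})$ in the smaller class $\mathcal{C}^{1,2,1}_b$ of Proposition \ref{pr: martingale-property-extension}, for which the martingale property is already known, and then passing to the limit. Two structural facts drive the argument. First, by the non-stickiness estimate of Proposition \ref{prop:non-stickness} the process spends Lebesgue-almost no time at the junction, so the $du$-integrals involving $\partial_t f$, $\partial_{xx}f$ and $\partial_x f$ only feel the values of these derivatives on $\{x>0\}$; in particular the mere \emph{interior} regularity granted by item (iii) of Definition \ref{def: class regula EDP} suffices to make these integrals meaningful and stable under approximation. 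Second, by condition (ii) of $\big(\mathcal{S}_{pi}-\mathcal{M}_{ar}\big)$ the Stieltjes measure $dl$ is carried by $\{u:\;x(u)=0\}$, so the $dl$-integral only involves the junction traces $\partial_l f(u,0,l(u))$ and $\partial_x f_j(u,0,l(u))$, for which the classical regularity of items (ii) and (iv) of Definition \ref{def: class regula EDP} is exactly what is needed. Consequently the generalized $l$-derivative of item (v) never enters the martingale expression itself.

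For the construction of the approximants I would first reduce, by a localization argument in $x$ and $l$ (the contributions beyond a level $N$ being controlled by finite-moment estimates), to a bounded region $\{x\le N,\;l\le N\}$. On this region I would mollify $f$ so as to gain the missing smoothness ($C^1$ in $t$, $C^2$ in $x$ up to $x=0$, $C^1$ in $l$) while preserving the junction identity $f^{(n)}_i(t,0,l)=f^{(n)}_j(t,0,l)$; the natural way to respect this constraint is to regularize separately the common junction trace $f(\cdot,0,\cdot)$ (in $(t,l)$) and the ray-dependent remainders $f_i-f(\cdot,0,\cdot)$, which vanish at $x=0$. One arranges these mollifiers so that, as $n\to\infty$: $f^{(n)}\to f$ uniformly; $\partial_x f^{(n)}\to\partial_x f$ uniformly up to $x=0$ (item (ii)); $\partial_t f^{(n)}\to\partial_t f$ and $\partial_{xx}f^{(n)}\to\partial_{xx}f$ uniformly on compact subsets of the interior, with a uniform sup-bound (item (iii) and the boundedness built into the class); and $\partial_l f^{(n)}(\cdot,0,\cdot)\to\partial_l f(\cdot,0,\cdot)$ uniformly (item (iv)).

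It then remains to pass to the limit in the martingale identity written for each $f^{(n)}$, showing $L^1(\P^{x_\ast,i_\ast,l_\ast})$-convergence of every term. The two boundary terms converge by the uniform convergence of $f^{(n)}$. For the $dl$-integral I would use that $dl$ is supported on $\{x=0\}$ together with the junction convergences of $\partial_l f^{(n)}(\cdot,0,\cdot)$ and $\partial_x f^{(n)}_j(\cdot,0,\cdot)$, the total mass $l(T)-l_\ast$ having finite moments. For the $du$-integrals, the first-order term is immediate from the uniform convergence of $\partial_x f^{(n)}$ and the boundedness of $b$; for the delicate second-order term I would split $\int_0^t(\cdot)\,du$ over $\{x(u)\ge\delta\}$ and $\{x(u)<\delta\}$. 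On $\{x(u)\ge\delta\}$ (and $l$ bounded) the integrand converges uniformly, since there we sit on a compact subset of the interior; on $\{x(u)<\delta\}$ the contribution is bounded, uniformly in $n$, by $\tfrac12\sup_n\|\sigma_i^2\,\partial_{xx}f^{(n)}\|_\infty\,\E[\int_0^T\mathbf{1}_{\{x(u)<\delta\}}\,du]\le C\delta$ thanks to Proposition \ref{prop:non-stickness}. Letting first $n\to\infty$ and then $\delta\to0$ yields the convergence, and assembling all pieces identifies the limit as a $(\Psi_t)$-martingale.

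The main obstacle is twofold, and both aspects concern the second-order behaviour near $x=0$. On the analytic side one must produce the approximants genuinely inside $\mathcal{C}^{1,2,1}_b$ — hence $C^2$ in $x$ up to $x=0$ and compatible with the junction identity — out of a function that is only $C^1$ in $x$ at the boundary and only $C^2$ in $x$ in the open rays; this regularity mismatch (together with the boundary behaviour in $l$ when $l_\ast=0$) is what makes the mollification step non-routine. On the probabilistic side, the passage to the limit for the $\partial_{xx}f^{(n)}$-term cannot rest on dominated convergence alone, since control of $\partial_{xx}f$ degenerates at the junction; the ingredient that rescues the argument is precisely the quantitative non-stickiness estimate of Proposition \ref{prop:non-stickness}, which renders $\{x<\delta\}$ negligible uniformly in $n$ and thereby replaces the occupation-density (Krylov-type) estimate one would otherwise need.
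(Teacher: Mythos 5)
Your proposal follows essentially the same route as the paper: a mollification of $f$ that preserves continuity at the vertex by regularizing the junction trace and the ray remainders separately, an application of Proposition \ref{pr: martingale-property-extension} to the smooth approximants, and a passage to the limit combining the uniform convergences of $f^{(n)}$, $\partial_x f^{(n)}$ (up to $x=0$) and $\partial^2_{x} f^{(n)}$ (away from $x=0$) with the non-stickiness estimate of Proposition \ref{prop:non-stickness} to neutralize the second-order term on $\{x<\delta\}$. The only divergence is in the bookkeeping of the localization: where you truncate at a level $N$ in $x$ and $l$, the paper introduces the stopping times $\tau^\theta$, $\tau^\zeta$ and restricts time to $[\delta,T-\delta]$ so that the mollifier never sees the boundary of the $(t,l)$-domain --- which is precisely the device resolving the ``$l_\ast=0$'' boundary issue you flag as an obstacle but leave open.
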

\begin{proof}
Let $f\in \mathfrak{C}^{1,2,0}_{\{0\},\infty} \big(\mathcal{D}_{T,\infty}\big)$. The main idea is to start from the result of Proposition \ref{pr: martingale-property-extension} and to regularize $f$ by convolution on the extent domain $\R \times (\R \times [I])\times \R$. Due to the fact that we have to ensure continuity at the junction vertex $\bf 0$, we need to introduce a special regularization that is not standard. We begin to establish some convergence properties for this special regularization. Afterwards,  starting from the martingale property ($\big(\mathcal{S}_{pi}-\mathcal{M}_{ar}\big)-(iii)$) and using the dominated convergence theorem together with the {\it non-stickiness} condition given in proposition\ref{prop:non-stickness}, we show that these properties are enough to ensure the convergence at the level of martingales. 

{\bf Step 1} (Deterministic regularization procedure)

Let $\varepsilon>0$. Let us introduce $\rho_\varepsilon$ an infinite differentiable kernel converging weakly to the Dirac mass at $0$ on $\R^3$ in the sense of distribution, with $\ds \int_{\R^3}\rho_\varepsilon =1$ and compact support $[-\varepsilon,\varepsilon]^3$. For instance: $\forall \varepsilon>0$, $\forall (t,x,z)\in \R^3$:
\begin{align*}
&\nonumber~~\rho_\varepsilon(t,x,z)=\begin{cases}
\ds \frac{\varepsilon ^3 \mathcal{I}}{\exp\big(\|(\varepsilon t,\varepsilon x,\varepsilon z)\|_{\R^3}^2\big)-1},~~\text{if}~~\|(\varepsilon t,\varepsilon x,\varepsilon z)\|_{\R^3}\leq 1,\\
0,~~\text{else}
\end{cases},\\
& \text{with:}~~\ds \mathcal{I}=\int_{\R^3}\ds \frac{1}{\exp\big(\|(t, x, z)\|_{\R^3,}^2\big)-1}\mathbf{1}_{\{\|(t,x, z)\|_{\R^3}\leq 1\}}dtdxdz.~~~~~
\end{align*}
We set in the sequel, for all $i\in [I]$ and for all $(t,x,l)\in  [0,T]\times [0,+\infty)^2$:
\begin{align}\label{eq regul solu EDP}
\nonumber &f_i^{\varepsilon}(t,x,l):=\ds \int_{\R^3}\overline{f}_i(t-s,0,l-z)\rho_\varepsilon(s,y,z)dsdydz\\
&+\ds \int_{\R^3}\Big(\int_0^x\partial_x\overline{f}_i(t-s,u-y,l-z)du\Big)\rho_\varepsilon(s,y,z)dsdydz,~~
\end{align}
where $\overline{f}_i:\R^3\to \R$ is the 'symmetrization of $f_i$' given by:
\begin{align*}
\forall (s,y,z)\in \R^3,\;\;\;&\overline{f}_i:=f_i(\psi_T(s),|y|,|z|),\\
&\text{and}~~\psi_T(s)=|s|\mathbf{1}_{\{|s|\leq T\}}+T\mathbf{1}_{\{|s|> T\}}. 
\end{align*}
Observe that
\begin{align}
\label{eq:develop-regul}
&\int_{\R^3}\Big(\int_0^x\partial_x\overline{f}_i(t-s,u-y,l-z)du\Big)\rho_\varepsilon(s,y,z)dsdydz\nonumber\\
&= \int_{\R^3} \pare{\overline{f}_i(t-s,x-y,l-z) - \overline{f}_i(t-s,-y,l-z)}\rho_\varepsilon(s,y,z)dsdydz,
\end{align}
the classical arguments ensure $f_i^{\varepsilon}\in \mathcal{C}^{\infty}_b\big([0,T]\times [0,+\infty)^2,\R\big)$, for all $i\in [I]$. 

We fix now $\delta>0$ and $\theta>0$ two other parameters large enough from $\varepsilon$, ($\delta>>\varepsilon,~\theta>>\varepsilon )$, in order to obtain the following inclusion: 
$$\forall (t,l)\in [\delta,T-\delta]\times [\theta,+\infty),~~\big\{(s,z)\in \R^2~;~|t-s|\leq \varepsilon,~|l-z|\leq \varepsilon\big\}\subset [0,T]\times [0,+\infty). $$
This last property - together with the continuity of $f(t,.,l)$ at the junction point $\bf 0$ - allows us to state that 

$\forall (t,l)\in [\delta,T-\delta]\times [\theta,+\infty)$, $\forall (i,j)\in [I]^2$:
\begin{eqnarray*}
&\ds f_i^{\varepsilon}(t,0,l)=\int_{\R^3}f_i(t-s,0,l)\rho_\varepsilon(s,y,z)ds=\int_{\R^3}f_j(t-s,0,l)\rho_\varepsilon(s,y,z)ds=f_j^{\varepsilon}(t,0,l),
\end{eqnarray*}
which implies that $f^{\varepsilon}\in \mathcal{C}^{\infty}_b\big([\delta,T-\delta]\times \mathcal{J}\times [\theta,+\infty),\R\big)$ and $f^\varepsilon(t,.,l)$ is continuous at $\bf 0$ whenever $(t,l)\in [\delta,T-\delta]\times [\theta, \infty)$. In the sequel we will use the following convergence properties as $\varepsilon \xrightarrow{} 0$: 
\begin{align}
\nonumber&\forall\zeta>\theta,~~\forall i\in [I]:\\
&\label{conv unifo en 0}
\lim_{\varepsilon\to 0} \|f^{\varepsilon}(\cdot,0,\cdot)-f(\cdot,0,\cdot)\|_{\mathcal{C}^{0,1}([\delta,T-\delta]\times [\theta,\zeta])}=0,\\
&\label{conv unifo global}
\lim_{\varepsilon\to 0} \|f_i^{\varepsilon}-f_i\|_{\mathcal{C}^{0,1,0}([\delta,T-\delta]\times[0,\zeta]\times[\theta,\zeta])}=0,
\\
&\label{conv unifo laplac}
\lim_{\varepsilon\to 0} \|\partial_x^2f_i^{\varepsilon}-\partial_x^2f_i\|_{\mathcal{C}^{0,0,0}([\delta,T-\delta]\times[\theta,\zeta]\times[\theta,\zeta])}=0,\\
&\label{conv unifo deriv temps}
\forall (t,x,l)\in [\delta,T-\delta]\times[\theta,\zeta]\times[\theta,\zeta],~~\lim_{\varepsilon\to 0} \partial_tf_i^{\varepsilon}(t,x,l)-\partial_tf_i(t,x,l)=0
\end{align}
that we are going to prove in the upcoming lines. Recall that $f$ belongs to the class $ \mathfrak{C}^{1,2,0}_{\{0\},b} \big(\mathcal{D}_{T,\infty}\big)$ given in Definition \ref{def: class regula EDP}. 

Fix $\zeta>\theta$. Our choices of the parameters $\delta$ and $\theta$ imply that 

$\forall (t,x,l)\in [\delta,T-\delta]\times [\theta,\zeta]^2:$
$$~~\Big\{~(s,y,z)\in \R^3;~~|t-s|\leq \varepsilon,~~|x-y|\leq \varepsilon,~~|l-z|\leq \varepsilon~\Big\}\subset [0,T]\times [0,+\infty)^2.$$
Thus, we ensure that $\overline{f}=f$ in all the integral terms appearing in the definition of the $f_i^\varepsilon$ in \eqref{eq regul solu EDP}. Since $f(\cdot,0,\cdot)\in \mathcal{C}^{0,1}([\delta,T-\delta]\times [\theta,\zeta])$ and $\partial_x^2f_i\in \mathcal{C}^{0,0,0}([\delta,T-\delta]\times[\theta,\zeta]\times[\theta,\zeta])$, we obtain using classical arguments that \eqref{conv unifo en 0} and \eqref{conv unifo laplac} hold true.  

The convergence \eqref{conv unifo global} is more involved: note that it involves a $\mathcal{C}^1$ convergence in the second coordinate (spatial) and up to $0$. 
Because \eqref{conv unifo en 0} holds true, to obtain \eqref{conv unifo global}, it is enough to show that:
\begin{eqnarray*}
\forall i\in [I],~~\lim_{\varepsilon\to 0} \|\partial_xf_i^{\varepsilon}-\partial_xf_i\|_{C^{0,0,0}([\delta,T-\delta]\times[0,\zeta]\times[\theta,\zeta])}=0.
\end{eqnarray*}
For this purpose, we use the definition of $\overline{f}$ and once again our choices of $\delta$ and $\theta$ to obtain on the domain $\mathcal{O}:=[\delta,T-\delta]\times[0,\zeta]\times[\theta,\zeta]$:
\begin{align*}
&\|\partial_xf_i^{\varepsilon}(t,x,l)-\partial_xf_i(t,x,l)\|_{\mathcal{C}^{0}(\mathcal{O})}\\
&=\|\ds \int_{\R^3}\Big(\partial_x\overline{f}_i(t-s,x-y,l-z)-\partial_xf_i(t,x,l)\Big)\rho_\varepsilon(s,y,z)dsdydz\|_{\mathcal{C}^{0}(\mathcal{O})}\\
&\leq \|\ds \int_{\big\{(x,y,z)\in \R^3;\;\;0\leq x-y\leq \varepsilon \big\}}\Big(\partial_xf_i(t-s,x-y,l-z)-\partial_xf_i(t,x,l)\Big)\rho_\varepsilon(s,y,z)dsdydz\|_{\mathcal{C}^{0}(\mathcal{O})}\\
&\;\;\;+\|\ds \int_{\big\{(x,y,z)\in \R^3;\;\;-\varepsilon\leq x-y\leq 0 \big\}}\Big(\partial_xf_i(t-s,y-x,l-z)-\partial_xf_i(t,x,l)\Big)\rho_\varepsilon(s,y,z)dsdydz\|_{\mathcal{C}^{0}(\mathcal{O}))}\\
&\leq 2\|\sup|\partial_xf_i(t-s,x-y,l-z)-\partial_xf_i(t,x,l)|\|_{\mathcal{C}^{0}(\mathcal{O})}
\end{align*}
where the supreme is taken over the set $\{(s,y,z)\in \R^3;~|t-s|\leq \varepsilon,|x-y|\leq \varepsilon,|l-z|\leq \varepsilon\}$.

Because $\partial_xf_i\in C^{0,0,0}([\delta,T-\delta]\times[0,\zeta]\times[\theta,\zeta])$ $(\textrm{for any } i\in [I])$, by uniform continuity, we see that the last term above tends to $0$ as $\varepsilon \xrightarrow{}0$, which yields \eqref{conv unifo global}. 

Finally, for the convergence of the time derivative \eqref{conv unifo deriv temps}, fix $i\in [I]$ and $(t,x,l)\in [\delta,T-\delta]\times[\theta,\zeta]\times[\delta,\zeta]$. Since $f\in W^{1,\infty}\big([\delta,T-\delta]\times[0,\zeta]\times[\theta,\zeta]\big)$ we have from the definition \eqref{eq regul solu EDP} and \eqref{eq:develop-regul}: 
\begin{align*}&\partial_tf_i^{\varepsilon}(t,x,l)=\ds\int_{\R^3}\Big(\partial_tf(t-s,0,l-z)-\partial_tf_i(t,|y|,l-z)\Big)\rho_\varepsilon(s,y,z)dsdydz\\
&\ds + \int_{\R^3}\partial_tf_i(t-s,x-y,l-z)\rho_\varepsilon(s,y,z)dsdydz.
\end{align*}
When $\varepsilon \to 0$, using the weak convergence of $\rho_\epsilon$ to the Dirac measure $\delta_{{}_{0_{\R^3}}}$, we see that the first integral tends to $0$, whereas the second term converges to $\partial_tf_i(t,x,l)$ (because $\partial_t f_i$ is continuous on the domain $[\delta,T-\delta]\times[\theta,\zeta]\times[\theta,\zeta]$). In conclusion \eqref{conv unifo deriv temps} holds true. 

We conclude that all the properties \eqref{conv unifo en 0}, \eqref{conv unifo global}, \eqref{conv unifo laplac}, \eqref{conv unifo deriv temps} hold true.

\medskip

{\bf Step 2}

Choose moreover $\zeta>(x_\ast\vee l_\ast)\wedge \theta$. Keep in mind that $\theta$ and $\delta$ are meant to be small parameters that will be sent to zero, whereas $\zeta>(x_\ast\vee l_\ast)\wedge \theta$ is designed to tend to infinity.

Let us associate to $\theta$ and $\zeta$ the following stopping times $\tau^\theta$  and $\tau^\zeta$ defined by:
\begin{equation}
\label{def:temps-arret}
\tau^\theta:=\inf\big\{t\ge \delta,~~l(t)\ge \theta\big\},~~\tau^\zeta:=\inf\big\{t\ge \delta,~~x(t)\wedge l(t)\ge \zeta\big\}
\end{equation}
with the convention that $\inf(\emptyset) := \delta$.

Recall that $f^{\varepsilon}\in \mathcal{C}^{\infty}_b\big([\delta,T-\delta]\times \mathcal{J}\times [\theta,+\infty),\R\big)$. Hence, from the result of Proposition \ref{pr: martingale-property-extension}, the following process: 
\begin{align}\label{eq : martin tronqu} 
\nonumber &\Bigg(\mathcal{M}_t^{\delta,\theta,\zeta}\big(f^{\varepsilon}\big):=f^{\varepsilon}_{i(t\wedge \tau^\zeta)}(t\wedge \tau^\zeta,x(t\wedge \tau^\zeta),l(t\wedge \tau^\zeta)) - f^{\varepsilon}_{i(t\wedge \tau^\theta)}(t\wedge \tau^\theta,x(t\wedge \tau^\theta),l(t\wedge \tau^\theta))-\\
\nonumber &\displaystyle\int_{t\wedge \tau^\theta}^{t\wedge \tau^\zeta}\Big(\partial_tf^{\varepsilon}_{i(u)}(u,x(u),l(u))+\displaystyle\frac{1}{2}\sigma_{i(u)}^2(u,x(u),l(u))\partial_{x}^2f^{\varepsilon}_{i(u)}(u,x(u),l(u))\\
&\nonumber + b_{i(u)}(u,x(u),l(u))\partial_xf^{\varepsilon}_{i(u)}(u,x(u),l(u))\Big)du\\
&-\ds\int_{t\wedge \tau^\theta}^{t\wedge \tau^\zeta}\big(\partial_lf^{\varepsilon}(u,0,l(u))+\displaystyle\sum_{j=0}^{I}\alpha_j(u,l(u))\partial_{x}f_{i}^{\varepsilon}(u,0,l(u))\big)dl(u)\Bigg)_{t\in[\delta,T-\delta]},
\end{align} 
is a $(\Psi_t)_{\delta\leq t \leq T-\delta}$ martingale under $\P^{x_\ast,i_\ast,l_\ast}$.

To conclude this proof, we are going now to show that the martingale property given in the statement of this proposition holds true for any $f\in \mathfrak{C}^{1,2,0}_{\{0\},\infty} \big(\mathcal{D}_{T,\infty}\big)$, namely: 
\begin{eqnarray}\label{eq: martingal proper}
\E^{\P^{x_\ast,i_\ast,l_\ast}}\big[\mathcal{M}_t \big(f\big)|{\Psi}_s\big]=\mathcal{M}_s \big(f\big)~~\P^{x_\ast,i_\ast,l_\ast}-\text{a.s},\;\;\;\;\forall (t,s)\in [0,T]^2,~~t\ge s,
\end{eqnarray}
where the process $\Big(\mathcal{M}_t \big(f\big)\Big)_{t\in [0,T]}$ is given by:
\begin{align}\label{eq : martin} 
\nonumber &\Bigg(\mathcal{M}_t \big(f\big):=f_{i(t)}(t,x(t),l(t)) - f_{i_\ast}(0,x_\ast,l_\ast)-\\
\nonumber &\displaystyle\int_{0}^{t}\Big(\partial_tf_{i(u)}(u,x(u),\ell(u))+\displaystyle\frac{1}{2}\sigma_{i(u)}^2(u,x(u),l(u))\partial_{x}^2f_{i(u)}(u,x(u),l(u))\\
&\nonumber +b_{i(u)}(u,x(u),l(u))\partial_xf_{i(u)}(u,x(u),l(u))\Big)du\\
&-\ds\int_{0}^{t}\Big(\partial_lf(u,0,l(u))+\displaystyle\sum_{j=0}^{I}\alpha_j(u,l(u))\partial_{x}f_{i}(u,0,l(u))\Big)dl(u)\Bigg)_{t\in[0,T]},~~\P^{x_\ast,i_\ast,l_\ast}-\text{a.s}.
\end{align}
Recall that 
\begin{eqnarray}
\label{eq:martingale-regul-prop}
\E^{\P^{x_\ast,i_\ast,l_\ast}}\big[\mathcal{M}_t^{\delta,\theta,\zeta}\big(f^{\varepsilon}\big)|{\Psi}_{s}\big]=\mathcal{M}_s^{\delta,\theta,\zeta}\big(f^{\varepsilon}\big),~~\P^{x_\ast,i_\ast,l_\ast}-\text{a.s.}
\end{eqnarray}
for any $(t,s)\in [\delta,T-\delta]^2$ with $t\ge s$.

Let us introduce the following decomposition
\begin{eqnarray}
\label{eq:decomposition-mart-regul}
 \forall t\in [\delta,T-\delta],~~\mathcal{M}_t^{\delta,\theta,\zeta}\big(f^{\varepsilon}\big)=\mathcal{I}_{1,t}^{\delta,\theta,\zeta}\big(f^{\varepsilon}\big)+\mathcal{I}_{2,t}^{\delta,\theta,\zeta}\big(f^{\varepsilon}\big)+\mathcal{I}_{3,t}^{\delta,\theta,\zeta}\big(f^{\varepsilon}\big),~~\P^{x_\ast,i_\ast,l_\ast}-\text{a.s.}
\end{eqnarray}
where $\forall t\in [\delta,T-\delta]$:
\begin{align*}
&\mathcal{I}_{1,t}^{\delta,\theta,\zeta}\big(f^{\varepsilon}\big)=f^{\varepsilon}_{i(t\wedge \tau^\zeta)}(t\wedge \tau^\zeta,x(t\wedge \tau^\zeta),l(t\wedge \tau^\zeta)) - f^{\varepsilon}_{i(t\wedge \tau^\theta)}(t\wedge \tau^\theta,x(t\wedge \tau^\theta),l(t\wedge \tau^\theta)),\\
&\mathcal{I}_{2,t}^{\delta,\theta,\zeta}\big(f^{\varepsilon}\big)=\displaystyle\int_{t\wedge \tau^\theta}^{t\wedge \tau^\zeta}\Big(\partial_tf^{\varepsilon}_{i(u)}(u,x(u),l(u))+\displaystyle\frac{1}{2}\sigma_{i(u)}^2(u,x(u),l(u))\partial_{x}^2f^{\varepsilon}_{i(u)}(u,x(u),l(u))+\\
 &\hspace{2,4 cm}+b_{i(u)}(u,x(u),l(u))\partial_xf^{\varepsilon}_{i(u)}(u,x(u),l(u)\Big)du,\\
&\mathcal{I}_{3,t}^{\delta,\theta,\zeta}\big(f^{\varepsilon}\big)=\ds\int_{t\wedge \tau^\theta}^{t\wedge \tau^\zeta}\Big(\partial_lf^{\varepsilon}(u,0,l(u))+\displaystyle\sum_{j=0}^{I}\alpha_j(u,l(u))\partial_{x}f_{i}^{\varepsilon}(u,0,l(u))\Big)dl(u),~~~\P^{x_\ast,i_\ast,l_\ast}~~\text{a.s.}
\end{align*}
We show first that:
\begin{eqnarray}\label{eq premier cvg}
 \forall t\in [\delta,T-\delta],~~\lim_{\theta \to 0} \limsup_{\varepsilon \to 0}\E^{\P^{x_\ast,i_\ast,l_\ast}}\Big[|\mathcal{M}_t^{\delta,\theta,\zeta}\big(f^{\varepsilon}\big)-\mathcal{M}_t^{\delta,\zeta}\big(f\big)|\Big]=0,
\end{eqnarray}
where:
\begin{align}\label{eq : martin tronqu-2} 
\nonumber &\Bigg(\mathcal{M}_t^{\delta,\zeta}\big(f\big):=f_{i(t\wedge \tau^\zeta)}(t\wedge \tau^\zeta,x(t\wedge \tau^\zeta),l(t\wedge \tau^\zeta)) - f_{i(\delta)}(\delta,x(\delta),l(\delta))-\\
\nonumber &\displaystyle\int_{\delta}^{t\wedge \tau^\zeta}\big(\partial_tf_{i(u)}(u,x(u),l(u))+\displaystyle\frac{1}{2}\sigma_{i(u)}^2(u,x(u),l(u))\partial_{x}^2f_{i(u)}(u,x(u),l(u))\\
\nonumber &+b_{i(u)}(u,x(u),l(u))\partial_xf_{i(u)}(u,x(u),l(u))\big)du\\
&-\ds\int_{\delta}^{t\wedge \tau^\zeta}\big(\partial_lf(u,0,l(u))+\displaystyle\sum_{j=0}^{I}\alpha_j(u,l(u))\partial_{x}f_{i}^{\varepsilon}(u,0,l(u))\big)dl(u)\Bigg)_{t\in[\delta,T-\delta]}.
\end{align} 
(Observe that the limit in \eqref{eq premier cvg} is legitimate since we only assumed $\theta>>\varepsilon$ in the previous).

In order to prove \eqref{eq premier cvg}, we use the decomposition \eqref{eq:decomposition-mart-regul}. 

Using transparent notations, because of the uniform convergences given in \eqref{conv unifo en 0} \eqref{conv unifo global} the term $\mathcal{I}_{1,t}^{\delta,\theta,\zeta}\big(f^{\varepsilon}\big)$ (resp. $\mathcal{I}_{3,t}^{\delta,\theta,\zeta}\big(f^{\varepsilon}\big)$) is easily shown to converge in $L^{1}(\P^{i_\ast,x_\ast,l_\ast})$ to its corresponding limit $\mathcal{I}_{1,t}^{\delta,\theta, \zeta}\big(f\big)$ (resp. $\mathcal{I}_{3,t}^{\delta,\theta, \zeta}\big(f\big)$) as $\varepsilon$ tends to $0$. Next, from the definition of $\tau^\theta$ (given in \eqref{def:temps-arret}) and the continuity of $t\mapsto l(t)$, we have that $\tau^\theta \xrightarrow[\theta\searrow 0]{\P^{i_\ast,x_\ast,l_\ast}-\text{a.s.}} \delta$, which implies that $t\wedge \tau^\theta$ converges almost surely to $\delta$ since $t\in [\delta, T-\delta]$. In turn, the term $\mathcal{I}_{1,t}^{\delta,\theta,\zeta}\big(f\big)$ (resp. $\mathcal{I}_{3,t}^{\delta,\theta,\zeta}\big(f\big)$) is easily shown to converge in $L^{1}(\P^{i_\ast,x_\ast,l_\ast})$ to its corresponding limit term $\mathcal{I}_{1,t}^{\delta,\zeta}\big(f\big)$ (resp. $\mathcal{I}_{3,t}^{\delta,\zeta}\big(f\big)$) as $\theta$ tends to $0$.

Let us now turn to the term $\mathcal{I}_{2,t}^{\delta,\theta,\zeta}\big(f^{\varepsilon}\big)$. 

We write for $t\in [\delta,T-\delta]$:
\begin{align*}
&\E^{\P^{x_\ast,i_\ast,l_\ast}}\Big[\big|\mathcal{I}_{2,t}^{\delta,\theta,\zeta}\big(f^{\varepsilon}\big)-\ds\int_{t\wedge \tau^\theta}^{t\wedge \tau^\zeta}\big(\partial_tf_{i(u)}(u,x(u),l(u))+\displaystyle\frac{1}{2}\sigma_{i(u)}^2(u,x(u),l(u))\partial_{x}^2f_{i(u)}(u,x(u),l(u))+\\
&b_{i(u)}(u,x(u),l(u))\partial_xf_{i(u)}(u,x(u),l(u))\big)du)\big|\Big ] \leq C \Big( \E^{\P^{x_\ast,i_\ast,l_\ast}}\Big[\ds \int_{\delta}^{T-\delta}\mathbf{1}_{\{x(u)\leq \theta\}} du\Big]+\\
&\E^{\P^{x_\ast,i_\ast,l_\ast}}\Big[\ds \int_{t\wedge \tau^\theta}^{t\wedge \tau^\zeta}\Big(|\partial_tf^\varepsilon-\partial_tf|+|\partial_xf^\varepsilon-\partial_xf|+|\partial_x^2f^\varepsilon-\partial_x^2f|\Big)_{i(u)}(u,x(u),l(u))\mathbf{1}_{\{x(u) \ge \theta\}}ds\Big),
\end{align*}
for a uniform constant $C>0$ uniformly independent of the parameters $\delta,\theta,\zeta,\varepsilon$. We see then that \eqref{eq premier cvg} must hold true, by making use of the {\it non-stickiness} condition given in Proposition \ref{prop:non-stickness}, Lebesgue's theorem and the convergence properties established in \eqref{conv unifo global}, \eqref{conv unifo laplac} and \eqref{conv unifo deriv temps}.

 Fix $s,t\in [\delta, T-\delta]$ with $s\leq t$. From \eqref{eq premier cvg} (applied to $s$ and $t$) and passing to the $\P^{x_\ast,i_\ast,l_\ast}-\text{a.s.}$ limit in \eqref{eq:martingale-regul-prop} (using sub sequences denoted abusively by $\varepsilon$ and $\theta$), we have the following $\P^{x_\ast,i_\ast,l_\ast}-\text{a.s.}$ equality:
\begin{eqnarray*}
 \E^{\P^{x_\ast,i_\ast,l_\ast}}\big[\mathcal{M}_t^{\delta,\zeta}\big(f\big)|{\Psi}_{s}\big]=\mathcal{M}_s^{\delta,,\zeta}\big(f\big),~~\P^{x_\ast,i_\ast,l_\ast}~~\text{a.s.}
\end{eqnarray*}

To yield finally \eqref{eq: martingal proper} and conclude the proof, we send in the last equation $\zeta \to +\infty$ and $\delta \to 0$.  Applying Lebesgue's dominated convergence theorem, we see that the result of the proposition follows from the continuity of the process $\pare{\mathcal{M}_u\big(f\big)}_{u\in [0,T]}$.
\end{proof}
As it is classical in the literature for the characterization of the uniqueness in distribution of stochastic processes, we introduce a determining class functional space that is adapted to our purpose.

\begin{Lemma}\label{lm : class determining }
Denote $\mathcal{D}_{\infty}:=\mathcal{J}\times[0,+\infty)$. 

Let $\P$ and $\Q$ be two probability measures defined on the measurable space $\pare{\mathcal{D}_{\infty},\mathbb{B}\big(\mathcal{D}_{\infty}\big)}$ satisfying:
\begin{equation}
\label{eq:P-Q-jonction}
\forall\,V\textrm{open in }\B([0,\infty)),\;\;\;\P(\{{\bf 0}\}\times V) = \Q(\{{\bf 0}\}\times V) = 0.
\end{equation} 
Fix any $\mathcal{T}\in (0,T]$. Assume that for any $g^\mathcal{T}\in \mathcal{C}^{\infty}_b\big(\mathcal{D}_{\infty}\big)$ satisfying the following compatibility condition
\begin{align}
\label{eq:compatitbility-star}
&\partial_lg^\mathcal{T}(0,l)+\sum_{i=1}^I\alpha_i(\mathcal{T},l)\partial_xg_i^\mathcal{T}(0,l)=0,~~l\in[0,+\infty),\;\;\;\;(\ast)
\end{align}
we have:
$$\int_{\big((x,i),l\big)\in\mathcal{D}_{\infty}}g^\mathcal{T}_i(x,l)d\P=\int_{\big((x,i),l\big)\in\mathcal{D}_{\infty}}g^\mathcal{T}_i(x,l)d\Q.$$
Then, $\P=\Q$. 
In particular the class
\begin{equation}
\label{eq:determining-class}
\mathcal{G}:=\left \{g^\mathcal{T}\in \mathcal{C}^{\infty}_b\big(\mathcal{D}_{\infty}\big)~|~g^\mathcal{T}\,\,\textrm{satisfies }\,(\ast) \right \}
\end{equation}
is a determining class over the set of probabilities on $\pare{\mathcal{D}_{\infty},\mathbb{B}\big(\mathcal{D}_{\infty}\big)}$
\end{Lemma}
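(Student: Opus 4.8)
The plan is to exploit the hypothesis \eqref{eq:P-Q-jonction}, which says that neither $\P$ nor $\Q$ charges the junction set $\{{\bf 0}\}\times[0,+\infty)$, in order to sidestep the compatibility constraint $(\ast)$. The crucial observation is that any $g\in\mathcal{C}^{\infty}_b(\mathcal{D}_{\infty})$ whose support is contained in $\{((x,i),l):x\ge \eta\}$ for some $\eta>0$ belongs automatically to the class $\mathcal{G}$: such a $g$ vanishes together with all its derivatives in a neighbourhood of $\{x=0\}$, so that both the continuity condition at ${\bf 0}$ and the compatibility condition $\partial_l g(0,l)+\sum_{i=1}^I\alpha_i(\mathcal{T},l)\partial_x g_i(0,l)=0$ hold trivially, regardless of the value of $\mathcal{T}$ or of the coefficients $\alpha_i$. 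Hence the assumption of the lemma gives $\int g\,d\P=\int g\,d\Q$ for every such $g$.

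Next I would separate the rays. Fix $i\in[I]$ and let $\psi\in\mathcal{C}^{\infty}_b\big((0,+\infty)\times[0,+\infty)\big)$ be supported in $\{x\ge\eta\}$ for some $\eta>0$. Extending $\psi$ by $0$ on all rays $j\neq i$ produces a function in $\mathcal{G}$ by the observation above, whence $\int_{R_i}\psi\,d\P=\int_{R_i}\psi\,d\Q$, where $R_i:=\{((x,i),l):x>0\}\cong(0,+\infty)\times[0,+\infty)$ denotes the open $i$-th ray (crossed with the local-time coordinate). Thus the restricted measures $\P|_{R_i}$ and $\Q|_{R_i}$ coincide when tested against every smooth bounded function vanishing near $\{x=0\}$ and supported on a single ray.

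It then remains to upgrade this into an equality of measures. For $0<a<b$ and $0\le c<d$ I would approximate $\mathbf{1}_{[a,b]\times[c,d]}$ by a standard mollification $\psi$ of the previous type, letting the mollification parameter tend to $0$; dominated convergence yields $\P|_{R_i}([a,b]\times[c,d])=\Q|_{R_i}([a,b]\times[c,d])$. Since the rectangles $[a,b]\times[c,d]$ with $a>0$ form a $\pi$-system generating the Borel $\sigma$-algebra of $(0,+\infty)\times[0,+\infty)$, a monotone class (Dynkin) argument — after exhausting the space by compact rectangles to accommodate finiteness — gives $\P|_{R_i}=\Q|_{R_i}$ on the open ray. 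Finally, summing over $i\in[I]$ and invoking \eqref{eq:P-Q-jonction} to discard the $\P$- and $\Q$-null junction $\{{\bf 0}\}\times[0,+\infty)$, I would conclude $\P=\Q$, which also establishes that $\mathcal{G}$ is a determining class.

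The point that requires care is precisely the interplay between the constraint $(\ast)$ and the hypothesis \eqref{eq:P-Q-jonction}: the constraint prevents one from using arbitrary smooth functions, and the whole argument rests on the fact that functions vanishing near the junction evade the constraint while still forming a determining family for the part of the measures away from $\{x=0\}$, which by \eqref{eq:P-Q-jonction} carries all the mass. The remaining ingredients — keeping the mollified test functions bounded and inside the class, and the passage from rectangles to general Borel sets — are routine.
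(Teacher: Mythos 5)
Your proof is correct and rests on exactly the same key observation as the paper's: any smooth bounded function supported in $\{x\geq\eta\}$ vacuously satisfies both the continuity condition at ${\bf 0}$ and the compatibility condition $(\ast)$, so such functions are admissible test functions, and by \eqref{eq:P-Q-jonction} they suffice to determine the measures since the junction carries no mass. The only divergence is in the routine final step — the paper approximates indicators of arbitrary open sets $U$ by mollified Urysohn functions $[nd(\cdot,U^c)]\wedge 1$ restricted to $U^{\delta}=U\cap\{x\geq\delta\}$ and then sends $\varepsilon,\,n^{-1},\,\delta$ to their limits, whereas you work ray by ray with rectangles and a $\pi$--$\lambda$ argument — and both implementations are standard.
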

\begin{proof}
To obtain the result, it is enough to show that $\P$ and $\Q$ are equal for any open sets $U$ of the Polish space $\mathcal{D}_{\infty}$, endowed with the following metric:
$$\forall \Big(((x,i),l\big),((y,j),q\big)\Big)\in \mathcal{D}_{\infty}^2,~~d\Big(((x,i),l\big),((y,j),q\big)\Big)=d^{\mathcal{J}}\big((x,i),(y,j)\big)+|l-q|_{\R}.$$
Fix $U$ a topological set of $\mathcal{D}_{\infty}=\mathcal{J}\times[0,+\infty)$. Let $n\in \N^*$ and define the following continuous map $f_n^U$ by:
\begin{equation}
\label{eq:f-n-U}
f_n^U:=
\begin{cases}
   \mathcal{D}_{\infty}\to \R \\
   \big((x,i),l\big)=[nd\Big(\big((x,i),l\big),U^c\Big)]\wedge 1 := f_{i,n}^U(x,l)
\end{cases}
\end{equation}
where:
$$d\Big(\big((x,i),l\big),U^c\Big)=\inf \Big\{~d\Big(\big((x,i),l\big),\big((y,j),q\big)\Big),~((y,j),q\big)\in U^c~\Big\}.$$
Let $\varepsilon>0$. We introduce first $\rho_\varepsilon$ an infinite differentiable kernel converging weakly to the Dirac mass at $0$ on $\R^2$ in the sense of distribution, with $\ds \int_{\R^2}\rho_\varepsilon =1$, and compact support $[-\varepsilon,\varepsilon]^2$. 
Define now for all $i\in [I]$ the following map $f_{i,n,\varepsilon}^U$ on $[0,+\infty)^2$ such that $\forall (x,l)\in [0,+\infty)^2$:
$$f_{i,n,\varepsilon}^U(x,l)=\int_{\R^2}f_{i,n}^U(|y|,|z|) \rho_\varepsilon (x-y,l-z) dy dz$$
(using the notation \eqref{eq:f-n-U}).
We obtain clearly that $f_{i,n,\varepsilon}^U \in \mathcal{C}^\infty_b\big([0,+\infty)^2,\R\big)$ and converges locally uniformly on the compact sets of $[0,+\infty)^2$ to $f_{i,n}^U$ as $\varepsilon \to 0$ (see the arguments used to prove \eqref{conv unifo en 0} in Proposition \ref{pr : weak Ito formula}).
Now fix $\delta>0$ a parameter that we will be chosen large enough from $\varepsilon$ and introduce the following set:
$$U^\delta =\Big\{\big((x,i),l\big)\in U,~x\ge \delta\Big\}.$$
Remark now that as soon as we choose $\delta>3\varepsilon$, for any $x\in [0,\varepsilon/2]$ we have that:

$\forall w \in \{y\in \R,~|y-x|\leq \varepsilon\}$:  $$w\leq x+\varepsilon \leq 3\varepsilon/2< \delta/2.$$ 
This implies that: 
$\forall x\in [0,\varepsilon/2],\;\forall l\in [0,+\infty), \;\forall i\in [I],\;\;\;f_{i,n,\varepsilon}^{U^\delta} (x,l)=0.$
Therefore, the corresponding map $f_{n,\varepsilon}^{U^\delta}$ is in the class $\mathcal{C}^{\infty}_b\big(\mathcal{D}_{\infty}\big)$ and satisfies the compatibility condition:
\begin{align*}
&\partial_lf_{n,\varepsilon}^{U^\delta}(0,l)+\sum_{i=1}^I\alpha_i(\mathcal{T},l)\partial_xf_{i,n,\varepsilon}^{U^\delta}(0,l)=0,~~l\in[0,+\infty),
\end{align*}
for any fixed $\mathcal{T}\in (0,T]$.
Hence, by assumption, we have:
$$\int_{((x,i),l)\in\mathcal{D}_{\infty}}f_{i,n,\varepsilon}^{U^\delta}(x,l)d\P=\int_{((x,i),l)\in\mathcal{D}_{\infty}}f_{i,n,\varepsilon}^{U^\delta}(x,l)d\Q.$$
Sending first $\varepsilon \to 0$ we obtain:
$$\int_{((x,i),l)\in\mathcal{D}_{\infty}}f_{i,n}^{U^\delta}(x,l)d\P=\int_{((x,i),l)\in\mathcal{D}_{\infty}}f_{i,n}^{U^\delta}(x,l)d\Q.$$
As $f_{i,n}^{U^\delta}$ converges pointwise to $\mathbf{1}_{U^\delta}$ when $n\to +\infty$, we deduce from the last inequality that:
$$\forall \delta>0,~~\P(U^\delta)=\Q(U^\delta),$$
and finally as $\delta \to 0$, by monotone convergence, we obtain that
$$\P(U^\ast)=\Q(U^\ast)$$
where $U^\ast:=\{(x,i,l)\in U,\;(x,i)={\bf 0}\}$. In turn, from \eqref{eq:P-Q-jonction}, we obtain that $$\P(U)=\Q(U)$$
holds for any open set $U$ of the Polish space $D_\infty$. This concludes the proof.
\end{proof}

\subsection{Proof of Theorem \ref{th: exist Spider } II - Uniqueness}
\begin{proof}{\it{Theorem \ref{th: exist Spider } II - Uniqueness}}

Since the canonical space $\Phi$ is a Polish space, to ensure the uniqueness for the martingale problem \emph{$\big(\mathcal{S}_{pi}-\mathcal{M}_{ar}\big)$}, it is enough to show  that if two probability measures $\P^{x_\ast,i_\ast,l_\ast}$ and $\Q^{x_\ast,i_\ast,l_\ast}$ are solutions of \emph{$\big(\mathcal{S}_{pi}-\mathcal{M}_{ar}\big)$}, then we have for all $n\in \N^*$, for all $(t_1,\ldots,t_n)\in [0,T]^n$, and for all $(g^1\ldots g^n)$ maps belonging to a class of determining functions:
\begin{align}\label{eq: egalite proba marginal}
\nonumber &\E^{\P^{x_\ast,i_\ast,l_\ast}}\big[g^1_{i(t_1)}\big(x(t_1),l(t_1)\big)\times \ldots\times g^n_{i(t_n)}\big(x(t_n),l(t_n)\big) \big]=\\
&\E^{\Q^{x_\ast,i_\ast,l_\ast}}\big[g^1_{i(t_1)}\big(x(t_1),l(t_1)\big)\times \ldots\times g^n_{i(t_n)}\big(x(t_n),l(t_n)\big)\big].\end{align}
We will use in the sequel the class $\mathcal{G}$ of determining functions introduced in Lemma \ref{lm : class determining } (see \eqref{eq:determining-class}). Indeed, from the result of Lemma \ref{lm : class determining } and using the same arguments given in Theorem 6.2.3 in \cite{Stroock}, we see that proving \eqref{eq: egalite proba marginal} is equivalent to show that for all $\mathcal{T}\in (0,T]$, for any $g\in \mathcal{C}^{\infty}_b\big(\mathcal{D}_{\infty}\big)$ satisfying the compatibility condition $(\ast)$ given by \eqref{eq:compatitbility-star} in Lemma \ref{lm : class determining } we have:
\begin{align}
\label{eq: zero-proba-P-Q}
&\forall\,V\textrm{open in}\;\mathbb{B}\big([0,+\infty)\big),\nonumber\\
&\P^{x_\ast,i_\ast,l_\ast}\pare{(x(\mathcal{T}),i(\mathcal{T}),l(\mathcal{T}))\in \{{\bf 0}\}\times V} = \Q^{x_\ast,i_\ast,l_\ast}\pare{(x(\mathcal{T}),i(\mathcal{T}),l(\mathcal{T}))\in \{{\bf 0}\}\times V} = 0,
\end{align}
and
\begin{eqnarray}\label{eq: egalite proba marginal rafinee}
\E^{\P^{x_\ast,i_\ast,l_\ast}}\big[g_{i(\mathcal{T})}\big(x(\mathcal{T}),l(\mathcal{T})\big) \big]=\E^{\Q^{x_\ast,i_\ast,l_\ast}}\big[g_{i(\mathcal{T})}\big(x(\mathcal{T}),l(\mathcal{T})\big)\big]\end{eqnarray}
Note that \eqref{eq: zero-proba-P-Q} is a direct consequence of the non stickiness property of Proposition \ref{prop:non-stickness} and the continuity of the process $(x(t))_{t\in [0,T]}$.

Denote by $u$ the unique solution of the following backward parabolic system
\begin{eqnarray*}
\begin{cases}\partial_tu_i(t,x,l)+\ds \frac{1}{2}\sigma^2_i(t,x,l)\partial_x^2u_i(t,x,l)\\
\hspace{2,2 cm}+b_i(t,x,l)\partial_xu_i(t,x,l)=0,~~(t,x,l)\in (0,\mathcal{T})\times (0,+\infty)^2,\\
\partial_lu(t,0,l)+\displaystyle \sum_{i=1}^I \alpha_i(t,l)\partial_xu_i(t,0,l)=0,~~(t,l)\in(0,\mathcal{T})\times(0,+\infty)\\
\forall (i,j)\in[I]^2,~~u_i(t,0,l)=u_j(t,0,l)=u(t,0,l),~~(t,l)\in[0,\mathcal{T}]\times[0,+\infty)^2,\\
\forall i\in[I],~~ u_i(\mathcal{T},x,l)=g_i(x,l),~~(x,l)\in[0,+\infty)^2,
\end{cases}
\end{eqnarray*}
which, in view of Theorem \ref{th : exis para with l bord infini}, is uniquely solvable in the class $\mathfrak{C}^{1,2,0}_{\{0\},\infty} \big(\mathcal{D}_{\mathcal{T},\infty}\big)$. Now, applying the result of Proposition \ref{pr : weak Ito formula} with the solution $u\in \mathfrak{C}^{1,2,0}_{\{0\},\infty} \big(\mathcal{D}_{\mathcal{T},\infty}\big)$ as a test function, we have that the martingale property holds true between time $t=0$ and time $t=\mathcal{T}$. Using the PDE system satisfied by the solution $u$, this leads to get 
\begin{align*}\nonumber 
\E^{\P^{x_\ast,i_\ast,l_\ast}}\big[u_{i(\mathcal{T})}\big(x(\mathcal{T}),l(\mathcal{T}\big)\big] = u_{i_\ast}(0,x_\ast,l_\ast) = \E^{\P^{x_\ast,i_\ast,l_\ast}}\big[g_{i(\mathcal{T})}\big(x(\mathcal{T}),l(\mathcal{T})\big)\big]
\end{align*}
for the probability measure $\P^{x_\ast,i_\ast,l_\ast}$.

With the same arguments for the probability measure $\Q^{x_\ast,i_\ast,l_\ast}$, we have similarly that
\begin{eqnarray*}\nonumber \E^{\Q^{x_\ast,i_\ast,l_\ast}}\big[g_{i(\mathcal{T})}\big(x(\mathcal{T}),l(\mathcal{T})\big)\big]=\E^{\Q^{x_\ast,i_\ast,l_\ast}}\big[u_{i(\mathcal{T})}\big(x(\mathcal{T}),l(\mathcal{T}\big)\big]=u_{i_\ast}(0,x_\ast,l_\ast).\end{eqnarray*}
We obtain therefore that \eqref{eq: zero-proba-P-Q} and \eqref{eq: egalite proba marginal rafinee}, or equivalently \eqref{eq: egalite proba marginal}, hold true and the proof is completed.
\end{proof}

\section{Two deriving results}
\label{sec:deriving-results}
\subsection{A spider Brownian motion whose spinning measure depends on its own local time}
\label{section-spiderBM}

As an application of Theorem \ref{th: exist Spider } with the data $(\sigma_i)_{i\in \{1,\dots, I\}} \equiv 1$ and  $(b_i)_{i\in \{1,\dots, I\}}\equiv 0$, we can assert that there is weak existence of an {\it inhomogeneous Brownian spider} $\pare{S_t, i(t), \ell_t}_{t\geq 0}$ constructed on some filtered probability space
$(\Omega,\mathcal{F}, (\mathcal{F}_t)_{t \ge 0}, (\mathbb{P}^{x_\ast,i_\ast,l_\ast}))$ {\it ''whose probability of going at some branch from the junction point is driven by a spinning probability measure $t\mapsto \alpha(t, \ell_t)$ that depends almost surely both on the current running time time and its own local time spent at the junction''}. The aim of this section is to give {\it by hand} some insights on this inhomogeneous Brownian spider $\pare{(S_t, i(t)), \ell_t}_{t\geq 0}$. We believe that this elementary study shades some light on the seminal ideas that have been developed in this paper.

\subsubsection{Heuristics on the generator}
\label{subsection-generator}
Let us begin by the formal derivation of the generator of the inhomogeneous Brownian spider $\pare{\pare{S_t, i(t), \ell^0_t}_{t\geq 0}, \P}$ whose spinning probability measure depends both the current running time and its own local time at the junction. This paragraph may help the reader to understand where the {\it local time Kirchhoff's transmission condition} \eqref{eq:condition-transmission-intro} comes from.

In order to perform our computations, we will need to consider $(W_t)_{t \ge 0}$ an auxiliary standard $(\Omega,(\mathbb{P}^W_x)_{x\in \R})$ ($\R$-valued) Brownian motion.
\medskip

Let $f\in \mathfrak{C}^{1,2,0}_{\{0\},b} \big(\mathcal{D}_{T,\infty}\big)$ (see \eqref{def: class regula EDP} for the definition of this function space) : because there is no dependence on the time variable in the coefficients, let us simplify the exposition and assume that $f(s,.,.,.) = f(u,.,.,.)$ for any $s,u>0$ so that $f$ belong to the subset of $\mathfrak{C}^{1,2,0}_{\{0\},b} \big(\mathcal{D}_{T,\infty}\big)$ of constant function over the time variable. 

In the sequel we will write $L^0$ to denote the local time at $0$ of the auxiliary Brownian motion $W$.
\medskip

Let $t>0$ be fixed. 

{\bf First step}

Assume at first that $x_\ast > 0$. Introduce
$
\tau_0 = \inf(u\geq 0~:~S_u=0)$ and $T_0 = \inf(u\geq 0~:~W_t=0)$.
From Markov's property applied at time $t$
\begin{align*}
&\E\croc{f(S_{t+\varepsilon}, i(t+\varepsilon), \ell_{t+\varepsilon}) - f(t, S_t, i(t), \ell_t)~|~S_t = x_\ast, i(t) =i_\ast, \ell_t=l_\ast}\\
&=\E\croc{f({S}_{\varepsilon}, {i}(\varepsilon), l_\ast+ {\ell}_{\varepsilon}) - f(x_\ast, i_\ast, l_\ast)}\\
&=\E\croc{\pare{f({S}_{\varepsilon}, {i}(\varepsilon), l_\ast + {\ell}_{\varepsilon}) - f(x_\ast, i_\ast, l_\ast)}{\bf 1}_{\tau_0 > \varepsilon}}+ \E\croc{\pare{f(S_{\varepsilon}, {i}(\varepsilon), l_\ast+{\ell}_{\varepsilon}) - f(x_\ast, i_\ast, l_\ast)}{\bf 1}_{\tau_0 \leq \varepsilon}}\\&=\E^W_{x_\ast}\croc{\pare{f(|W|_{\varepsilon}, i, l_\ast) - f(x_\ast, i_\ast, l_\ast)}{\bf 1}_{T_0 > \varepsilon}}+ \E\croc{\pare{f(S_{\varepsilon}, {i}(\varepsilon), l_\ast+{\ell}_{\varepsilon}) -f(x_\ast, i_\ast, l_\ast)}{\bf 1}_{\tau_0\leq \varepsilon}}.
\end{align*}
\begin{align*}
\left .\begin{array}{r}
\ds \frac{1}{\varepsilon}\E\croc{\pare{f(|W|_{\varepsilon}, i, l_\ast) - f(x_\ast, i_\ast, l_\ast)}{\bf 1}_{T_0\leq \varepsilon}}\\
\ds \frac{1}{\varepsilon}\E\croc{\pare{f(S_{\varepsilon}, {i}(\varepsilon), l_\ast+{\ell}_{\varepsilon}) - f(x_\ast, i_\ast, l_\ast)}{\bf 1}_{\tau_0\leq \varepsilon}}
\end{array}\right \}&\leq 2|f|_\infty \frac{1}{\varepsilon}\P^W_{x_\ast}(T_0\leq \varepsilon)\\
&\xrightarrow{\varepsilon \searrow 0+} 0.
\end{align*}
So using the definition of the generator of the standard Brownian motion, we get
\begin{align*}
&\E\croc{f(S_{t+\varepsilon}, i(t+\varepsilon), \ell_{t+\varepsilon}) - f(S_t, i(t), \ell_t)~|~S_t = x_\ast, i(t) =i_\ast, \ell_t=l_\ast}\\
&\xrightarrow{\varepsilon \searrow 0+}\frac{1}{2}\partial^2_{xx}f(t, x_\ast,i_\ast,l_\ast)\;\;\;\textrm{as expected}.
\end{align*}

{\bf Second step}

We know set $x={\bf 0}$. For $u\geq 0$, we denote $g_u = \sup(s\leq u~:~S_{u} = 0)$ and $G_u = \sup(s\leq u~:~W_{u} = 0)$. We have
\begin{align*}
&\E\croc{f(S_{t+\varepsilon}, i(t+\varepsilon), \ell_{t+\varepsilon}) - f(S_t, i(t), \ell_t)~|~S_t = {\bf 0}, i(t)=1,\dots, I, \ell_t=l_\ast}\\
&=\E^{{\bf 0},l}\croc{f({S}_{\varepsilon}, {i}(\varepsilon), l+{\ell}_{\varepsilon}) - f(0,i,l_\ast)|~S_t = {\bf 0}, i(t)=1,\dots, I, \ell_t=l_\ast}\\
&=\E^{{\bf 0},l}\croc{\sum_{j=1}^I \alpha_j(t+g_{\varepsilon},l_\ast + {\ell}_{\varepsilon})f(S_\varepsilon, j, l_\ast+{\ell}_{{\varepsilon}}) - f(0,i,l_\ast)}\\
&=\sum_{j=1}^I \E^{W}_{0}\croc{\alpha_j(t+G_{\varepsilon},l_\ast + L^0_{\varepsilon})\pare{f(|W_\varepsilon|, j, l_\ast+ L^0_{{\varepsilon}}) - f(0,i,l_\ast)}}\\
&=\sum_{j=1}^I \E^{W}_{0}\croc{\alpha_j(t+\varepsilon G_{1},l_\ast + \sqrt{\varepsilon}L^0_1)\pare{f(\sqrt{\varepsilon}|W_1|, j, l_\ast+\sqrt{\varepsilon}L^0_{1}) - f(0,j,l_\ast)}}.
\end{align*}
Hence,
\begin{align}
\label{eq:generateur-at-junction-1}
&\frac{1}{\varepsilon}\E\croc{f(S_{t+\varepsilon}, i(t+\varepsilon), \ell_{t+\varepsilon}) - f(S_t, i(t), \ell_t)~|~S_t = {\bf 0}, i(t)=1,\dots, I, \ell_t=l_\ast}\nonumber\\
&=\sum_{j=1}^I \E^{W}_{0}\croc{\alpha_j(t+\varepsilon G_{1},l_\ast + \sqrt{\varepsilon}L^0_1)\frac{1}{\varepsilon}\pare{f(\sqrt{\varepsilon}|W_1|, j, l+\sqrt{\varepsilon}L^0_{1}) - f(0,j,l_\ast)}}.
\end{align}

By the joint continuity of $\alpha$ in both variables
\begin{align*}
\alpha_j(t+\varepsilon G_{1},l_\ast + \sqrt{\varepsilon}L^0_1) = \alpha_j(t,l_\ast) + h^j_{t,l_\ast}(\varepsilon G_{1} + \sqrt{\varepsilon}L^0_1)
\end{align*}
with $|h^j_{t,l_\ast}(\varepsilon G_{1} + \sqrt{\varepsilon}L^0_1)| \leq |\overline{\alpha}|(\varepsilon G_{1}, \sqrt{\varepsilon}L^0_1)$
by $\text{Assumption } (\mathcal{H})-(iii)$. 
Using a Taylor's expansion, we have
\begin{align*}
&\frac{1}{\varepsilon}\pare{f(\sqrt{\varepsilon}|W_1|, j, l_\ast+\sqrt{\varepsilon}L^0_{1}) - f(0,j,l_\ast)}\\
&= \frac{1}{\varepsilon}
\pare{\partial_xf(0,j,l_\ast)\sqrt{\varepsilon}|W_1| + \partial_lf(0,j,l_\ast)\sqrt{\varepsilon}L^0_1}\\
&\hspace{2.0 cm}+ \frac{1}{\varepsilon}\croc{\pare{\sqrt{\varepsilon}|W_1|, \sqrt{\varepsilon}L^0_1}^{\ast}{\mathbf H}_{x,l}[f(0,j,l_\ast)]\pare{\sqrt{\varepsilon}|W_1|, \sqrt{\varepsilon}L^0_1} + o(\varepsilon |W_1|^2 + \varepsilon (L^0_1)^2)},
\end{align*}
where ${\mathbf H}_{x,l}[f(0,j,l_\ast)]$ stands for the Hessian matrix  w.r.t variables $x,l$ taken at point $(0,j,l_\ast)$ of the function $f(.,j.)$.
So that
\begin{align*}
&\alpha_j(t+\varepsilon G_{1},l + \sqrt{\varepsilon}L^0_1)\frac{1}{\varepsilon}\pare{f(\sqrt{\varepsilon}|W_1|, j, l_\ast+\sqrt{\varepsilon}L^0_{1}) - f(0,j,l_\ast)}\\
&=\alpha_j(t,l_\ast)\frac{1}{\varepsilon}
\pare{\partial_xf(0,j,l_\ast)\sqrt{\varepsilon}|W_1| + \partial_lf(0,j,l_\ast)\sqrt{\varepsilon}L_1^0}\\
&\hspace{2.0 cm}+\alpha_j(t,l_\ast)\croc{\croc{\pare{|W_1|, L^0_1}^{\ast}{\mathbf H}_{x,l}[f(0,j,l_\ast)]\pare{|W_1|, L^0_1} + o(\varepsilon |W_1|^2 + \varepsilon (L^0_1)^2)}}\\
&\hspace{2.0 cm}+h^j_{t,l_\ast}(\varepsilon G_{1} + \sqrt{\varepsilon}L^0_1)\frac{1}{\sqrt{\varepsilon}}
\pare{\partial_xf(0,j,l_\ast)|W_1| + \partial_lf(0,j,l_\ast)L^0_1}\\
&\hspace{0,7 cm}+ h^j_{t,l_\ast}(\varepsilon G_{1} + \sqrt{\varepsilon}L^0_1)\croc{\croc{\pare{|W_1|, L^0_1}^{\ast}{\mathbf H}_{x,l}[f(0,j,l_\ast)]\pare{|W_1|, L^0_1} + o(\varepsilon |W_1|^2 + \varepsilon (L^0_1)^2)}}.
\end{align*}
Remember that $|h^j_{t,l_\ast}(\varepsilon G_{1} + \sqrt{\varepsilon}L^0_1)| \leq |\overline{\alpha}|(\varepsilon G_{1}, \sqrt{\varepsilon}L^0_1)$
by $\text{Assumption } (\mathcal{H})-(iii)$. Then taking expectations with the use of the fact that $\E_0^W(|W_1|) = \E_0^W (L^0_1)$ and plugin in \eqref{eq:generateur-at-junction-1}, we see that there is a $\limsup$ convergence of 
\begin{align*}
\frac{1}{\varepsilon}\E\croc{f(S_{t+\varepsilon}, i(t+\varepsilon), \ell_{t+\varepsilon}) - f(S_t, i(t), \ell_t)~|~S_t = {\bf 0}, i(t)=1,\dots, I, \ell_t=l_\ast}
\end{align*} 
as $\varepsilon \searrow 0+$ if and only if
\begin{equation}
\label{eq:bc-1}
\sum_{j=1}^I \alpha_j(t,l_\ast)\pare{\partial_xf(0,j,l_\ast) + \partial_lf(0,j,l_\ast)}=0.
\end{equation}
Due to the continuity of $f$ at the junction point, note that for any $(i,j)\in \{1,\dots, I\}^2$,
\begin{align*}
\partial_lf(0,i,\ell) &= \lim_{h\rightarrow 0}\frac{f(0,i,\ell + h) - f(0,i,\ell + h)}{h}=\lim_{h\rightarrow 0}\frac{f(0,j,\ell + h) - f(0,j,\ell + h)}{h}=\partial_lf(0,j,\ell)
\end{align*}
which shows that the partial derivative w.r.t variable $\ell$ does not depend on the branch label. Since $\sum_{j=1}^I \alpha_j(t,l_\ast) = 1$, we get in turn that \eqref{eq:bc-1} reads in fact
\[
\partial_lf(0,j,l_\ast) + \sum_{j=1}^I \alpha_j(t,l_\ast)\partial_xf(0,j,l_\ast) =0
\]
giving some insight on the appearance of {\it local time Kirchhoff's transmission condition} \eqref{eq:condition-transmission-intro} and the boundary condition in the formulation of the backward system parabolic PDE \eqref{eq : pde with l bord infini Walsh} associated to our martingale problem.

\subsubsection{The transition density function}
Remember that for all $t,s>0$, the image measure $\P_0^W[|W_t|\in dx,L_t^0(W)\in d\ell,G_t\in ds]$ is given by
\begin{equation}
\label{tridens-RY}
g(x,\ell,s)ds\,d\ell\,dx = {\bf 1}_{s\leq t}\frac{2}{\sqrt{2\pi s^3}}\exp\Big( -\frac{\ell^2}{2s} \Big)\frac{x}{\sqrt{2\pi(t-s)^3}}\exp\pare{-\frac{x^2}{2(t-s)}}{\bf 1}_{x>0}{\bf 1}_{\ell>0}ds\,d\ell \,dx
\end{equation}
(see \cite{Revuz-Yor} Chapter XII, exercise (3.8)).

Using the same ideas as in \cite{Etore-Martinez}, we may state the following intuitive result:
\begin{Proposition}
    For $x\neq {\bf 0}_J$, the transition density measure of $\pare{(S_t, i(t)), \ell_t^0(S)}_{t\geq 0}$ is given by the family $\{p^{{\bf \alpha}}(s,t;(x,i,l),(y,j, l+d\ell)~|~s<t,\, \pare{(x,i,l),(y,j,l+\ell)}\in ({\J}\times [0,\infty))^2\}$ defined by
\begin{equation}
\label{transition-density-complete-0}
p^{{\bf \alpha}}(s,t;({\bf 0}_J, k),(y,j, l+d\ell))dy:=\int_0^{t-s}du\;{\alpha_j(s,k+\ell)}g(y,\ell,u)\,dy\,d\ell.
\end{equation}
and
\begin{equation}
\label{transition-density-complete}
\begin{split}
&p^{{\bf \alpha}}(s,t;(x,i,l),(y,j,l+d\ell))dy\,\,\\
&:=\int_0^{t-s} du\frac{x {\rm e}^{-{x^2}/{u}} }{\sqrt{2\pi u^3}}\pare{\int_0^{t-(s+u)}dv\,\alpha_j(s+u+v,l+\ell)\,g(y,\ell,v)}\,dy\,d\ell\\
&\hspace{0,4 cm}\,+\,{\bf 1}_{i=j}\frac{1}{\sqrt{2\pi (t-s)}}\croc{\exp\pare{-\frac{(y-x)^2}{2(t-s)}} - \exp\pare{-\frac{(y+x)^2}{2(t-s)}} }dy\delta_{\{0\}}(d\ell).
\end{split}
\end{equation}
\end{Proposition}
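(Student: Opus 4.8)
The plan is to reduce everything to Brownian computations via the reflected–process representation and then to insert the spinning measure through a last–exit decomposition, exactly in the spirit of \cite{Etore-Martinez}. Since $\sigma_i\equiv 1$ and $b_i\equiv 0$, Proposition~\ref{pr: existence Brow} gives that under the (unique) law started from $(x,i,l)$ the radial coordinate satisfies $S_u = x + (W_u-W_s) + (\ell_u-l)$ on $[s,t]$ with $\int \mathbf{1}_{\{S>0\}}\,d\ell = 0$; by Skorokhod's lemma $(S_u,\ell_u-l)_{u\ge s}$ is a reflecting Brownian motion started at $x$ together with its local time at $0$. Consequently the joint law of $(S_t,\ell_t-l,g_t)$, where $g_t := \sup\{u\le t : S_u = 0\}$ is the last visit to the junction before $t$, is precisely the one recalled in \eqref{tridens-RY}, which is the origin of the kernel $g(y,\ell,u)$. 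The only genuinely new ingredient is the conditional law of the branch label $i(t)$.

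\textbf{Away from the junction ($x\neq {\bf 0}_J$).} I would split according to the first hitting time $\tau_0 := \inf\{u\ge s : S_u = 0\}$. On $\{\tau_0 > t\}$ the radial coordinate never returns to $0$, so no local time is created (hence $\ell_t=l$ and the factor $\delta_{\{0\}}(d\ell)$), the path stays on its initial branch (hence $\mathbf{1}_{i=j}$), and $(S_u)_{s\le u\le t}$ is a Brownian motion absorbed at $0$; the method of images yields exactly the second line of \eqref{transition-density-complete}. On $\{\tau_0 \le t\}$ I would apply the strong Markov property at $\tau_0$: the density of $\tau_0-s=u$ is the first–passage density $\tfrac{x}{\sqrt{2\pi u^3}}\,e^{-x^2/(2u)}$, and after $\tau_0$ the process restarts from the junction at time $s+u$ with local time still equal to $l$. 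This reduces the first line of \eqref{transition-density-complete} to the junction formula \eqref{transition-density-complete-0} applied from time $s+u$, which accounts for the nested integral and the argument $s+u+v$ of $\alpha_j$.

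\textbf{The junction case and the branch-selection rule.} Starting from ${\bf 0}_J$ at time $s$, I would condition on the last exit time and write $g_t-s=u$. By \eqref{tridens-RY} the joint density of $(S_t,\ell_t-l,g_t-s)$ is $g(y,\ell,u)$, and since $S$ has no zero on $(g_t,t]$ the whole local time is accumulated by time $g_t$, so $\ell_{g_t}=\ell_t=l+\ell$. The label $i(t)$ is the branch carried by the single excursion straddling $t$, which begins at $g_t$. The heuristic of Section~\ref{subsection-generator} together with the frozen-coefficient theorem (Theorem~\ref{th: exi Freidlin}) indicate that this excursion is sent onto branch $j$ with probability $\alpha_j$ evaluated at the instant it starts, i.e. $\alpha_j(g_t,\ell_{g_t})=\alpha_j(s+u,l+\ell)$, and that this choice is conditionally independent of $(S_t,\ell_t,g_t)$. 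Integrating $\alpha_j(s+u,l+\ell)\,g(y,\ell,u)$ over $u\in(0,t-s)$ then produces \eqref{transition-density-complete-0}.

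\textbf{Main obstacle and conclusion.} The delicate point is the branch-selection rule: when $\alpha$ depends on time and on the running local time, one must prove that, conditionally on the radial path and on $g_t$, the branch of the straddling excursion is drawn from $\alpha(g_t,\ell_{g_t})$, independently of the radial data. I would establish this by excursion theory applied to the frozen-coefficient process of Theorem~\ref{th: exi Freidlin}: over a short final excursion the coefficients evaluated at $(\cdot,\ell_{g_t})$ are essentially constant, so the label follows the frozen spinning measure, and the strong Markov property together with the concatenation machinery of Section~\ref{sec: concatenation} propagates the time/local-time dependence. Since this step is the only one that is not a routine Brownian computation, I would finally \emph{certify} the candidate kernel $p^\alpha$ by uniqueness rather than rely solely on the heuristic: $p^\alpha$ defines a consistent family of one-time marginals, and confronting $\int g\,p^\alpha$ with the PDE solution of Theorem~\ref{th : exis para with l bord infini} for every $g$ in the determining class $\mathcal G$ of Lemma~\ref{lm : class determining } (using the non-stickiness of Proposition~\ref{prop:non-stickness} to control the contribution of the set $\{x=0\}$) shows that the associated law solves $\big(\mathcal{S}_{pi}-\mathcal{M}_{ar}\big)$; the well-posedness in Theorem~\ref{th: exist Spider } then forces it to coincide with $\P^{x_\ast,i_\ast,l_\ast}$, which proves \eqref{transition-density-complete-0}--\eqref{transition-density-complete}.
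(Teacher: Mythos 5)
Your concluding certification step—checking that $\int g\,p^{\alpha}$ yields the (unique) solution of the backward system \eqref{eq : pde-spider} of Theorem \ref{th : exis para with l bord infini} and then invoking the well-posedness of $\big(\mathcal{S}_{pi}-\mathcal{M}_{ar}\big)$—is exactly the argument the paper gives, and like the paper you stop short of carrying out the actual verification computation. The preceding first-passage/last-exit derivation is the same heuristic the paper attributes to \cite{Etore-Martinez}, so the proposal matches the paper's approach and its level of rigor.
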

\begin{proof} (Main idea of the proof)
We will only give the main idea of the proof.
It suffices to check that
the family 
$$\{p^{{\bf \alpha}}(s,t;(x,i,l),(y,j, l+d\ell)~|~s<t,\, \pare{(x,i,l),(y,j,l+\ell)}\in ({\J}\times [0,\infty))^2\}$$ defined by \eqref{transition-density-complete-0} and \eqref{transition-density-complete} yields the fundamental solution of
\begin{eqnarray}\label{eq : pde-spider}
\begin{cases}\partial_tu_i(t,x,l)+\ds \frac{1}{2}\partial_x^2u_i(t,x,l)=0,~~(t,x,l)\in (0,T)\times (0,+\infty)^2,\\
\partial_lu(t,0,l)+\displaystyle \sum_{i=1}^I \alpha_i(t,l)\partial_xu_i(t,0,l)=0,~~(t,l)\in(0,\mathcal{T})\times(0,+\infty)\\
\forall (i,j)\in[I]^2,~~u_i(t,0,l)=u_j(t,0,l)=u(t,0,l),~~(t,l)\in[0,T]\times[0,+\infty)^2,\\
\forall i\in[I],~~ u_i(T,x,l)=g_i(x,l),~~(x,l)\in[0,+\infty)^2,
\end{cases}
\end{eqnarray}
where $g$ is arbitrary in $\mathcal{C}^{\infty}_b\big(\mathcal{D}_{\infty}\big)$ satisfies the compatibility condition:
\begin{align*}
&\partial_lg(0,l)+\sum_{i=1}^I\alpha_i(T,l)\partial_xg_i(0,l)=0,~~l\in[0,+\infty).
\end{align*}
Namely, we check that the unique solution of the Cauchy problem \ref{eq : pde-spider} admits the following representation
\[
u_i(t,x,l) = \sum_{j=1}^I\int_{\R^+\times \R^+}g_{j}(y,\ell)\,p^{{\bf \alpha}}(t,T;(x,i,l),(y,j,l+d\ell)\,dy,\;\;\;i\in \{1,\dots, I\}.
\]
Since this fact checking computation is tremendously long and tedious, we have decided to leave it aside in the present paper.
\end{proof}

\medskip
\subsection{The case $I=2$ - Existence and uniqueness for the solution of a skew SDE at zero whose characteristic coefficients and skewness parameter depend on the local time of the unknown process}

This subsection is devoted to the study of the special case $I=2$. 

We are interested in the analysis of the following stochastic differential equation
\begin{equation}
\label{eq:sde-I-equals-2}
y_0 = y,\hspace{0,4 cm}dy(s) = \tilde{\sigma}(s,y(s),\ell_s^0(y))dW_s + \tilde{b}(s,y(s),\ell_s^0(y))ds + \beta(s,\ell^0_s(y))d\ell_s^0(y)
\end{equation}
where
\begin{equation}
\label{eq:beta}
\beta(s,l) = \frac{\alpha(s,l)\tilde{\sigma}(s,0+,l) - (1-\alpha(s,l)) \tilde{\sigma}(s,0-,l)}{\alpha(s,l)\tilde{\sigma}(s,0+,l) + (1-\alpha(s,l)) \tilde{\sigma}(s,0-,l)}.
\end{equation}

We assume that analogous assumptions as $\textbf{Assumption } (\mathcal{H})$ hold for the coefficients $\tilde{\sigma}, \tilde{b}$ and $\beta$.

We may deduce the following result from Theorem \ref{th: exist Spider }:
\begin{Proposition}
    Under the above assumptions, if there exists a weak solution to the stochastic differential equation \eqref{eq:sde-I-equals-2}, then the probability law $${\mathcal L}\pare{(|y(t)|, \frac{\sgn(y(t)) + 3}{2}, \ell_t^0(y)}_{t\geq 0}$$ is the unique solution of the spider martingale problem \emph{$\big(\mathcal{S}_{pi}-\mathcal{M}_{ar}\big)$} with $I=2$ starting from $X(0) = (y, (\sgn(y) + 3)/{2},0)$ and data given by
    \begin{align}
    &\label{eq:chgt-variable-sigma}\sigma_{2}(s,x,l) = \tilde{\sigma}(s,x,l){\bf 1}_{x\geq 0},\;\;\;\sigma_{1}(s,x,l) = \tilde{\sigma}(s,-x,l){\bf 1}_{x\geq 0},\\
    &\label{eq:chgt-variable-b}b_{2}(s,x,l) = \tilde{b}(s,x,l){\bf 1}_{x\geq 0},\;\;\;b_{1}(s,x,l) = -\tilde{b}(s,-x,l){\bf 1}_{x\geq 0},\\
    &   \label{eq:changt-variable-coeffs-alpha1}\alpha_1(s,l) = \frac{(1-\alpha(s,l)) \tilde{\sigma}(s,0-,l)}{\alpha(s,l)\tilde{\sigma}(s,0+,l) + (1-\alpha(s,l)) \tilde{\sigma}(s,0-,l)},\\
    &   \label{eq:changt-variable-coeffs-alpha2} \alpha_2(s,l) = \frac{\alpha(s,l)\tilde{\sigma}(s,0+,l)}{\alpha(s,l)\tilde{\sigma}(s,0+,l) + (1-\alpha(s,l)) \tilde{\sigma}(s,0-,l)} = 1 - \alpha_1(s,l).
 \end{align}
    Conversely, assume that $${\mathcal L}\pare{(x(t), i(t), \ell_t^0}_{t\geq 0})$$ denotes the unique solution of the spider martingale problem \emph{$\big(\mathcal{S}_{pi}-\mathcal{M}_{ar}\big)$} with $I=2$ for the above data.
    Then, $((2i(t)-3)x(t))_{t\geq 0}$ gives a weak solution of \eqref{eq:sde-I-equals-2}.
\end{Proposition}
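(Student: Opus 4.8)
The starting point is the bijection between $\R$ and the two-ray spider $\mathcal{J}$ (case $I=2$) given by $(x,i)\mapsto (2i-3)x$, with inverse $y\mapsto (|y|,(\sgn(y)+3)/2)$; under it branch $2$ plays the role of $\R^+$ and branch $1$ the role of $\R^-$. The plan is to prove the two implications as mirror images of one another through the It\^o--Tanaka formula: the converse direction is obtained by \emph{pulling back} smooth test functions on $\R$ to admissible test functions on the web and invoking the spider martingale property, while the forward direction is obtained by \emph{pushing forward} a solution of \eqref{eq:sde-I-equals-2} and checking directly the three conditions defining $\big(\mathcal{S}_{pi}-\mathcal{M}_{ar}\big)$.

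For the converse, assume $(x(\cdot),i(\cdot),\ell^0_\cdot)$ is the unique spider solution and set $y(t):=(2i(t)-3)x(t)$. Given $\phi\in \mathcal{C}^2_b(\R)$ I would pull it back by $f_2(t,x):=\phi(x)$, $f_1(t,x):=\phi(-x)$; the continuity $f_1(t,0)=f_2(t,0)=\phi(0)$ shows $f\in\mathcal{C}^{1,2}_b(\mathcal{J}_T)$. A short computation using \eqref{eq:chgt-variable-sigma}--\eqref{eq:chgt-variable-b} shows that on either branch the generator term $\tfrac12\sigma_i^2\partial_{xx}f_i+b_i\partial_x f_i$ collapses to $\tfrac12\tilde\sigma^2(u,y,l)\phi''(y)+\tilde b(u,y,l)\phi'(y)$, while the Kirchhoff term becomes $\sum_j\alpha_j\partial_x f_j(u,0)=(\alpha_2-\alpha_1)\phi'(0)=\beta(u,l)\phi'(0)$, since \eqref{eq:changt-variable-coeffs-alpha1}--\eqref{eq:changt-variable-coeffs-alpha2} give $\alpha_2-\alpha_1=\beta$. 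Feeding this into condition (iii) of Theorem \ref{th: exist Spider } yields the martingale problem associated to \eqref{eq:sde-I-equals-2}; choosing $\phi(y)=y$ and $\phi(y)=y^2$ (after localization) and applying the representation theorem for continuous local martingales exactly as in Proposition \ref{pr: existence Brow-concatenation} produces a Brownian motion $W$ with $dy=\tilde\sigma\,dW+\tilde b\,du+\beta\,d\ell^0$. Finally Tanaka's formula applied to $|y|=x$ identifies the spider local time $\ell^0$ with the symmetric local time of $y$ at $0$, using that $\ell^0$ increases only on $\{x=0\}=\{y=0\}$ and $\sgn(0)=0$, so that the driving increasing process is indeed $\ell^0_\cdot(y)$.

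For the forward direction, let $y$ be a weak solution of \eqref{eq:sde-I-equals-2} and put $x=|y|$, $i=(\sgn(y)+3)/2$, keeping $\ell^0(y)$. Tanaka's formula gives $dx=\sgn(y)\big(\tilde\sigma\,dW+\tilde b\,du\big)+d\ell^0(y)$, which by \eqref{eq:chgt-variable-sigma}--\eqref{eq:chgt-variable-b} and L\'evy's characterization is the reflected representation $dx=\sigma_i\,dW''+b_i\,du+d\ell$ of Proposition \ref{pr: existence Brow} with $\ell=\ell^0(y)$; this settles conditions (i) and (ii), the increase of $\ell^0(y)$ only on $\{y=0\}$ giving (ii). For condition (iii) I would fix $f\in\mathcal{C}^{1,2}_b(\mathcal{J}_T)$, form the continuous function $F(t,y):=f_{(\sgn(y)+3)/2}(t,|y|)$, and apply the generalized It\^o--Tanaka formula to $F(t,y(t))$. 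Away from $0$ the absolutely continuous part reproduces the drift and diffusion terms of $V^f$; the singular part is the sum of the kink contribution $\tfrac12\big(\partial_x f_2(u,0)+\partial_x f_1(u,0)\big)\,dL^0(y)$ and of $\partial_y F\cdot\beta\,d\ell^0(y)$, and the defining relation \eqref{eq:beta} is precisely calibrated so that these combine into $\sum_j\alpha_j\partial_x f_j(u,0)\,d\ell$. The non-stickiness of Proposition \ref{prop:non-stickness} ensures $\{y=0\}$ is Lebesgue-null so the indicator insertions are harmless, and uniqueness of the pushed-forward law — hence the uniqueness statement — then follows from the well-posedness in Theorem \ref{th: exist Spider }.

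The hard part will be the local-time bookkeeping in the forward direction: one must integrate the \emph{discontinuous} derivative $\partial_y F$ against the symmetric local time $d\ell^0(y)$, which forces the use of the symmetric value $\tfrac12\big(\partial_y F(0+)+\partial_y F(0-)\big)$ and a careful tracking of the relation between the symmetric, one-sided, and reflected local times. The whole point of the algebraic definitions \eqref{eq:beta} and \eqref{eq:changt-variable-coeffs-alpha1}--\eqref{eq:changt-variable-coeffs-alpha2} is to make the identity $\tfrac12\big[(1+\beta)\partial_x f_2(u,0)+(1-\beta)\partial_x f_1(u,0)\big]=\alpha_2\partial_x f_2(u,0)+\alpha_1\partial_x f_1(u,0)$ hold exactly, and verifying this matching (equivalently $\alpha_2=(1+\beta)/2$, $\alpha_1=(1-\beta)/2$) is the crux on which both directions hinge.
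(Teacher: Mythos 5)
Your proposal is correct, and on the forward direction it is essentially the paper's own argument: the paper splits $y$ into $(y)^+$ and $(y)^-$ and applies It\^o--Tanaka to $f_2((y)^+)$ and $f_1((y)^-)$ separately, with the one-sided local times $\check\ell^0=\tfrac12\int(1+\beta)\,d\ell^0$ and $\hat\ell^0=\tfrac12\int(1-\beta)\,d\ell^0$, whereas you apply the generalized It\^o--Tanaka formula once to the composite pullback $F(t,y)=f_{(\sgn(y)+3)/2}(t,|y|)$; the bookkeeping is different but the content is identical, and you have correctly isolated the crux, namely $\alpha_1=(1-\beta)/2$, $\alpha_2=(1+\beta)/2$, which is exactly the identity the paper's computation after \eqref{eq:almost-pb-mart} reduces to. Where you genuinely diverge is the converse. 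The paper introduces the transmission class ${\mathcal{C}^{1,2,1}_{\rm trans}}$, formulates an intermediate martingale problem $\big({\mathcal{E}_{ds}}-\mathcal{M}_{ar}\big)$ on $\R$, shows the pushed-forward law solves it by pulling back such test functions and cancelling the local-time terms via \eqref{trans-derivee-l-f}--\eqref{transmission} -- and then explicitly declines, ``for conciseness,'' to prove that this martingale problem is equivalent to weak solvability of \eqref{eq:sde-I-equals-2}. You instead test with $\phi(y)=y$ and $\phi(y)=y^2$ (after localization), identify $\langle y\rangle_t=\int_0^t\tilde\sigma^2\,du$ exactly as in Proposition \ref{pr: existence Brow-concatenation}, invoke the martingale representation theorem to produce the driving Brownian motion, and close the loop by matching the singular finite-variation parts of $d|y|$ (Tanaka) against $dx$ (Proposition \ref{pr: existence Brow}) to get $\ell=\ell^0(y)$. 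Your route actually delivers more than the paper's sketch, since it constructs the weak solution of the SDE directly rather than stopping at the intermediate martingale problem; the paper's route buys generality (arbitrary transmission-class test functions) that it then does not fully exploit. No gap.
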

\begin{proof}
{\bf Part I}

Assume that there exists a weak solution to the stochastic differential equation \eqref{eq:sde-I-equals-2}. Set $i(t):=({\sgn(y(t)) + 3})/{2}\in \{1,2\}$. 
Fix $\varepsilon>0$. 

 Using the same ideas as in the proof of Proposition \ref{prop:non-stickness}, it is not difficult to prove that
 \begin{equation}
 \label{eq:non-stickness-y}
 \lim_{\e\searrow 0+}\E\pare{\int_0^t {\bf 1}_{|y(s)|\leq \e}ds} = 0.
 \end{equation}
 In particular,
 \begin{equation}
 \label{eq:non-stickness-y-2}
 \P\pare{\lambda\pare{\{s\geq 0~|~y(s) = 0\}} = 0} = 1
 \end{equation}
(with $\lambda(ds)$ standing for the Lebesgue measure).

By the It\^o-Tanaka formula (with $\sgn(0) = 0$) and from our definitions of $\sigma_{1}, \sigma_2, b_1, b_2$, we have
\begin{align*}
(y(t))^- &= (y)^- - \int_0^t \tilde{\sigma}(s, y(s), \ell_s^0(y)){\bf 1}_{y(s)< 0}dW_s - \int_0^t \tilde{b}(s, y(s), \ell_s^0(y)){\bf 1}_{y(s)< 0}ds + \hat{\ell}_t^0(y)\\
&=(y)^- - \int_0^t {\sigma}_{1}(s, |y(s)|, \ell_s^0(y)){\bf 1}_{y(s)< 0}dW_s + \int_0^t {b}_{1}(s, |y(s)|, \ell_s^0(y)){\bf 1}_{y(s)< 0}ds + \hat{\ell}_t^0(y)
\end{align*}
where 
\begin{equation}
\label{eq:hat-l}
\hat{\ell}_t^0(y) = \frac{1}{2}\int_0^t\pare{1 - \beta(s,\ell_s^0(y))}d\ell_s^0(y).
\end{equation}
And analogously,
\begin{align*}
(y(t))^+ &= (y)^+ + \int_0^t \tilde{\sigma}(s, y(s), \ell_s^0(y)){\bf 1}_{y(s)> 0}dW_s + \int_0^t \tilde{b}(s, y(s), \ell_s^0(y)){\bf 1}_{y(s)> 0}ds + \check{\ell}_t^0(y)\\
&=(y)^+ + \int_0^t {\sigma}_{2}(s, |y(s)|, \ell_s^0(y)){\bf 1}_{y(s)> 0}dW_s + \int_0^t {b}_{2}(s, |y(s)|, \ell_s^0(y)){\bf 1}_{y(s)> 0}ds + \check{\ell}_t^0(y).
\end{align*}
where 
\begin{equation}
\label{eq:check-l}
\check{\ell}_t^0(y) = \frac{1}{2}\int_0^t\pare{1 + \beta(s,\ell_s^0(y))}d\ell_s^0(y).
\end{equation}
Let $f\in\mathcal{C}^{2}_b(\mathcal{J})$. 
We may apply a generalized It\^o's formula to $((y(t))^-)_{t\geq 0}$ and $f_1$ which is twice differentiable at zero. Observe that with our definition of $i(t)$, we have $\{t: y(t)< 0\} = \{t: i(t)=1\}$. By using \eqref{eq:non-stickness-y-2}, we have
\begin{align*}
&f_1((y(t))^-) - f_1((y)^-) = -\int_{0}^{t}\partial_xf_{1}((y(s))^-){\sigma}_{1}(s,|y(s)|,\ell_s^0(y)){\bf 1}_{y(s)\leq 0}dW_s\\
&\;+\int_{0}^{t}\pare{\partial_xf_{1}((y(s))^-){b}_{1}(s,|y(s)|,\ell_s^0(y))+ \frac{1}{2}\partial^2_{xx}f_{1}((y(s))^-){\sigma}_{1}^2(s,|y(s)|,\ell_s^0(y))}{\bf 1}_{y(s)\leq 0}ds\\
&\;+\frac{1}{2}\partial_xf_{1}(0+)\hat{\ell}_t^0(y) + \int_{0}^{t}\partial_xf_{1}(0+){\ell}_t^0((y)^-)\\
&= \int_{0}^{t}\partial_xf_{i(s)}(|y(s)|){\sigma}_{i(s)}(s,|y(s)|,\ell_s^0(y))(-{\bf 1}_{y(s)\leq 0})dW_s\\
&\;+\int_{0}^{t}\pare{\partial_xf_{i(s)}(|y(s)|){b}_{i(s)}(s,|y(s)|,\ell_s^0(y))+ \frac{1}{2}\partial^2_{xx}f_{i(s)}(|y(s)|){\sigma}_{1}^2(s,|y(s)|,\ell_s^0(y))}{\bf 1}_{i(s)=1}ds\\
&\;+\frac{1}{2}\partial_xf_{1}(0+)d\hat{\ell}_t^0(y).
\end{align*}
In the same manner, we may apply a generalized It\^o's formula to $((y(t))^+)_{t\geq 0}$ and $f_2$. Using $\{t: y(t)> 0\} = \{t: i(t)=2\}$ and \eqref{eq:non-stickness-y-2}, we have
\begin{align*}
&f_2((y(t))^+) - f_2((y)^+) = \int_{0}^{t}\partial_xf_{2}((y(s))^+){\sigma}_{2}(s,|y(s)|,\ell_s^0(y)){\bf 1}_{y(s)\geq 0}dW_s\\
&\;+\int_{0}^{t}\pare{\partial_xf_{2}((y(s))^+){b}_{2}(s,|y(s)|,\ell_s^0(y))+ \frac{1}{2}\partial^2_{xx}f_{2}((y(s))^+){\sigma}_{2}^2(s,|y(s)|,\ell_s^0(y))}{\bf 1}_{y(s)\geq 0}ds\\
&\;+\frac{1}{2}\partial_xf_{2}(0+)\check{\ell}_t^0(y) + \partial_xf_{2}(0+){\ell}_t^0((y)^+)\\
&=\int_{0}^{t}\partial_xf_{i(s)}(|y(s)|){\sigma}_{i(s)}(s,|y(s)|,\ell_s^0(y)){\bf 1}_{y(s)\geq 0}dW_s\\
&\;+\int_{0}^{t}\pare{\partial_xf_{i(s)}(|y(s)|){b}_{i(s)}(s,|y(s)|,\ell_s^0(y))+ \frac{1}{2}\partial^2_{xx}f_{i(s)}(|y(s)|){\sigma}_{i(s)}^2(s,|y(s)|,\ell_s^0(y))}{\bf 1}_{i(s)= 2}ds\\
&\;+\partial_xf_{2}(0+)\check{\ell}_t^0(y)).
\end{align*}
Adding the previous equalities and remembering the fact that $f_1(0) = f_2(0)$, we get (using once again \eqref{eq:non-stickness-y-2}):
\begin{align}
\label{eq:almost-pb-mart}
&f_{i(t)}(|y(t)|) - f_{i(0)}(|y|) = f_1((y(t))^-) - f_1((y)^-) + f_2((y(t))^+) - f_2((y)^+)\nonumber\\
&=\int_{0}^{t}\partial_xf_{i(s)}(|y(s)|){\sigma}_{i(s)}(s,|y(s)|,\ell_s^0(y))\sgn(y(s))dW_s\nonumber\\
&\;+\int_{0}^{t}\pare{\partial_xf_{i(s)}(|y(s)|){b}_{i(s)}(s,|y(s)|,\ell_s^0(y))+ \frac{1}{2}\partial^2_{xx}f_{i(s)}(|y(s)|){\sigma}_{i(s)}^2(s,|y(s)|,\ell_s^0(y))}ds\nonumber\\
&\;+\partial_xf_{1}(0+)\hat{\ell}_t^0(y) + \int_{0}^{t}\partial_xf_{2}(0+)d\check{\ell}_t^0(y).
\end{align}
Replacing with the explicit expression of $\beta$ \eqref{eq:beta} and from the definitions of $\alpha_1$ \eqref{eq:changt-variable-coeffs-alpha1} and and $\alpha_2$ \eqref{eq:changt-variable-coeffs-alpha2}  yields
\begin{align*}
&\partial_xf_{1}(0+)\hat{\ell}_t^0(y) + \partial_xf_{2}(0+)d\check{\ell}_t^0(y)\\
&=\pare{\int_0^t\frac{\pare{1-\alpha(s,\ell_s^0(y))}{\sigma}_1(s,0,\ell_s^0(y))}{\alpha(s,\ell_s^0(y)){\sigma}_2(s,0,\ell_s^0(y)) + (1-\alpha(s,\ell_s^0(y))) {\sigma}_1(s,0,\ell_s^0(y))}d\ell_s^0(y)}\partial_xf_{1}(0+)\\
&\;\;+\pare{\int_0^t\frac{\pare{\alpha(s,\ell_s^0(y))}{\sigma}_2(s,0,\ell_s^0(y))}{\alpha(s,\ell_s^0(y)){\sigma}_2(s,0,\ell_s^0(y)) + (1-\alpha(s,\ell_s^0(y))) {\sigma}_1(s,0,\ell_s^0(y))}d\ell_s^0(y)}\partial_xf_{2}(0+)\\
&=\partial_xf_{1}(0+)\int_0^t\alpha_1(s,\ell_s^0(y))d{\ell}_s^0(y) + \partial_xf_{2}(0+)\int_{0}^{t}\alpha_2(s,\ell_s^0(y))d{\ell}_s^0(y).
\end{align*}
Substituting this expression in \eqref{eq:almost-pb-mart} ensures that
\begin{align}
\label{eq:pb-mart-homo}
&f_{i(t)}(|y(t)|) - f_{i(0)}(|y|)\nonumber\\
&=\int_{0}^{t}\partial_xf_{i(s)}(|y(s)|){\sigma}_{i(s)}(s,|y(s)|,\ell^0_s(y))\sgn(y(s))dW_s\nonumber\\
&\;+\int_{0}^{t}\pare{\partial_xf_{i(s)}(|y(s)|){b}_{i(s)}(s,|y(s)|,\ell^0_s(y))+ \frac{1}{2}\partial^2_{xx}f_{i(s)}(|y(s)|){\sigma}_{i(s)}^2(s,|y(s)|,\ell^0_s(y))}ds\nonumber\\
&\;+\partial_xf_{1}(0+)\int_0^t\alpha_1(s,\ell^0_s(y))d{\ell}^0_s(y) + \partial_xf_{2}(0+)\int_{0}^{t}\alpha_2(s,\ell_s^0(y))d{\ell}^0_s(y).
\end{align}
This formula generalizes standardly to time dependent $f\in\mathcal{C}^{1,2}_b(\mathcal{J})$ (see for example the methodology of proof for the multidimensional It\^o formula in \cite{Revuz-Yor} Chapter IV). Finally, we have proved that for any $f\in\mathcal{C}^{1,2}_b(\mathcal{J}_T)$,
\begin{align*}
&\Bigg(~f_{i(t)}(t,x(t))- f_{\frac{\sgn(y) + 3}{2}}(0,|y|)\displaystyle-\int_{0}^{t}\pare{\partial_tf_{i(u)}(u,x(u))+\displaystyle\frac{1}{2}\sigma_{i(u)}^2(u,x(u),\ell_u)\partial_{xx}^2f_{i(u)}(u,x(u))}du\\
&\displaystyle+\int_{0}^{s}\pare{b_{i(u)}(t,x(u),\ell_u)\partial_xf_{i(u)}(u,x(u)}du\\
&~\displaystyle +\int_0^t\partial_xf_{1}(u,0)\alpha_1(u,\ell_u)d{\ell}_u + \int_{0}^{t}\partial_xf_{2}(u,0)\alpha_2(u,\ell_u)d{\ell}_u\Bigg)_{0\leq t\leq T},
\end{align*} 
is a $(\Psi_t)_{0 \leq t \leq T}$ martingale under the probability measure ${\mathcal L}\pare{(|y(t)|, \frac{\sgn(y(t)) + 3}{2}, \ell_t^0(y)}_{0\leq t\leq T}$ given on the canonical space. This proves the first assertion of the proposition.
\medskip
\medskip

{\bf Part II}

The converse is more involved and for the sake of conciseness we will only give the outline of the proof, leaving details aside. Before getting started, let us introduce the set ${\mathcal{C}^{1,2,1}_{\rm trans}}$ of continuous functions $f:~[0,T]\times \R\times \R^+\rightarrow \R$ such that $f\in {\mathcal C}_b^{1,2,1}([0,T]\times (\R\setminus \{0\})\times \R^+)$ and $f$ satisfies the following transmission conditions
\begin{align}
\forall (t,l)\in [0,T]\times \R^+:&\nonumber\\
&\label{trans-derivee-l-f}\partial_lf(t,0-,l) = \partial_lf(t,0+,l) = \partial_lf(t,0,l),\\
&\label{transmission}\partial_lf(t,0,l) - \frac{1-\beta(t,l)}{2}\partial_xf(t,0-,l) + \frac{1+\beta(t,l)}{2}\partial_xf(t,0+,l) = 0.
\end{align}
We form the product space 
\begin{align*}
\W =\mathcal{C}([0,T],\R)\times \mathcal{L}[0,T] 
\end{align*}
 considered as a measurable Polish space equipped with its Borel $\sigma$-algebra $\mathbb{B}(\Phi)$. 

The canonical process $\tilde{Y}$ on $\pare{\W, \mathbb{B}(\W)}$ is defined as the family of measurable mappings $\tilde{Y} = \{\tilde{Y}(t)~|~t\in [0,T]\}$ that are defined for each $t\in [0,T]$ as
\[\tilde{Y}(t):=\begin{array}{cll}
\W&\to\;\;\R\times \R^+\\
\big(y,l\big)&\mapsto\;\;(y(t),l(t)\big). \end{array}
\]
In the following, $(\Theta_t:=\sigma(\tilde{Y}(s), 0\leq s \leq t))_{0\leq t\leq T}$ stands for the canonical filtration associated to $\pare{\W, \mathbb{B}(\W)}$.

The crucial point here is that we may attach to the stochastic differential equation \eqref{eq:sde-I-equals-2} the following martingale problem
\medskip
\begin{center}
\emph{$\big({\mathcal{E}_{ds}}-\mathcal{M}_{ar}\big)$}
\end{center}
\medskip
{\it For $y\in \R$, find a probability $\P^{y}$ defined on the measurable space $(\W,\mathbb{B}(\W))$ such that:\\
-(i) $\tilde{Y}(0)=(y,0)$, $\P^{y}$-a.s.\\ 
-(ii) For each $s\in [0,T]$: 
\begin{eqnarray*}
\displaystyle\displaystyle\int_{0}^{s}\mathbf{1}_{\{y(u)>0\}}dl(u)=0,~~\P^{y}-\text{ a.s.}
\end{eqnarray*}
-(iii) 
For $y\in \R$, find a probability measure $\P^{y,0}$ on the measurable space $(\W,\mathbb{B}(\W))$ such that:\\
For any $f\in {\mathcal{C}^{1,2,1}_{\rm trans}}$, 
\begin{align*}
&\Bigg(~f(s,y(s),l(s))- f(0,y,0)\displaystyle-\int_{0}^{s}\pare{\partial_tf(u,y(u),l(u))+\displaystyle\frac{1}{2}\tilde{\sigma}^2(u,y(u),l(u))\partial_{xx}^2f(u,y(u),l(u))}du\\
&\displaystyle-\int_{0}^{s}\pare{\tilde{b}(u,y(u),l(u))\partial_xf(u,y(u)),l(u)}du\Bigg)_{0\leq s\leq T}
\end{align*} 
is a $(\Theta_s)_{0 \leq s \leq T}$ martingale under the probability measure $\P^{y}$.}

For conciseness, we will not provide a proof of this natural fact here. For a formal proof we invoke similar arguments as those exposed in this article. Since we are just looking at the natural correspondence between a possible solution of equation \eqref{eq:sde-I-equals-2} and our two branch spider martingale, we are not interested in uniqueness issues. Indeed, once the relationship between a weak solution of the stochastic differential equation \eqref{eq:sde-I-equals-2} and the martingale problem \emph{$\big({\mathcal{E}_{ds}}-\mathcal{M}_{ar}\big)$} is formally established, the uniqueness for the possible solutions of the martingale problem \emph{$\big({\mathcal{E}_{ds}}-\mathcal{M}_{ar}\big)$} derives directly from the well-posedeness for the martingale problem \emph{$\big({\mathcal{S}_{pi}}-\mathcal{M}_{ar}\big)$} and the first part of this proposition.
\medskip

Now let $${\mathcal L}\pare{(x(t), i(t), \ell_t^0}_{0\leq t\leq T})$$ denote the unique solution of the spider martingale problem \emph{$\big(\mathcal{S}_{pi}-\mathcal{M}_{ar}\big)$} with $I=2$ for the data given in \eqref{eq:changt-variable-coeffs-alpha1} and \eqref{eq:changt-variable-coeffs-alpha2} and initial condition 
$x(0) = y, i(0) = (\sgn(y) + 3)/{2}$. 

Our objective turns to show that ${\mathcal L}\pare{((2i(t)-3)x(t), \ell_t^0)_{0\leq t\leq T}}$ on $\pare{\W, \mathbb{B}(\W)}$ gives a solution of the martingale problem \emph{$\big({\mathcal{E}_{ds}}-\mathcal{M}_{ar}\big)$}.

For this purpose, set for $t\in [0,T]$
\begin{equation}
\label{eq:def-f1-f2}
f_1(t,y,l)=f(t,-y,l){\bf 1}_{y\geq 0}\;\;\;\;\;f_2(t,y,l)=f(t,y,l){\bf 1}_{y\geq 0}.
\end{equation}
With these notations (and since $x(.)\geq 0$) observe that for all $s\in [0,T]$
\begin{align*}
f(s, (2i(s)-3)x(s), \ell_s^0) &= f(s, (2i(s)-3)x(s), \ell_s^0){\bf 1}_{i(s)=1} + f(s, (2i(s)-3)x(s), \ell_s^0){\bf 1}_{i(s)=2}\\
&=f(s, -x(s), \ell_s^0){\bf 1}_{i(s)=1} + f(s, x(s), \ell_s^0){\bf 1}_{i(s)=2}\\
&=f_1(s, x(s), \ell_s^0){\bf 1}_{i(s)=1} + f_2(s, x(s), \ell_s^0){\bf 1}_{i(s)=2}\\
&=f_{i(s)}(s, x(s), \ell_s^0).
\end{align*}
Then $f(t,i,z,l) = f_i(t,z,l)$ ($i=1,2$) belongs to $\mathcal{C}^{1,2,1}_b(\mathcal{J}_T\times \R^+)$ and we are in position to apply the martingale property to the spider $(x(t),i(t),\ell_t^0)_{0\leq t\leq T}$. 

We have
\begin{align*}
&f(t, (2i(t)-3)x(t), \ell_t^0) - f(0, (2i(0)-3)x(0), 0) = f_{i(t)}(t,x(t),\ell_t^0)- f_{i(0)}(0,y,0)
\end{align*}
and
\begin{align*}
&\Big (f(t, (2i(t)-3)x(t), \ell_t^0) - f(0, (2i(0)-3)x(0), 0)\\
&\;\;\;\;-\displaystyle\int_{0}^{t}\pare{\partial_tf_{i(u)}(u,x(u),\ell_u^0)+\displaystyle\frac{1}{2}\sigma_{i(u)}^2(t,x(u),\ell_t^0)\partial_{xx}^2f_{i(u)}(u,x(u),\ell_u^0)}du\\
&\;\;\;\;\displaystyle-\int_{0}^{s}\pare{b_{i(u)}(t,x(u),\ell^0_u)\partial_xf_{i(u)}(u,x(u),\ell^0 _u}du\\
&\;\;\;\;\displaystyle -\int_0^t\partial_lf_{i(u)}(u,0,\ell^0_u)d\ell_u^0 -\int_0^t\partial_xf_{1}(u,0,\ell^0_u)\alpha_1(u,\ell^0_u)d{\ell}^0_u - \int_{0}^{t}\partial_xf_{2}(u,0)\alpha_2(u,\ell_u)d{\ell}^0_u\Big )_{0\leq t\leq T}
\end{align*}
is a $(\Psi_s)_{0\leq t\leq T}$ martingale.
Observe from our definitions of $f_1$ and $f_2$ \eqref{eq:def-f1-f2} and the formulas for $\alpha_1$ \eqref{eq:changt-variable-coeffs-alpha1} and $\alpha_2$ \eqref{eq:changt-variable-coeffs-alpha2} that 
\begin{align*}
&\int_0^t\partial_lf_{i(u)}(u,0,\ell^0_u)d\ell_u^0 +\int_0^t\partial_xf_{1}(u,0,\ell^0_u)\alpha_1(u,\ell^0_u)d{\ell}^0_u + \int_{0}^{t}\partial_xf_{2}(u,0)\alpha_2(u,\ell_u)d{\ell}^0_u\\
=&\int_0^t\pare{\partial_lf_{i(u)}(u,0,\ell^0_u) - \frac{1-\beta(u,\ell_u^0)}{2}\partial_xf(u,0,\ell^0_u)d{\ell}^0_u + \frac{1+\beta(u,\ell_u^0)}{2}\partial_xf(u,0)(u,\ell_u)}d{\ell}^0_u.
\end{align*}
Hence, since $f\in {\mathcal{C}^{1,2,1}_{\rm trans}}$, the transmission conditions \eqref{trans-derivee-l-f} and \eqref{transmission} imply
\begin{align*}
&\int_0^t\partial_lf_{i(u)}(u,0,\ell^0_u)d\ell_u^0 +\int_0^t\partial_xf_{1}(u,0,\ell^0_u)\alpha_1(u,\ell^0_u)d{\ell}^0_u + \int_{0}^{t}\partial_xf_{2}(u,0)\alpha_2(u,\ell_u)d{\ell}^0_u = 0.
\end{align*}
Consequently, by setting $\{y(s):=(2i(s)-3)x(s), s\in [0,T]\}$, we may conclude using $x(u) = (2i(u) - 3)y(u) = \sgn(y(u))y(u)$ ($u\in [0,T]$) that the process
\begin{align*}
&\Bigg(~f(s,y(s),l(s))- f(0,y,0)\displaystyle-\int_{0}^{s}\pare{\partial_tf(u,y(u),l(u))+\displaystyle\frac{1}{2}\tilde{\sigma}^2(u,y(u),l(u))\partial_{xx}^2f(u,y(u),l(u))}du\\
&\displaystyle-\int_{0}^{s}\pare{\tilde{b}(u,y(u),l(u))\partial_xf(u,y(u)),l(u)}du\Bigg)_{0\leq s\leq T}
\end{align*} 
is a $(\Theta_s)_{0 \leq s \leq T}$ martingale under the probability measure ${\mathcal L}\pare{((2i(t)-3)x(t), \ell_t^0)_{0\leq t\leq T}}$ over the canonical space $\pare{\W, \mathbb{B}(\W)}$.
\end{proof}
\begin{Corollary}
There exists a unique weak solution to the stochastic differential equation \eqref{eq:sde-I-equals-2}.
\end{Corollary}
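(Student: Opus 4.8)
The plan is to deduce the corollary directly from the two-way correspondence established in the preceding Proposition, combined with the well-posedness of the spider martingale problem $\big(\mathcal{S}_{pi}-\mathcal{M}_{ar}\big)$ guaranteed by Theorem \ref{th: exist Spider }. The starting observation is that the data $(\sigma_i,b_i,\alpha_i)_{i\in\{1,2\}}$ produced from $\tilde\sigma,\tilde b,\alpha$ through \eqref{eq:chgt-variable-sigma}--\eqref{eq:changt-variable-coeffs-alpha2} inherits assumption $(\mathcal{H})$ from the analogous hypotheses imposed on $\tilde\sigma,\tilde b,\beta$, so that Theorem \ref{th: exist Spider } applies verbatim to the associated two-branch spider problem.

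For existence, I would first invoke Theorem \ref{th: exist Spider } to obtain the unique solution $\mathcal{L}\pare{(x(t),i(t),\ell^0_t)_{0\leq t\leq T}}$ of $\big(\mathcal{S}_{pi}-\mathcal{M}_{ar}\big)$ with $I=2$, started from $(y,(\sgn(y)+3)/2,0)$. Part II of the preceding Proposition then says precisely that the process $((2i(t)-3)x(t))_{0\leq t\leq T}$ furnishes a weak solution of \eqref{eq:sde-I-equals-2}, which settles the existence claim with no further work.

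For uniqueness in law, I would take two weak solutions $y^{(1)},y^{(2)}$ of \eqref{eq:sde-I-equals-2} started at $y$ and apply Part I of the Proposition to each: the law of $\pare{|y^{(k)}(t)|,(\sgn(y^{(k)}(t))+3)/2,\ell^0_t(y^{(k)})}_{0\leq t\leq T}$ solves the same spider martingale problem for both $k$, hence the two spider laws on $(\Phi,\B(\Phi))$ coincide by the uniqueness part of Theorem \ref{th: exist Spider }. To transfer this back to the SDE, I would introduce the measurable path functional $\Xi:(x,i,l)\mapsto\pare{(2i-3)x,\,l}$ from $\Phi$ to $\W$ and check the pointwise identity $(2i(t)-3)|y^{(k)}(t)|=\sgn(y^{(k)}(t))|y^{(k)}(t)|=y^{(k)}(t)$, so that $\mathcal{L}\pare{(y^{(k)}(t),\ell^0_t(y^{(k)}))_{0\leq t\leq T}}$ is the pushforward under the single fixed map $\Xi$ of the common spider law; equality of the spider laws then forces equality of the laws of $y^{(1)}$ and $y^{(2)}$.

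The main obstacle I anticipate is the behaviour at the origin $y=0$: there the branch label $(\sgn(y)+3)/2$ is genuinely ambiguous (formally it equals $3/2$), and the maps in Parts I and II are only mutually inverse off the set $\{y=0\}$. The resolution is that this set is negligible for the dynamics --- by the non-stickiness property transported to $y$ via \eqref{eq:non-stickness-y}--\eqref{eq:non-stickness-y-2} the time spent at $0$ has zero Lebesgue measure almost surely, and on the spider network $x=0$ is precisely the junction class $\mathbf{0}$ where the two labels are identified. One must therefore verify carefully that $\Xi$ and the forward map descend to genuinely inverse maps at the level of laws on path space, the junction identification absorbing the ambiguity, which is exactly where the non-stickiness property does the essential work.
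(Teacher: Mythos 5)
Your proposal is correct and is essentially the argument the paper intends: the Corollary is stated as an immediate consequence of the preceding Proposition, with existence read off from Part II via Theorem \ref{th: exist Spider } and uniqueness from Part I plus the pushforward under $(x,i,l)\mapsto((2i-3)x,l)$. Your only over-caution is at the origin: since $(2i(t)-3)x(t)=y(t)$ holds pointwise even when $y(t)=0$ (both sides vanish, and the junction class identifies the two labels there), the recovery map is exactly inverse everywhere and non-stickiness is not actually needed for this final deduction, only inside the proof of the Proposition itself.
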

Note that the solution of \eqref{eq:sde-I-equals-2} gives a new example of a process that belongs to the class $(\Sigma)$ extended to semimartingales: originally, the class $(\Sigma)$ was introduced for submartingale processes whose increasing finite variation process increases only on the zero-set of the submartingale (for an account on the class $(\Sigma)$, we refer the reader to \cite{Eyi-Obiang-al} and \cite{Eyi-Obiang-al-2}).
\medskip

\appendix 


\section{Some technical results}
\subsection{Fatou's Lemma for weakly convergence measures}
We first start this appendix by recalling some interesting results concerning the convergence for sequences of type: $\displaystyle \int f_n d\mu_n$, where the sequences $f_n$ and $\mu_n$ are respectively, measurable functions and measures defined on a metric space $S$, endowed with its Borel sigma algebra $\mathbb{B}(S)$. The following generalization of Fatou's Lemma, and Lebesgue's theorem proved in \cite{Feinberg}, give necessary conditions on the convergence of $\displaystyle \int f_n d\mu_n$ to $\displaystyle \int f d\mu$, where $\mu_n$ converges weakly to a measure $\mu$, and $f_n$ to a measurable function $f$. 

Let $(S,d)$ a metric space and $\mathbb{B}(S)$ its borel sigma algebra. We denote in the sequel by $\mathcal{M}(S)$ the set of all finite measures on the measurable space $(S,\mathbb{B}(S))$.
\begin{Corollary}\label{cr: Fatou}(Fatou's Lemma for weakly convergence measures, see Corollary 4.1 in \cite{Feinberg}.) Let $\mu_n$ be a sequence in $\mathcal{M}(S)$ converging weakly to $\mu$ in $\mathcal{M}(S)$. Let $f_n$ a sequence of lower semiequicontinuous sequence of real-valued function on $S$, namely:
\begin{align*}
&\forall s\in S,~~\forall \varepsilon>0,~~\exists \delta_\varepsilon>0,~~\forall n\in \mathbb{N},~~\forall y\in B_{\delta_\varepsilon}(s):\\
&f_n(y)> f_n(s)-\varepsilon,\,\,\text{where }\;B_{\delta_\varepsilon}(s):=\{y \in S:~d(y,s)\leq \delta_\varepsilon\}.    
\end{align*}
Assume that $f_n$ is asymptotic uniformly integrable with respect to the sequence of measures $\mu_n$, namely:
$$ \lim_{K \to +\infty} \limsup_{n \to +\infty } \int_Sf_n(s)\mathbf{1}_{\{|f_n(s)|\ge K\}}\mu_n(ds)=0,$$ then:
$$\int_S\liminf_{n\to + \infty}f_n(s)\mu(ds)\leq \liminf_{n \to +\infty}\int_Sf_n(s)\mu_n(ds).$$
\end{Corollary}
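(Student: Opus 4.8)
The plan is to combine the classical truncation technique for Fatou-type inequalities with the portmanteau characterization of weak convergence; the only genuinely structural ingredient is that the lower semiequicontinuity hypothesis is exactly the regularity needed to manufacture \emph{lower semicontinuous} minorants that can be tested against the moving sequence $\mu_n$. Write $\underline{f} := \liminf_{n\to\infty} f_n$, so that the goal is $\int_S \underline{f}\, d\mu \le \liminf_n \int_S f_n\, d\mu_n$. First I would record the key observation: for each fixed $m$, the running infimum $g_m := \inf_{n\ge m} f_n$ is lower semicontinuous. Indeed, fixing $s$ and $\varepsilon>0$, the lower semiequicontinuity furnishes a single $\delta_\varepsilon>0$, \emph{uniform in $n$}, with $f_n(y) > f_n(s)-\varepsilon$ for all $n$ and all $y\in B_{\delta_\varepsilon}(s)$; taking the infimum over $n\ge m$ yields $g_m(y)\ge g_m(s)-\varepsilon$ on $B_{\delta_\varepsilon}(s)$. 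Moreover $g_m\uparrow \underline{f}$ pointwise as $m\to\infty$, by the very definition of the lower limit.

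Second, I would treat the auxiliary case in which the family is bounded below, say $f_n\ge -K$ for all $n$ (so that each $g_m\ge -K$ as well). Being lower semicontinuous and bounded below on a metric space, $g_m$ is the pointwise increasing limit of a sequence of bounded continuous functions; for each such function the weak convergence $\mu_n\Rightarrow\mu$ of \emph{finite} measures (so that constants are admissible test functions and the total masses also converge) together with monotone convergence gives the portmanteau lower bound $\liminf_n\int_S g_m\, d\mu_n \ge \int_S g_m\, d\mu$. Since $f_n\ge g_m$ for $n\ge m$, one gets $\liminf_n\int_S f_n\, d\mu_n \ge \liminf_n\int_S g_m\, d\mu_n \ge \int_S g_m\, d\mu$; letting $m\to\infty$ and applying the monotone convergence theorem to $g_m\uparrow\underline{f}$ yields $\liminf_n\int_S f_n\, d\mu_n \ge \int_S \underline{f}\, d\mu$ in the bounded-below case.

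Third, I would remove the lower bound by truncation. Set $f_n^{(K)} := f_n\vee(-K)\ge f_n$; this family is again lower semiequicontinuous (taking the maximum with a constant preserves the defining inequality), is bounded below by $-K$, and satisfies $\liminf_n f_n^{(K)} = \underline{f}\vee(-K)\ge \underline{f}$ because $t\mapsto t\vee(-K)$ is continuous and nondecreasing. The truncation error is supported on the lower tail and obeys
\[
0 \;\le\; \int_S \bigl(f_n^{(K)}-f_n\bigr)\, d\mu_n \;=\; \int_{\{f_n<-K\}} \bigl(|f_n|-K\bigr)\, d\mu_n \;\le\; \int_S |f_n|\,\mathbf{1}_{\{|f_n|\ge K\}}\, d\mu_n .
\]
Applying the bounded-below case to $\{f_n^{(K)}\}$ and using $\liminf_n(a_n-b_n)\ge\liminf_n a_n-\limsup_n b_n$ gives $\liminf_n\int_S f_n\, d\mu_n \ge \int_S \underline{f}\, d\mu - \varepsilon_K$, where $\varepsilon_K := \limsup_n\int_S |f_n|\,\mathbf{1}_{\{|f_n|\ge K\}}\, d\mu_n$. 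The asymptotic uniform integrability hypothesis is precisely the assertion that the contribution of the region $\{|f_n|\ge K\}$ is negligible as $K\to\infty$, forcing $\varepsilon_K\to 0$; letting $K\to\infty$ finishes the argument.

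The main obstacle is exactly this last truncation step: one must guarantee that the mass carried by $f_n$ on $\{|f_n|\ge K\}$ — and in particular the \emph{negative} tail $\{f_n\le -K\}$, which is the only part that could break a lower bound — is controlled uniformly in $n$ in the $\limsup$ sense, and this is the sole role of the asymptotic uniform integrability assumption (with the signed version stated here matching the negative-tail control needed, provided the positive tail does not conceal a divergent negative tail). The two preceding steps are comparatively soft, but they rest on the non-obvious point that lower semiequicontinuity — the uniform-in-$n$ modulus $\delta_\varepsilon$ — is what makes the running infima $g_m$ lower semicontinuous, so that a \emph{single} fixed function may be tested against the moving sequence $\mu_n$ via the portmanteau inequality.
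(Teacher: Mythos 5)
The paper offers no proof of this Corollary to compare against: it is recalled verbatim from the literature (Corollary 4.1 of \cite{Feinberg}) in the Appendix, with the citation standing in for the argument. Judged on its own merits, your proof is correct and complete for the statement as it is \emph{intended}, and it follows the natural route: the running infima $g_m=\inf_{n\ge m}f_n$ inherit lower semicontinuity precisely because the modulus $\delta_\varepsilon$ in the semiequicontinuity hypothesis is uniform in $n$; a single l.s.c.\ function bounded below can then be tested against the moving measures $\mu_n$ via the portmanteau inequality (your remark that constants are admissible test functions, so the total masses converge, is exactly what legitimizes this step for finite rather than probability measures); monotone convergence in $m$ and the truncation $f_n\vee(-K)$, which as you verify preserves semiequicontinuity, then finish the argument. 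Your estimate $0\le\int_S(f_n^{(K)}-f_n)\,d\mu_n\le\int_S|f_n|\mathbf{1}_{\{|f_n|\ge K\}}\,d\mu_n$ is also correct, and in fact only the negative tails $\int_S f_n^-\mathbf{1}_{\{f_n\le -K\}}\,d\mu_n$ enter it.

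One point deserves to be made sharper than your closing parenthetical hedge. As printed, the a.u.i.\ hypothesis reads $\lim_K\limsup_n\int_S f_n\mathbf{1}_{\{|f_n|\ge K\}}\,d\mu_n=0$ \emph{without} absolute value, and under this literal signed condition the conclusion is actually false, so your hedge is load-bearing rather than cosmetic. For instance, on $S=[0,1]$ with $\mu_n=\mu$ the Lebesgue measure, take $f_n\equiv 0$ for even $n$ and $f_n(x)=-n\,(1-|x|/w_n)^{+}$ with $w_n=c/n$ for odd $n$: each $f_n$ is continuous with its minimum at $0$, and the family is lower semiequicontinuous (at $s=0$ the condition is vacuous since $f_n(0)$ is the global minimum of $f_n$; at $s\neq 0$ only finitely many pits reach $s$, so a single $\delta$ works for all $n$). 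For every $K$ the signed tail integral is $0$ along even $n$ and lies in $(-c,0]$ along odd $n$, so its $\limsup_n$ equals $0$ and the printed hypothesis holds; yet $\liminf_n\int_S f_n\,d\mu_n=-c<0=\int_S\liminf_n f_n\,d\mu$. The correct hypothesis — and the one actually used in \cite{Feinberg} — carries $|f_n|$ inside the tail integral (equivalently, it suffices to control the negative tails, which is all your truncation step consumes); the version in the paper is a transcription slip. The slip is harmless for the paper's applications, since the corollary is invoked (in Proposition \ref{pr : lower semi continuity}) for indicator functions, uniformly bounded by $1$, so all tails vanish; and your proof, read with $|f_n|$ in the hypothesis, is sound.
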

\begin{Corollary}\label{cr Lebesgue faible}(Lebesgue's Theorem for weakly convergence measures, see Corollary 5.1 in \cite{Feinberg}.) Let $\mu_n$ be a sequence in $\mathcal{M}(S)$ converging weakly to $\mu$ in $\mathcal{M}(S)$. Let $f_n$ a sequence of real-valued measurable function on $S$. Assume that $f_n$ is asymptotic uniformly integrable with respect to the sequence of measures $\mu_n$, namely:
$$\lim_{K \to +\infty} \limsup_{n \to +\infty} \int_S f_n(s)\mathbf{1}_{\{|f_n(s)|\ge K \}}\mu_n(ds)=0,$$
and $\lim_{n\to +\infty}\lim_{u\to s}f_n(u)$ exists $\mu$ almost everywhere $s\in S$. Then:
$$\lim_{n\to +\infty}\int_Sf_n(s)\mu_n(ds)=\int_S\lim_{n\to +\infty}\lim_{u\to s}f_n(u)\mu(ds)=\int_S\lim_{n\to +\infty}f_n(s)\mu(ds).$$
\end{Corollary}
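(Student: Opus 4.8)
The statement is the dominated-convergence companion of the Fatou-type inequality of Corollary \ref{cr: Fatou}, so the natural strategy is to deduce it from that corollary by a two-sided argument, controlling simultaneously $\liminf_{n}\int_S f_n\,d\mu_n$ from below and $\limsup_{n}\int_S f_n\,d\mu_n$ from above. Write $h(s):=\lim_{n\to+\infty}\lim_{u\to s}f_n(u)$, which by hypothesis is defined for $\mu$-almost every $s\in S$. The aim is to show that each one-sided limit of $\int_S f_n\,d\mu_n$ equals $\int_S h\,d\mu$, whence $\int_S f_n\,d\mu_n\to\int_S h\,d\mu$, and finally to observe that $h(s)=\lim_{n}f_n(s)$ at $\mu$-a.e.\ $s$ so that the two integrals displayed in the conclusion coincide.

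First I would apply Corollary \ref{cr: Fatou} to a regularization of $(f_n)$. Since $f_n$ is merely measurable it is not directly admissible, the hypothesis of Corollary \ref{cr: Fatou} requiring lower semiequicontinuity; I would therefore replace each $f_n$ by its lower semicontinuous envelope $\underline{f}_n(s):=\liminf_{u\to s}f_n(u)\le f_n(s)$. Because $\underline{f}_n\le f_n$ pointwise, one has $\liminf_{n}\int_S \underline{f}_n\,d\mu_n\le\liminf_{n}\int_S f_n\,d\mu_n$, while the $\mu$-a.e.\ existence of the inner limit forces $\underline{f}_n(s)=\lim_{u\to s}f_n(u)$, so that $\liminf_{n}\underline{f}_n(s)=\lim_{n}\lim_{u\to s}f_n(u)=h(s)$ $\mu$-a.e. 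Feeding $(\underline{f}_n)$ into Corollary \ref{cr: Fatou} then yields $\int_S h\,d\mu\le\liminf_{n}\int_S f_n\,d\mu_n$.

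Symmetrically, I would run the same reasoning on $(-f_n)$. The asymptotic uniform integrability hypothesis is invariant under $f_n\mapsto -f_n$, since it controls $\int_S|f_n|\mathbf{1}_{\{|f_n|\ge K\}}\,d\mu_n$, and the lower envelope of $-f_n$ is $-\overline{f}_n$ with $\overline{f}_n(s):=\limsup_{u\to s}f_n(u)$. The resulting inequality reads $\int_S(-h)\,d\mu\le\liminf_{n}\int_S(-f_n)\,d\mu_n=-\limsup_{n}\int_S f_n\,d\mu_n$, that is $\limsup_{n}\int_S f_n\,d\mu_n\le\int_S h\,d\mu$. Combining the two bounds closes the sandwich and gives $\lim_{n}\int_S f_n\,d\mu_n=\int_S h\,d\mu$; since at $\mu$-a.e.\ $s$ the value $\lim_{u\to s}f_n(u)$ may be taken equal to $f_n(s)$ in the outer limit, one has $h=\lim_{n}f_n$ $\mu$-a.e.\ and the two right-hand integrals in the statement agree.

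The delicate point, and the step I expect to be the main obstacle, is the verification that the regularized sequences $(\underline{f}_n)$ and $(-\overline{f}_n)$ genuinely meet the lower semiequicontinuity and asymptotic uniform integrability requirements of Corollary \ref{cr: Fatou}. Lower semicontinuity of each individual envelope is automatic, but the \emph{uniform-in-$n$} modulus demanded by lower semiequicontinuity is not, and it is precisely here that the assumed existence of the iterated limit must be exploited: off a $\mu$-null set it forces the envelopes $\underline{f}_n$ and $\overline{f}_n$ to collapse to the same continuous limit function $\lim_{u\to s}f_n(u)$, so that no oscillation survives in the double limit and the two Fatou inequalities carry matching integrands. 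Carrying out this regularization carefully, and checking that the truncation $\mathbf{1}_{\{|f_n|\ge K\}}$ preserves the asymptotic uniform integrability after passing to the envelopes, is the technical heart of the argument; the remaining manipulations are the routine sandwiching described above.
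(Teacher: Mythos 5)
The paper offers no proof of this statement: it is imported verbatim, with citation, from Corollary 5.1 of \cite{Feinberg}, so the only proof to compare yours with is the one in that reference. There, the hypothesis is the existence of the \emph{joint} limit $\lim_{n\to\infty,\,u\to s}f_n(u)$ for $\mu$-a.e.\ $s$, and the proof applies the Fatou lemma ``in its classical form'' for varying measures, namely $\int_S \liminf_{n\to\infty,\,u\to s}f_n(u)\,\mu(ds)\le\liminf_{n}\int_S f_n\,d\mu_n$ (which needs no equicontinuity whatsoever), to $f_n$ and $-f_n$; the joint-limit hypothesis makes the joint liminf and limsup collapse to the same function $\mu$-a.e., and the sandwich closes immediately, with no envelopes.

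Your plan has a genuine gap at exactly the step you defer to ``the technical heart'': the uniform-in-$n$ semicontinuity required by Corollary \ref{cr: Fatou} cannot be extracted from the iterated-limit hypothesis as the corollary is (mis)stated, and in fact the iterated-limit statement is \emph{false}, so no argument using only that hypothesis can succeed. Concretely, take $S=[0,1]$, $\mu_n=\delta_{1/n}\rightharpoonup\mu=\delta_0$, and $f_n(u)=\max(0,\,1-|nu-1|)$: each $f_n$ is continuous (so your envelopes change nothing), $0\le f_n\le 1$ gives asymptotic uniform integrability trivially, and $\lim_{n\to\infty}\lim_{u\to s}f_n(u)=0$ for every $s$; yet $\int_S f_n\,d\mu_n=f_n(1/n)=1\not\to 0=\int_S 0\,d\mu$. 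In this example the lower half of your sandwich is fine ($f_n\ge 0$ is lower semiequicontinuous), but the upper half breaks: $(-f_n)$ fails lower semiequicontinuity at $s=0$ precisely because the bump rides in from $u=1/n$, and no passage to upper envelopes repairs a failure that is uniform-in-$n$ by nature. What rescues the theorem is the joint limit of \cite{Feinberg}, which simultaneously controls $n$ and $u$ (and in particular the diagonal values $f_n(s)$, which your envelope identity also needs: $\underline{f}_n(s)=\min(f_n(s),\liminf_{u\to s}f_n(u))$ need not equal $\lim_{u\to s}f_n(u)$); your argument nowhere invokes joint control, so it cannot close. Two subsidiary points you flag but do not resolve are also real: lower semiequicontinuity in Corollary \ref{cr: Fatou} is demanded at \emph{every} $s\in S$ while your hypothesis acts only $\mu$-a.e., and asymptotic uniform integrability does not pass to the envelopes pointwise, since $|\underline{f}_n(s)|\le|f_n(s)|$ fails when the envelope absorbs large nearby values. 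The clean route is to prove (or quote) the joint-liminf Fatou inequality above and apply it to $\pm f_n$.
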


\subsection{Proof of Lemma \ref{lem:modules-cont}}
\begin{proof}

\label{sk: ineg modulus inequality}

Fix $n\in \N^\ast$. 

Let $m\geq 1$ an integer.

From the definition of $P_{t_n}^{(x_n,i_n),l_n}$ (condition $(\mathcal{S})-i)$) we have $\text{for all }s\in [0,t_n]\;\;P_{t_n}^{(x_n,i_n),l_n} - {\rm a. s}$:
\begin{align}
\label{eq:entrezeroettn}
|x(s) - x_n|^{2m} = 0 &\;\;\text{ ; }\;\;|l(s) - l_n|^{2m} =0\;\;\text{ ; }\;\; |\ell_s|^{2m}= 0.
\end{align}

Define $\tau_K=\inf(s\geq t_n, |x(s)|\geq K)$ for $K\geq 1$.
This localizing sequence of stopping times is such that
$|x(s\wedge \tau_K)|\leq K$ and $\lim_{K\rightarrow +\infty}|x(s\wedge \tau_K)| = |x(s)|$ for all $s\in [0,T]$ (by continuity of $x$, which is guaranteed by the fact that we are working on $(\Phi, \B(\Phi))$).

From \eqref{eq:equation-freidlin-sheu} we are in position to apply It\^o's formula to $y\mapsto y^{2m}$ and $(x(s\wedge \tau_K))_{s\in [0,T]}$. Note that since the process $s\mapsto \ell(s)$ is increasing it is of bounded variation and its bracket is simply the null process. 
Using crucially condition $(\mathcal{S})-ii)$, we have for all $s\in [t_n, T]$:
\begin{align*}
\displaystyle
x(s\wedge \tau_K)^{2m} &= x_n^{2m} + 
2m\displaystyle\int_{t_n}^{s\wedge \tau_K}x(u\wedge \tau_K)|x(u\wedge \tau_K)|^{2m - 2}b_{i(u)}(t_n,x(u\wedge \tau_K),l_n)du\\
&\hspace{0,5 cm}+ m(2m-1)\int_{t_n}^{s\wedge \tau_K} |x(u)|^{2(m-1)}\sigma^2_{i(u)}(t_n,x(u\wedge \tau_K),l_n)du\\
&\hspace{0,5 cm}
+2m\int_{t_n}^{s\wedge \tau_K}|x(u\wedge \tau_K)|^{2m - 2}\sigma_{i(u)}(t_n,x(u\wedge \tau_K),l_n)x(u\wedge \tau_K)dW^{(t_n,l_n)}_u.
\end{align*}
The stochastic integral in the r.h.s. is a martingale and the stopping time $s\wedge \tau_K$ is bounded by $T$. 
We are in position to apply the optional stopping theorem for martingales, we obtain
\[
\mathbb{E}^{P_{t_n}^{(x_n,i_n),l_n}}\croc{\int_{t_n}^{s\wedge \tau_K}|x(u\wedge \tau_K)|^{2m - 2}\sigma_{i(u)}(t_n,x(u\wedge \tau_K),l_n)x(u\wedge \tau_K)dW^{(t_n,l_n)}_u}= 0.
\]

Observe that $|x|x|^{2m-2}y| = |x|^{2m-1}|y|\leq \max(x^{2m}, y^{2m}) \leq 2(x^{2m} + y^{2m})$ and
$||x|^{2m-2}y^2|\leq 2(x^{2m} + y^{2m})$.

Hence, we deduce that there exists a constant $C_m$ (independent of $n$) such that:
\begin{align*}
\displaystyle
\mathbb{E}^{P_{t_n}^{(x_n,i_n),l_n}}\croc{|x(s\wedge \tau_K)|^{2m}} &\leq x^{2m}_n + 
C_m\displaystyle\int_{t_n}^{s\wedge \tau_K}|b|^{2m}du\,+\, C_m\int_{t_n}^{s\wedge \tau_K} |\sigma|^{2m}du\\
&\hspace{0,5 cm}+C_m\int_{t_n}^{s\wedge \tau_K} \mathbb{E}^{P_{t_n}^{(x_n,i_n),l_n}}\croc{|x(u\wedge \tau_K)|^{2m}}du.
\end{align*}
Applying Gr\"onwall's inequality and using that $(s\wedge \tau_K -t_n) \leq T$ yields
\begin{align}
\label{eq:result-gronwall}
\displaystyle
\mathbb{E}^{P_{t_n}^{(x_n,i_n),l_n}}\croc{|x(s\wedge \tau_K)|^{2m}}  &\leq {\rm e}^{C_m T}\pare{x^{2m}_n + 
C_m T\pare{|b|^{2m} + |\sigma|^{2m}}}.
\end{align}
By application of Fatou's lemma, we deduce finally from \eqref{eq:entrezeroettn} and \eqref{eq:result-gronwall} that
\begin{align*}
\mathbb{E}^{P_{t_n}^{(x_n,i_n),l_n}}\croc{|x(s)|^{2m}} &
\leq \liminf\limits_{K\rightarrow +\infty}\mathbb{E}^{P_{t_n}^{(x_n,i_n),l_n}}\croc{|x(s\wedge \tau_K)|^{2m}} \\
&\leq {\rm e}^{C_m T}\pare{x^{2m}_n + 
C_m T\pare{|b|^{2m} + |\sigma|^{2m}}},
\end{align*}
holding for all $s\in [0,T]$.

From the equality \eqref{eq:equation-freidlin-sheu}, it not difficult to deduce that there exists a constant $C_m$, depending only on the data $(T,|b|,|\sigma|)$, such that
\begin{align}
\label{eq:control-moments-sup-ext}
\sup_{n\geq 1}\sup_{s\in [0,T]}\mathbb{E}^{P_{t_n}^{(x_n,i_n),l_n}}\croc{|x(s)|^{2m} + |l(s)|^{2m} + |\ell_s|^{2m}} \leq C_m\pare{1+x^{2m}_n + l_n^{2m}}.
\end{align}
We have to work a little bit more in order to get the supreme inside the expectation.

Let us introduce $(y(s))_{s\in [t_n,T]}$ defined for all $s\in [t_n,T]$ by
\begin{equation}
\label{def:y}
y(s) := x(s) - \ell_s = x_n + \int_{t_n}^s  \sigma_{i(u)}(t_n,x(u),l_n)dW^{(t_n,l_n)}_u + \int_{t_n}^s b_{i(u)}(t_n,x(u),l_n)du 
\end{equation}
(the last equality comes from \eqref{eq:equation-freidlin-sheu}).

The process $(y(s))_{t_n\leq s\leq T}$ is a classical It\^o process. Define $\xi_K=\inf(s\geq t_n, |y(s)|\geq K)$ for $K\geq 1$. 

Following the same kind of computations as in \cite{Karatzas Book} p.390 solution of Problem 3.15 (in the context of classical SDE), we have from \eqref{def:y}:
\begin{align*}
|y(s\wedge \xi_K)|^{2m} &\leq C_m\pare{x_n^{2m} + \Big |\int_{t_n}^{s\wedge \xi_k} b_{i(u)}(t_n,x(u\wedge \xi_K),l_n)du \Big |^{2m}}\\
&\hspace{2 cm}+C_m \Big |\int_{t_n}^{s\wedge \xi_K}  \sigma_{i(u)}(t_n,x(u\wedge \xi_K),l_n)dW^{(t_n,l_n)}_u\Big |^{2m}
\end{align*}
and H\"older's inequality provides
\begin{align*}
|y(s\wedge \xi_K)|^{2m} &\leq C_m\pare{x_n^{2m} + ({s\wedge \xi_K} - t_n)^{2m-1}\int_{t_n}^{s\wedge \xi_K} \big |b_{i(u)}(t_n,x(u\wedge \xi_K),l_n)\Big |^{2m}du }\\
&\hspace{2 cm}+C_m \sup_{r\in [t_n,s]}\Big |\int_{t_n}^{r\wedge \xi_K}  \sigma_{i(u)}(t_n,x(u\wedge \xi_K),l_n)dW^{(t_n,l_n)}_u\Big |^{2m}.
\end{align*}
Combining Burkh\"older-Davies-Gundy and H\^older inequalities gives
\begin{align*}
&\mathbb{E}^{P_{t_n}^{(x_n,i_n),l_n}}\croc{\sup_{r\in [t_n,s\wedge \xi_K]}\Big |\int_{t_n}^{r\wedge \xi_K}  \sigma_{i(u)}(t_n,x(u\wedge \xi_K),l_n)dW^{(t_n,l_n)}_u\Big |^{2m}}\\
&\hspace{0,5 cm}\leq C_m \mathbb{E}^{P_{t_n}^{(x_n,i_n),l_n}}\croc{\Big |\int_{t_n}^{s\wedge \xi_K}  \sigma_{i(u)}(t_n,x(u\wedge \xi_K),l_n)\Big |^{2}du\Big |^m}\\
&\hspace{0,5 cm}\leq C_m ({s\wedge \xi_K} - t_n)^{2m-1}\mathbb{E}^{P_{t_n}^{(x_n,i_n),l_n}}\croc{\Big |\int_{t_n}^{s\wedge \xi_K}  \sigma^{2m}_{i(u)}(t_n,x(u\wedge \xi_K),l_n)du\Big |^{2m}}.
\end{align*}
By using the bounded character of coefficients $(b,\sigma)$, we retrieve directly
\begin{align*}
\mathbb{E}^{P_{t_n}^{(x_n,i_n),l_n}}\croc{\sup_{r\in [t_n,s\wedge \xi_K]}|y(r\wedge \xi_K)|^{2m}} \leq C_m\pare{x_n^{2m} + (|b|^{2m} + |\sigma|^{2m})\pare{s - t_n}^{2m}}.
\end{align*}
By choosing $s=T$ and letting $K$ tend to infinity, Fatou's lemma ensures
\begin{align}
\label{ineq:supy}
\mathbb{E}^{P_{t_n}^{(x_n,i_n),l_n}}\croc{\sup_{r\in [t_n,T]}|y(r)|^{2m}} \leq C_m\pare{x_n^{2m} + (|b|^{2m} + |\sigma|^{2m})\pare{T - t_n}^{2m}}.
\end{align}

Now from the definition of $(y(s))_{s\in [0,T]}$ combined with the inequality $(a+b)^{2m}\leq 2m(a^{2m} + b^{2m})$, and remembering that $s\mapsto \ell_s$ is an increasing process, we see that
\begin{align*}
&\mathbb{E}^{P_{t_n}^{(x_n,i_n),l_n}}\croc{\sup_{r\in [t_n,T]}|x(r)|^{2m}}\\ 
&\hspace{0,5 cm}\leq 2m\pare{\mathbb{E}^{P_{t_n}^{(x_n,i_n),l_n}}\croc{\sup_{r\in [t_n,T]}|y(r)|^{2m}} + \mathbb{E}^{P_{t_n}^{(x_n,i_n),l_n}}\croc{|\ell_T|^{2m}}}.
\end{align*}
Hence from \eqref{ineq:supy} and \eqref{eq:control-moments-sup-ext}, we prove that there exists a constant $C_m$ depending only on the data $(T,|b|,|\sigma|)$, such that
\begin{align*}
\sup_{n\geq 0}\,\mathbb{E}^{P_{t_n}^{(x_n,i_n),l_n}}\croc{|x|_{(0,T)}^{2m} + |l|_{(0,T)}^{2m} + |\ell|_{(0,T)}^{2m}}\leq 
C_m\pare{1 + x_n^{2m} + l_n^{2m}}.
\end{align*}
\medskip

Let us now turn to the control of the moduli of continuity.

Note that $(y(s))_{s\in [t_n,T]}$ is an It\^o process. Our assumptions on the coefficients allow to apply the results of \cite{Fischer} Theorem 1 (with $p=2$) to our context. We prove easily that 
\begin{equation}
\label{eq:modulus-y}
\forall \theta\in (0,1),~~\sup_{n\ge 0}E^{(x_n,i_n),l_n}_{t_n}\croc{\omega\pare{y,\theta}^2 }~~\leq~~C\theta \ln\pare{\displaystyle{2T}/{\theta}}.
\end{equation}
Our problem is now to pass from process $y$ to the process $x$ in the modulus.

Note that standard methods (meaning for e.g. as in \cite{Karatzas Book} p.390 solution of Problem 3.15) cannot be directly applied here. In particular, we do not know whether $s\mapsto \ell_s$ is the local time of an It\^o process or not: in fact we have strong insights that this is not the case due to the fact that $({X}(t) = (x(t), i(t)))_{t\in [0,T]}$ is not adapted to a Brownian filtration.

Nonetheless, $x$ takes only positive values and $\ell$ is an increasing process starting form $0$ at time $t_n$ with corresponding measure carried -- almost surely under $P^{(x_n,i_n),l_n}_{t_n}$ -- only by $\{u\in [t_n, T]: x(u)= 0\}$. Hence, we deduce from \eqref{eq:equation-freidlin-sheu} that $\pare{(x(s))_{s\in [t_n, T]}, (\ell_s)_{s\in [t_n, T]}}$ is solution of the Skorokhod problem associated to $(y(s))_{s\in [t_n,T]}$ in the sense of Lemma 2.1 p.239 in \cite{Revuz-Yor}. In particular, we have the representation
\begin{equation}
\label{eq:skorokhod-representation}
\ell_s = \sup_{u\in [t_n,s]}\pare{-y(u)\vee 0}\;\;\;\;\;\;s\in [t_n, T]
\end{equation}
that holds almost surely under $P^{(x_n,i_n),l_n}_{t_n}$.

Let $s',s\in [t_n, T]$ with $s'<s$ and $s-s'\leq \theta$. Assume that $\ell_{s'}< \ell_{s}$. From the fact that $s\mapsto -y_s\vee 0$ is continuous on $[t_n, T]$, the representation \eqref{eq:skorokhod-representation} implies that there exists $\xi'\in [s',s]$ and $\xi\in [s',s]$ with $\xi'\leq \xi$ such that
\begin{align*}
0\leq  \ell_s - \ell_{s'}&= \sup_{u\in [t_n,s]}\pare{-y(u)\vee 0} - \sup_{u\in [t_n,s']}\pare{-y(u)\vee 0} = (-y(\xi)\vee 0) - (-y(\xi')\vee 0)\\
&\leq y(\xi') - y(\xi)\leq w(y,\theta).
\end{align*}
from which we deduce that
\[
\omega\pare{\ell,\theta} \leq
\omega\pare{y,\theta}.
\]
By sub-linearity of the modulus of continuity
\[
\omega\pare{x,\theta} = \omega\pare{y+\ell,\theta}\leq \omega\pare{y,\theta} + \omega\pare{\ell,\theta}\leq 2\omega\pare{y,\theta}.
\]
Inequality \eqref{eq:modulus-continuity-x-l} is deduced directly as a consequence of \eqref{eq:modulus-y}. The inequality \ref{eq:modulus-continuity-X} follows then by definition of the distance $d^{\mathcal J}$ (using the fact that $i(s)\neq i(t)$ ($s<t$) ensures that there exists $u\in [s,t]$ such that $x(u)=0$).


\end{proof} 

\medskip

\end{document}